\documentclass[12pt,reqno]{amsart}

\usepackage{amsmath,amssymb,ifthen}
\usepackage{fullpage}
\usepackage{graphicx,psfrag,subfigure}
\usepackage{color}
\usepackage{moreverb}
\usepackage{amsthm}
\usepackage{mathrsfs}
\usepackage{hhline}
\usepackage{bbm}
\usepackage[english]{babel}
\usepackage{tikz}
\usepackage[utf8]{inputenc}
\usepackage{amsfonts}
\usepackage{enumerate}

\def\MM{\mathcal M}

\def\D{D}
\def\T{\mathbb{T}}
\def\C{{\mathbb C}}
\def\R{{\mathbb R}}

\def\N{{\mathbb N}}

\def\OO{{\mathcal O}}
\def\PP{{\mathcal P}}

\def\RR{\mathcal{R}} 
\def\SS{{\mathcal S}}
\def\TT{{\mathcal T}}
\def\XX{{\mathcal X}}

\def\diam{{\rm diam}}
\def\q#1{q_{\rm #1}}

\def\ro#1{\rho_{\rm #1}}

\def\const#1{C_{\rm #1}}

\def\norm#1#2{\|#1\|_{#2}}
\def\seminorm#1#2{\vert #1\vert_{#2}}
\def\set#1#2{\big\{#1\,:\,#2\big\}}
\def\sprod#1#2{(#1\,;\,#2)}
\def\dual#1#2{\langle#1\,;\,#2\rangle}

\def\refine{{\tt refine}}
\def\supp{{\rm supp}}

\def\dist{{\rm dist}}


\def\coarse{\bullet}
\def\fine{\circ}


\numberwithin{equation}{section}
\numberwithin{figure}{section}
\newtheorem{theorem}{Theorem}[section]
\newtheorem{proposition}[theorem]{Proposition}
\newtheorem{lemma}[theorem]{Lemma}
\newtheorem{corollary}[theorem]{Corollary}
\newtheorem{algorithm}[theorem]{Algorithm}

\newtheorem{remark}[theorem]{Remark}

\def\subsection#1
{
 \bigskip

 \refstepcounter{subsection}
 {\noindent\bf\arabic{section}.\arabic{subsection}.~#1.~}
}
\renewcommand{\subsection}[1]{\refstepcounter{subsection}\medskip{\bf\thesubsection.~#1.}}


\newcommand*\patchAmsMathEnvironmentForLineno[1]{%
  \expandafter\let\csname old#1\expandafter\endcsname\csname #1\endcsname
  \expandafter\let\csname oldend#1\expandafter\endcsname\csname end#1\endcsname
  \renewenvironment{#1}%
     {\linenomath\csname old#1\endcsname}%
     {\csname oldend#1\endcsname\endlinenomath}}%
\newcommand*\patchBothAmsMathEnvironmentsForLineno[1]{%
  \patchAmsMathEnvironmentForLineno{#1}%
  \patchAmsMathEnvironmentForLineno{#1*}}%
\AtBeginDocument{%
\patchBothAmsMathEnvironmentsForLineno{equation}%
\patchBothAmsMathEnvironmentsForLineno{align}%
\patchBothAmsMathEnvironmentsForLineno{flalign}%
\patchBothAmsMathEnvironmentsForLineno{alignat}%
\patchBothAmsMathEnvironmentsForLineno{gather}%
\patchBothAmsMathEnvironmentsForLineno{multline}%
}
\usepackage[mathlines]{lineno}


\title{Adaptive BEM for elliptic PDE systems, \\
Part I: Abstract framework \\
for weakly-singular integral equations}

\author{Gregor Gantner}
\address{Korteweg-de Vries Institute for Mathematics\\
University of Amsterdam, P.O. Box 94248, 1090 GE Amsterdam, The Netherlands}
\email{G.Gantner@uva.nl}

\author{Dirk Praetorius}
\address{Institute for Analysis and Scientific Computing\\
TU Wien, Wiedner Hauptstra\ss{}e 8-10, A-1040 Wien, Austria}
\email{Dirk.Praetorius@asc.tuwien.ac.at}

\keywords{boundary element method, a posteriori error estimates, 
adaptive algorithm, optimal convergence, inverse estimates}
\subjclass[2010]{65N15, 65N30, 65N38, 65N50}
\begin{document}

\begin{abstract}
In the present work, we consider weakly-singular integral equations arising from linear second-order strongly-elliptic PDE systems with constant coefficients, including, e.g., linear elasticity.  
We introduce a general framework for optimal convergence of adaptive Galerkin BEM.  
We identify certain abstract properties for the underlying meshes, the corresponding mesh-refinement strategy, and the ansatz spaces that guarantee convergence at optimal algebraic rate of an  adaptive algorithm driven by the weighted-residual error.
These properties are satisfied, e.g.,  for discontinuous piecewise polynomials on simplicial meshes as well as certain ansatz spaces used for isogeometric analysis.
Technical contributions include local inverse estimates for the (non-local) boundary integral operators associated to the PDE system.
\end{abstract}

\date{\today}
\maketitle

\section{Introduction}

\subsection{State of the art}
For the Laplace model problem, adaptive boundary element methods (BEM) using (dis)continuous piecewise polynomials on triangulations have been intensively studied in the literature. 
In particular, optimal convergence of mesh-refining adaptive algorithms has been proved for polyhedral boundaries~\cite{part1,part2,fkmp} as well as smooth boundaries~\cite{gantumur}. The  work~\cite{invest} allows to transfer these results to piecewise smooth boundaries; see also the discussion in the review article~\cite{axioms}.
In \cite{helmholtzbem}, these results have been generalized  to the Helmholtz problem. 
In recent years, we have also shown optimal convergence of adaptive isogeometric BEM (IGABEM) using one-dimensional splines for the 2D Laplace problem \cite{resigaconv,hypiga}. However, the important case of 3D IGABEM remained open.
Moreover, a generalization to other PDE operators is highly non-trivial (see~\eqref{eq:one invest} below), but especially linear elasticity is of great interest in the context of isogeometric analysis. 

In \cite{igafem}, we have considered isogeometric finite element methods (IGAFEM). 
We have derived an abstract framework which guarantees that, first, the classical residual FEM error estimator is reliable, and second, the related adaptive algorithm yields optimal convergence; see \cite[Section~2 and~4]{igafem}.
We then showed that, besides standard FEM with piecewise polynomials, this abstract framework covers IGAFEM with hierarchical splines (see~\cite[Section~3 and~5]{igafem}) as well as IGAFEM with analysis suitable T-splines (see the recent work~\cite{tigafem}).

The aim of the present work is to develop such an abstract framework also for BEM, which is mathematically much more demanding than FEM.
In ongoing research~\cite{riga}, we aim to show that this framework covers, besides standard discretizations with piecewise polynomials, also IGABEM with hierarchical splines resp.\ T-splines.

To this end, the present work focusses on weakly-singular integral equations.
For a given Lipschitz domain $\Omega\subseteq\R^d$ with compact boundary $\Gamma:=\partial\Omega$ and right-hand side $f:\Gamma\to\C$, we consider 
\begin{align}
({\mathfrak{V}}\phi)(x):=\int_{\Gamma} G(x-y) \phi(y) \,dy = f(x)\quad\text{for almost all }x\in\Gamma.
\end{align}
Here, the fundamental solution $G$ stems from a strongly-elliptic PDE operator
\begin{align}\label{eq:PDEs}
\begin{split}
\mathfrak{P}u:=
-\sum_{i=1}^{d} \sum_{i'=1}^{d}\partial_i (A_{ii'}\partial_{i'} u)+\sum_{i=1}^{d} b_i\partial_i u +c u,
\end{split}
\end{align}
where the coefficients $A_{ii'}=\overline{A_{i'i}}^\top,b_i,c\in\C^{D\times D}$ are constant for some fixed dimension $D\ge 1$.


\subsection{Outline \& Contributions}
%
In Section~\ref{sec:preliminaries}, we fix some general notation, recall Sobolev spaces on the boundary, and precisely state the considered problem.
Section~\ref{sec:abstract setting bem} can be paraphrased as follows: 
We formulate an adaptive algorithm (Algorithm~\ref{alg:bem algorithm})  
 of the form 
\begin{align}
\boxed{\rm~SOLVE~} 
\longrightarrow \boxed{\rm~ESTIMATE~}
\longrightarrow \boxed{\rm~MARK~}
\longrightarrow \boxed{\rm~REFINE~}
\end{align}
driven by some weighted-residual {\sl a~posteriori} error estimator (see~\eqref{intro:rel} below) in the frame of conforming Galerkin BEM. 
The algorithm particularly generates meshes $\TT_\ell$, BEM solutions $\Phi_\ell$ in associated nested ansatz spaces $\XX_\ell\subseteq\XX_{\ell+1}\subset L^2(\Gamma)^D\subset H^{-1/2}(\Gamma)^D$, and error estimators $\eta_\ell$ for all $\ell\in\N_0$.
We formulate five assumptions~\eqref{M:patch bem}--\eqref{M:semi bem} on the underlying meshes (Section~\ref{subsec:boundary discrete bem}), five assumptions~\eqref{R:sons bem}--\eqref{R:overlay bem} on the mesh-refinement (Section~\ref{subsec:general refinement bem}), and six assumptions~\eqref{S:inverse bem}--\eqref{S:stab bem} on the 
BEM spaces (Section~\ref{subsec:ansatz}).
First, these assumptions are sufficient to guarantee that the {\sl a posteriori} error estimator $\eta_\ell$ associated with the BEM solution $\Phi_\ell$ is 
reliable, i.e., there exists a constant
$\const{rel}>0$ such that 
\begin{align}\label{intro:rel}
 \const{rel}^{-1} \norm{\phi-\Phi_\ell}{H^{-1/2}(\Gamma)} 
 \le \eta_\ell 
 :=\norm{h_\ell^{1/2} \nabla_\Gamma (f- \mathfrak{V}\Phi_\ell)}{L^2(\Gamma)}
 \quad \text{for all } \ell \in \N_0,
\end{align} 
where $h_\ell\in L^\infty(\Gamma)$ denotes the local mesh-size function and $\nabla_\Gamma$ is the surface gradient.
Second, Theorem~\ref{thm:abstract bem} states that Algorithm~\ref{alg:bem algorithm} leads to linear convergence at optimal algebraic rate with respect to the number of mesh elements.
In Theorem~\ref{thm:concrete main}, we briefly note that the introduced conditions have already been implicitly proved for standard discretizations with piecewise polynomials on conforming triangulations. 
Moreover, we mention expected applications to adaptive IGABEM on quadrilateral meshes in Remark~\ref{rem:concrete main}.

Section~\ref{sec:proof abstract bem} is devoted to the proof of Theorem~\ref{thm:abstract bem}.
To prove reliability~\eqref{intro:rel}, we use a localization argument (Proposition~\ref{prop:faermann}), which generalizes earlier works~\cite{faer2,faer3} for standard discretizations. 
More precisely, we prove that
\begin{align}
\norm{v}{H^{1/2}(\Gamma)}^2\le C_{\rm split}\sum_{T\in\TT_\ell}\sum_{T'\in \Pi_\ell(T)}|v|_{H^{1/2}(T\cup T')}^2 
\end{align}
for all $v\in H^{1/2}(\Gamma)^D$ that are $L^2$-orthogonal onto the ansatz space $\XX_\ell$ corresponding to some mesh $\TT_\ell$, where $\const{split}>0$ is independent of $v$ 
and $\Pi_\ell(T)$ denotes the patch of $T\in\TT_\ell$.
In Remark~\ref{rem:faermann}, we note that one obtains at least plain convergence $\lim_{\ell\to\infty}\norm{\phi-\Phi_\ell}{H^{-1/2}(\Gamma)}=0$ if Algorithm~\ref{alg:bem algorithm} is steered by the so-called \textit{Faermann estimator}
\begin{align*}
 \widetilde C_{\rm rel}^{-1} \norm{\phi-\Phi_\ell}{H^{-1/2}(\Gamma)}
 \le \digamma_{\hspace{-1mm} \ell}
 := \Big(\sum_{T\in\TT_\ell}\sum_{T'\in \Pi_\ell(T)} \!\! |f-\mathfrak{V}\Phi_\ell|_{H^{1/2}(T\cup T')}^2\Big)^{1/2}
 \!\!\le \widetilde C_{\rm eff}^{-1} \norm{\phi-\Phi_\ell}{H^{-1/2}(\Gamma)},\hspace*{-2mm}
\end{align*}
which is reliable and efficient. 
To prove linear convergence at optimal rate for the weighted-residual estimator~\eqref{intro:rel}, we show that the assumptions of Section~\ref{sec:abstract setting bem} imply the \textit{axioms of adaptivity}~\cite{axioms}. 
The latter are properties for  abstract mesh-refinements and abstract error estimators, which automatically yield the desired convergence result.
In contrast to \cite{fkmp,part1} which (implicitly) verify the axioms of adaptivity only for the Laplace problem, our analysis allows for general PDE operators~\eqref{eq:PDEs}.
The crucial step is the generalization (Proposition~\ref{prop:invest for V}) of the non-trivial \emph{local} inverse inequality for the \emph{non-local} boundary integral operator $\mathfrak{V}$: With the help of a Caccioppoli-type inequality (Lemma~\ref{lem:cacc}), we prove that there exists a constant $\const{inv} > 0$ such that
\begin{align}\label{eq:one invest}
\norm{h_\ell^{1/2}\nabla_\Gamma \mathfrak{V}\psi}{L^2(\Gamma)}\le \const{inv} \big(\norm{\psi}{H^{-1/2}(\Gamma)}+\norm{h_\ell^{1/2}\psi}{L^2(\Gamma)}\big)\quad\text{for all }\psi\in L^2(\Gamma)^D;
\end{align}
see~\cite{invest} for standard BEM discretizations of the Laplacian. Similar estimates hold also for the other boundary integral operators related to~\eqref{eq:PDEs}, namely the double-layer integral operator $\mathfrak{K}$, its adjoint $\mathfrak{K}'$, and the hypersingular integral operator $\mathfrak{W}$. These are stated and proved in Appendix~\ref{sec:proof general invest}; again we refer to~\cite{invest} for standard BEM discretizations of the Laplacian.

While the present work focusses on the numerical analysis aspects only, we refer to the literature (see, e.g., \cite{cms,gme,resigabem,diss,helmholtzbem}) for numerical experiments for the Laplace problem, the Helmholtz problem, and linear elasticity.

\section{Preliminaries}
\label{sec:preliminaries}
In this section, we fix some general notation, recall Sobolev spaces on the boundary, and precisely state the considered problem.
Throughout the work, 
let $\Omega\subset\R^d$ for $d\ge2$ be a bounded Lipschitz domain as in \cite[Definition~3.28]{mclean} and $\Gamma:= \partial\Omega$ its boundary.

\subsection{General notation}
\label{sec:general notation}
Throughout and without any ambiguity, $|\cdot|$ denotes the absolute value of scalars, the Euclidean norm of vectors in $\R^n$, as well as  the $d$-dimensional measure of a set in $\R^n$. 
Let $B_{\varepsilon}(x):=\set{y\in\R^n}{|x-y|<\varepsilon}$ denote the open ball around $x$ with radius $\varepsilon>0$. 
For $\emptyset\neq\omega_1,\omega_2\subseteq\R^n$, let $B_{\varepsilon}(\omega_1):=\bigcup_{x\in\omega_1} B_\varepsilon(x)$.
Moreover, let $\diam(\omega_1):=\sup\set{|x-y|}{x,y\in\omega_1}$, and 
$\dist(\omega_1,\omega_2):=\inf\set{|x-y|}{x\in\omega_1,y\in\omega_2}$.
We write $A\lesssim B$ to abbreviate $A \le CB$ with some generic constant $C > 0$, which is clear from the context. 
Moreover, $A \simeq  B$ abbreviates $A\lesssim B \lesssim A$. Throughout, mesh-related quantities have the same index, e.g., $\XX_\bullet$ is the ansatz space corresponding to the  mesh  $\TT_\bullet$. 
The analogous notation is used for meshes $\TT_\circ$, $\TT_\star$, $\TT_{\ell}$ etc.

\subsection{Sobolev spaces}\label{subsec:sobolev}
For $\sigma\in[0,1]$, we define the Hilbert spaces $H^{\pm\sigma}(\Gamma)$ as in \cite[page~99]{mclean} by use of Bessel potentials on $\R^{d-1}$ and liftings via bi-Lipschitz mappings\footnote{For $\widehat\omega\subseteq \R^{d-1}$ and $\omega\subseteq \R^d$, a mapping $\gamma:\widehat\omega\to\omega$ is bi-Lipschitz if it is bijective and $\gamma$ as well as its inverse $\gamma^{-1}$ are Lipschitz continuous.} that describe $\Gamma$.
For $\sigma=0$, it holds that $H^0(\Gamma)=L^2(\Gamma)$ with equivalent norms.
We thus may define $\norm{\cdot}{H^0(\Gamma)}:=\norm{\cdot}{L^2(\Gamma)}$.

For $\sigma\in(0,1]$,  any measurable subset $\omega\subseteq\Gamma$, and all $v\in H^\sigma(\Gamma)$, we define the associated   Sobolev--Slobodeckij norm
\begin{align}\label{eq:SS-norm}
 \norm{v}{H^{\sigma}(\omega)}^2
 := \norm{v}{L^2(\omega)}^2
 + |v|_{H^{\sigma}(\omega)}^2\text{ with }
 |v|_{H^{\sigma}(\omega)}^2 :=\begin{cases} \int_\omega\int_\omega\frac{|v(x)-v(y)|^2}{|x-y|^{d-1+2\sigma}}\,dx dy&\text{ if }\sigma\in(0,1),\\ \norm{\nabla_\Gamma v}{L^2(\omega)}^2&\text{ if }\sigma=1.\end{cases}
\end{align}
It is well-known that $\norm{\cdot}{H^\sigma(\Gamma)}$ provides an equivalent norm on $H^\sigma(\Gamma)$; see, e.g., \cite[Lemma~2.19]{s} and \cite[Theorem~3.30 and page 99]{mclean} for $\sigma\in(0,1)$ and
\cite[Theorem~2.28]{gme} for $\sigma=1$.
Here, $\nabla_\Gamma(\cdot)$ denotes the usual (weak) surface gradient which can be  defined for almost all $x\in\Gamma$ as follows:
Since $\Gamma$ is a Lipschitz boundary, there exist an open cover  $(O_j)_{j=1}^J$ in $\R^d$ of $\Gamma$ such that each $\omega_j:=O_j\cap \Gamma$ can be parametrized by  a bi-Lipschitz mapping
 $\gamma_{\omega_j}: \widehat\omega_j\to \omega_j$, where  $\widehat \omega_j\subset\R^{d-1}$  is an  open set.
By Rademacher's theorem, $\gamma_{\omega_j}$ is  almost everywhere differentiable. 
The corresponding Gram determinant $\det(D\gamma_{\omega_j}^\top D\gamma_{\omega_j})$ is almost everywhere positive. 
Moreover, by definition of the space $H^1(\Gamma)$, $v\in H^1(\Gamma)$ implies that $v\circ\gamma_{\omega_j}\in H^1(\widehat\omega_j)$.
With the weak derivative $\nabla (v\circ\gamma_{\omega_j})\in L^2(\widehat\omega_j)^d$, we can  hence define 
\begin{align}\label{eq:surface gradient}
(\nabla_\Gamma v )|_{\omega_j} :=\big(D\gamma_{\omega_j}(D\gamma_{\omega_j}^\top D\gamma_{\omega_j})^{-1} \nabla (v\circ\gamma_{\omega_j})\big)\circ\gamma_{\omega_j}^{-1}
\quad\text{for all }v\in H^1(\Gamma).
\end{align}
This definition does not depend on the particular choice of the open sets $(O_j)_{j=1}^J$
and the corresponding parametrizations $(\gamma_{\omega_j})_{j=1}^J$; see, e.g., \cite[Theorem~2.28]{gme}.
With \eqref{eq:surface gradient}, we immediately obtain the chain rule 
\begin{align}\label{eq:chain rule}
\nabla(v\circ\gamma_{\omega_j})=D\gamma_{\omega_j}^\top((\nabla_\Gamma v)\circ\gamma_{\omega_j}(\cdot))\quad\text{for all }v\in H^1(\Gamma).
\end{align}

For $\sigma\in(0,1]$, $H^{-\sigma}(\Gamma)$ is a realization of the dual space of $H^{\sigma}(\Gamma)$;  see \cite[Theorem~3.30 and page~99]{mclean}.
With the duality bracket $\dual{\cdot}{\cdot}$, we define an equivalent norm 
\begin{align}
\norm{\psi}{H^{-\sigma}(\Gamma)}:=\sup \set{\dual{v}{\psi}}{v\in H^\sigma(\Gamma)\wedge\norm{v}{H^\sigma(\Gamma)}=1} \quad\text{for all } \psi\in H^{-\sigma}(\Gamma).
\end{align}
Moreover, we abbreviate
\begin{align}
\sprod{v}{\psi}:=\dual{\overline v}{\psi}\quad \text{for all } v\in H^\sigma(\Gamma),\psi\in H^{-\sigma}(\Gamma).
\end{align}

\cite[page~76]{mclean} states that the inclusion $H^{\sigma_1}(\Gamma)\subseteq H^{\sigma_2}(\Gamma)$ for $-1\le \sigma_1\le\sigma_2\le 1$ is continuous and dense.
In particular, $H^{\sigma}(\Gamma)\subset L^2(\Gamma)\subset H^{-\sigma}(\Gamma)$ form a Gelfand triple in the sense of \cite[Section~2.1.2.4]{ss} for all $\sigma\in(0,1]$, where $\psi\in L^2(\Gamma)$ is interpreted as function in $H^{-\sigma}(\Gamma)$ via 
\begin{align}
\dual{v}{\psi}:=\sprod{\overline v}{\psi}_{L^2(\Gamma)}=\int_\Gamma v\,\psi \,dx \quad\text{for all }v\in H^\sigma(\Gamma),\psi\in L^2(\Gamma).
\end{align}
Here, $\sprod{\cdot}{\cdot}_{L^2(\Gamma)}$ is the usual complex scalar product on $L^2(\Gamma)$.

So far, we have only dealt with scalar-valued functions. 
For $D\ge1$, $\sigma\in[0,1]$, $v=(v_1,\dots,v_D)\in H^{\sigma}(\Gamma)^D$, we define  $\norm{v}{H^{\pm\sigma}(\Gamma)}^2:=\sum_{j=1}^D \norm{v_j}{H^{\pm\sigma}(\Gamma)}^2$. 
If $\sigma>0$, and $\omega\subseteq\Gamma$ is an arbitrary measurable set, we define $\norm{v}{H^\sigma(\omega)}$ and $\seminorm{v}{H^\sigma(\omega)}$ analogously.
With the definition
\begin{align}
\nabla_\Gamma v:=
\begin{pmatrix}
\nabla_\Gamma v_1\\
\vdots\\
\nabla_\Gamma v_D
\end{pmatrix}
\in L^2(\Gamma)^{D^2}\quad\text{for all }v\in H^1(\Gamma)^D,
\end{align}
it holds that
$\seminorm{v}{H^1(\omega)}=\norm{\nabla_\Gamma v}{L^2(\omega)}$.
Note that $H^{-\sigma}(\Gamma)^D$ with $\sigma\in(0,1]$ can be identified with the dual space of $H^{\sigma}(\Gamma)^D$, where we set
\begin{align}
\dual{v}{\psi}:=\sum_{j=1}^D\dual{v_j}{\psi_j}\quad\text{for all }v\in H^\sigma(\Gamma)^D,\psi\in H^{-\sigma}(\Gamma)^D.
\end{align}
As before, we abbreviate
\begin{align}
\sprod{v}{\psi}:=\dual{\overline v}{\psi}\quad \text{for all } v\in H^\sigma(\Gamma)^D,\psi\in H^{-\sigma}(\Gamma)^D
\end{align}
and set
\begin{align}
\dual{v}{\psi}:=\sprod{\overline v}{\psi}_{L^2(\Gamma)}=
\sum_{j=1}^D\int_\Gamma v_j \psi_j\,dx\quad\text{for all }v\in H^\sigma(\Gamma)^D,\psi\in L^2(\Gamma)^D.
\end{align}

{The spaces $H^\sigma(\Gamma)$ can also be defined as trace spaces or via interpolation, where the resulting norms are always equivalent with constants depending only on the dimension $d$ and the boundary $\Gamma$.}
More details and proofs are found, e.g., in the monographs  \cite{mclean,ss,s}.

\subsection{Continuous problem}\label{subsec:model problem bem}
We consider a general second-order linear  system of PDEs 
\begin{align}\label{eq:PDE bem}
\begin{split}
\mathfrak{P}u:=
-\sum_{i=1}^{d} \sum_{i'=1}^{d}\partial_i (A_{ii'}\partial_{i'} u)+\sum_{i=1}^{d} b_i\partial_i u +c u,
\end{split}
\end{align}
where the coefficients $A_{ii'},b_i,c\in\C^{D\times D}$ are constant for some fixed dimension $D\ge 1$.
We suppose that $A_{ii'}^\top=\overline{A_{i'i}}$.
Moreover, we  assume that $\mathfrak{P}$ is coercive on $H_0^1(\Omega)^D$, i.e., the sesquilinear form 
\begin{align}\label{eq:PDE form}
\sprod{u}{v}_{\mathfrak{P}}:=\int_\Omega \sum_{i=1}^d \sum_{i'=1}^d(A_{ii'}\partial_{i'} u)\cdot \partial_i v+\sum_{i=1}^d (b_i\partial_i u)\cdot v +(c u)\cdot v\,dx
\end{align}
is  elliptic up to some compact perturbation.
This is equivalent to \textit{strong ellipticity}
of the matrices $A_{ii'}$ in the sense of \cite[page~119]{mclean}.
Here, the standard complex scalar product on $\C^D$ is denoted by $w\cdot z=\sum_{j=1}^D \overline {w}_j z_j$.

Let $G:\R^d\setminus\{0\}\to \C^{D\times D}$ be a corresponding (matrix-valued) fundamental solution in the sense of \cite[page~198]{mclean}, i.e., a distributional solution of $\mathfrak{P}G=\delta$, where $\delta$ denotes the Dirac delta distribution.
For $\psi\in L^\infty(\Gamma)^D$, we define the \textit{single-layer operator}
as
\begin{align}\label{eq:single layer operator integral}
({\mathfrak{V}}\psi)(x):=\int_{\Gamma} G(x-y) \psi(y) \,dy\quad\text{for all }x\in\Gamma.
\end{align}
According to \cite[page 209 and 219--220]{mclean} and \cite[
Corollary~3.38]{mitrea}, 
 this operator can be extended for arbitrary $\sigma\in(-1/2,1/2$] to a bounded linear operator 
\begin{align}\label{eq:single layer operator}
\mathfrak{V}:H^{-1/2+\sigma}(\Gamma)^D\to H^{1/2+\sigma}(\Gamma)^D.
\end{align}
\cite[Theorem~7.6]{mclean} states that $\mathfrak{V}$ is always elliptic up to some compact perturbation.
We assume that it is elliptic even without perturbation, i.e., 
\begin{align}\label{eq:ellipticity bem}
\mathrm{Re}\,\sprod{\mathfrak{V}\psi}{\psi}
\ge \const{ell}\norm{\psi}{H^{-1/2}(\Gamma)}^2\quad\text{for all }\psi\in H^{-1/2}(\Gamma)^D.
\end{align}
This is particularly satisfied for the Laplace problem or for the Lam\'e problem, where the case $d=2$ requires an additional scaling of the geometry $\Omega$; see, e.g., \cite[Chapter~6]{s}.
Moreover, the sesquilinear form $\sprod{\mathfrak{V}\,\cdot}{\cdot}
$ is continuous due to \eqref{eq:single layer operator}, i.e.,  it holds with $\const{cont}:=\norm{\mathfrak{V}}{H^{-1/2}(\Gamma)^D\to H^{1/2}(\Gamma)^D}$ that 
\begin{align}\label{eq:continuity bem}
|\sprod{\mathfrak{V}\psi}{\xi}|
\le \const{cont}\norm{\psi}{H^{-1/2}(\Gamma)}\norm{\xi}{H^{-1/2}(\Gamma)} \quad\text{for all }\psi,\xi\in H^{-1/2}(\Gamma)^D.
\end{align}

Given a right-hand side $f\in H^{1}(\Gamma)^D$, 
we consider the boundary integral equation
\begin{align}\label{eq:strong}
 \mathfrak{V}\phi = f.
\end{align}
Such equations arise from (and are even equivalent to) the solution of Dirichlet problems of the form $\mathfrak{P} u=0$ in $\Omega$ with $u=g$ on $\Gamma$ for some $g\in H^{1/2}(\Gamma)^D$; see, e.g., \cite[page 226--229]{mclean} for more details.
 The Lax--Milgram lemma provides existence
and uniqueness of the solution $\phi\in H^{-1/2}(\Gamma_{})^D$ of the equivalent variational formulation of~\eqref{eq:strong}
\begin{align}\label{eq:weak}
\sprod{\mathfrak{V}\phi}{\psi}
=\sprod{f}{\psi}
 \quad\text{for all }\psi\in H^{-1/2}(\Gamma_{})^D.
\end{align}
In particular, we see that $\mathfrak{V}:H^{-1/2}(\Gamma)^D\to H^{1/2}(\Gamma)^D$ is an isomorphism.
In the Galerkin boundary element method, the test
space $H^{-1/2}(\Gamma_{})^D$ is replaced by some discrete subspace  $\XX_\bullet\subset {L^{2}(\Gamma_{})}^D\subset H^{-1/2}(\Gamma_{})^D$.
Again, the Lax--Milgram lemma guarantees existence and uniqueness of the solution
$\Phi_\bullet\in\XX_\bullet$ of the discrete variational formulation
\begin{align}\label{eq:discrete}
\sprod{\mathfrak{V}\Phi_\bullet}{\Psi_\bullet}
 = \sprod{f}{\Psi_\bullet}
 \quad\text{for all }\Psi_\bullet\in\XX_\bullet.
\end{align}
Moreover, $\Phi_\bullet$ can in fact be computed by solving a linear system of equations.
Note that \eqref{eq:single layer operator} implies that $\mathfrak{V} \Psi_\coarse\in H^1(\Gamma)^D$ for arbitrary $\Psi_\coarse\in \XX_\coarse$.
The additional regularity $f\in H^1(\Gamma)^D$ instead of $f\in H^{1/2}(\Gamma)^D$ is only needed to define the residual error estimator~\eqref{eq:eta bem} below.
For a more detailed introduction to boundary integral equations, the reader is referred to the monographs \cite{mclean,ss,s}.


\section{Axioms of adaptivity (revisited)}\label{sec:axioms revisited bem}

The aim of this section is to formulate an adaptive algorithm (Algorithm~\ref{alg:bem algorithm}) for conforming BEM discretizations of our model problem~\eqref{eq:strong}, where adaptivity is driven by the \textit{residual {\sl a~posteriori} error estimator} (see \eqref{eq:eta bem} below).
We identify the crucial properties of the underlying meshes, the mesh-refinement, as well as the boundary element spaces which ensure that the residual error estimator fits into the general framework of \cite{axioms} and which hence guarantee optimal convergence behavior of the adaptive algorithm.
We mention that we have already identified similar (but not identical) properties for the finite element method in \cite[Section~3]{igafem}.
The main result of this work is Theorem~\ref{thm:abstract bem} which is proved in Section~\ref{sec:proof abstract bem}.
\label{sec:abstract setting bem}

\subsection{Meshes}\label{subsec:boundary discrete bem}
Throughout, $\TT_\coarse$ is a \textit{mesh} of the boundary $\Gamma=\partial\Omega$ of the bounded Lipschitz domain $\Omega\subset\R^d$ in the following sense:
\begin{enumerate}[\rm (i)]
\item $\TT_\coarse$ is a finite set of compact Lipschitz domains on $\Gamma$, i.e., each element $T$ has the form $T=\gamma_T({\widehat{T}})$, where ${\widehat T}$ is a compact\footnote{A compact Lipschitz domain is the closure of a bounded Lipschitz domain.
For $d=2$, it is the finite union of compact intervals with non-empty interior.} Lipschitz domain in $\R^{d-1}$ and $\gamma_T:{ \widehat T}\to T$ is bi-Lipschitz.
\item  $\TT_\coarse$ covers $\Gamma$, i.e., $\Gamma = \bigcup_{T\in\TT_\coarse}{T}$.
\item For all $T,T'\in\TT_\coarse$ with $T\neq T'$, the intersection $T\cap T'$ has $(d-1)$-dimensional Hausdorff measure  zero.
\end{enumerate}
We suppose that there is a countably infinite set $\T$ of \textit{admissible} meshes. 
In order to ease notation, we introduce for $\TT_\coarse\in\T$ the corresponding \textit{mesh-width function} 
\begin{align}
h_\coarse\in L^\infty(\Gamma)\quad\text{with}\quad h_\coarse|_T=h_T:=|T|^{1/(d-1)}\text{ for all }T\in\TT_\coarse.
\end{align}
 For  $\omega\subseteq \Gamma$, we define the patches of order $q\in\N_0$ inductively by
\begin{align}
 \pi_\bullet^0(\omega) := \omega,
 \quad 
 \pi_\bullet^q(\omega) := \bigcup\set{T\in\TT_\bullet}{ {T}\cap \pi_\bullet^{q-1}(\omega)\neq \emptyset}.
\end{align}
The corresponding set of elements is
\begin{align}
 \Pi_\bullet^q(\omega) := \set{T\in\TT_\bullet}{ {T} \subseteq \pi_\bullet^q(\omega)},
 \quad\text{i.e.,}\quad
 \pi_\bullet^q(\omega) = \bigcup\Pi_\bullet^q(\omega).
\end{align}
If $\omega=\{z\}$ for some $z\in\Gamma$, we simply write  $\pi^q_\bullet(z):=\pi^q_\bullet(\{z\})$ and $\Pi_\bullet^q(z) := \Pi_\bullet^q(\{z\})$.
  For $\SS\subseteq\TT_\bullet$, we define $\pi_\bullet^q(\SS):=\pi_\bullet^q(\bigcup\SS)$
and $\Pi_\bullet^q(\SS):=\Pi_\bullet^q(\bigcup\SS)$. 
To abbreviate notation, the index $q=1$ is omitted, e.g., $\pi_\bullet(\omega) := \pi_\bullet^1(\omega)$ and $\Pi_\bullet(\omega) := \Pi_\bullet^1(\omega)$.

We  assume the existence of constants $\const{patch},\const{locuni},\const{shape},\const{cent},\const{semi}>0$ such that the following assumptions are satisfied for all $\TT_\bullet\in\T$:
\begin{enumerate}[(i)]
\renewcommand{\theenumi}{M\arabic{enumi}}
\bf\item\rm\label{M:patch bem}
{\bf Bounded   element patch:} 
The number of elements in a patch is uniformly bounded, i.e., 
\begin{align*}
\#\Pi_\bullet(T)\le \const{patch}\quad \text{for all } T\in\TT_\bullet.
\end{align*}
\bf\item\rm\label{M:locuni bem}
{\bf Local quasi-uniformity:}
Neighboring elements have comparable diameter, i.e.,
\begin{align*}
{\diam(T)}/{\diam(T')}\le \const{locuni} \quad\text{for all }T\in\TT_\coarse\text{ and all }T'\in\Pi_\coarse(T).
\end{align*}
\bf\item\rm\label{M:shape bem}
{\bf Shape-regularity:} 
It holds that 
\begin{align*}
\const{shape}^{-1}\le{\diam(T)}/{h_T}\le \const{shape} \quad \text{for all }T\in\TT_\coarse.
\end{align*}
\bf\item\rm\label{M:cent bem}
{\bf Elements lie in the center of their patches:} 
It holds\footnote{We use the convention $\dist(T,\emptyset):=\diam(\Gamma)$.} that
\begin{align*}
\diam(T)\le C_{\rm cent} \, \dist(T,\Gamma\setminus\pi_\bullet(T))
\quad\text{for all }T\in\TT_\bullet.
\end{align*}
\bf\item\rm\label{M:semi bem}
{\bf Local seminorm estimate:}
For all $v\in H^1(\Gamma)$, it holds that
 \begin{align*}
|v|_{H^{1/2}(\pi_{\bullet}(z))}\le \const{semi} \,\diam(\pi_{\bullet}(z))^{1/2}| v|_{H^1(\pi_{\bullet}(z))}
\quad\text{for all }z\in\Gamma.
\end{align*}
\end{enumerate}

The following proposition shows that \eqref{M:semi bem} is actually always satisfied.
However, in general the  multiplicative constant  depends  on the shape of the point patches.
The proof is inspired by \cite[Proposition 2.2]{hitchhiker}, where an analogous assertion for norms instead of seminorms is found.
For $\sigma=1/2$ and $d=2$,  we have already shown  the assertion in the recent own work \cite[Lemma~4.5]{resigabem}.
For polyhedral domains $\Omega$ with triangular meshes, it is  proved in \cite[Proposition~3.3]{faerconv} via interpolation techniques. 
A detailed proof for our setting is found in \cite[Proposition~5.2.2]{diss}, 
where we essentially follow the proof of \cite[Lemma~4.5]{resigabem}.

\begin{proposition}\label{prop:semi estimate}
Let $\widehat \omega\subset\R^{d-1}$ be a bounded and  connected Lipschitz domain and  $\gamma_\omega:\widehat \omega \to\omega\subseteq\Gamma$ be bi-Lipschitz, i.e., there exists a constant $\const{lipref}>0$ 
such that
\begin{align}\label{eq:Clipref}
C_{\rm lipref}^{-1} |s-t|\le \frac{|\gamma_\omega(s)-\gamma_\omega(t)|}{\diam(\omega)}\le C_{\rm lipref} |s-t|\quad\text{for all }s,t\in\widehat\omega.
\end{align}
Then, for arbitrary $\sigma\in(0,1)$ there exists a  constant $\const{semi}(\widehat\omega)>0$ such that 
\begin{align}\label{eq:local estimate}
|v|_{H^\sigma(\omega)}\le \const{semi}(\widehat\omega) \,\diam(\omega)^{1-\sigma}| v|_{H^1(\omega)}\quad\text{for all }v\in H^1(\Gamma).
\end{align}
The constant $\const{semi}(\widehat\omega)>0$  depends only on the dimension $d$, $\sigma$, the set $\widehat\omega$, and $\const{lipref}$. \hfill$\square$
\end{proposition}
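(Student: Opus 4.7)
The plan is to transfer the seminorm estimate from $\omega\subseteq\Gamma$ to the flat parameter domain $\widehat\omega\subset\R^{d-1}$ via $\gamma_\omega$, establish a scale-free analog there, and then transport back to $\omega$ while carefully tracking the powers of $\diam(\omega)$ produced by the change of variables. The hard part will be the scale-free reference estimate on $\widehat\omega$; everything else is bookkeeping.

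First, I would set $\widehat v:=v\circ\gamma_\omega$, which lies in $H^1(\widehat\omega)$ by the definition of $H^1(\Gamma)$ recalled in Section~\ref{subsec:sobolev}. Substituting $x=\gamma_\omega(s)$, $y=\gamma_\omega(t)$ in the Sobolev--Slobodeckij double integral~\eqref{eq:SS-norm} and using that the Gram determinant $\det(D\gamma_\omega^\top D\gamma_\omega)$ is comparable to $\diam(\omega)^{2(d-1)}$ (with constants depending only on $d$ and $\const{lipref}$), together with the two-sided bound $|\gamma_\omega(s)-\gamma_\omega(t)|\simeq\diam(\omega)\,|s-t|$ from~\eqref{eq:Clipref}, yields
\[
|v|_{H^\sigma(\omega)}^2\simeq\diam(\omega)^{d-1-2\sigma}\,|\widehat v|_{H^\sigma(\widehat\omega)}^2.
\]
The chain rule~\eqref{eq:chain rule} combined with the same two-sided Gram bound and the bi-Lipschitz estimate on $D\gamma_\omega$ produces the analogous identity for the $H^1$-seminorm,
\[
|v|_{H^1(\omega)}^2\simeq\diam(\omega)^{d-3}\,\norm{\nabla\widehat v}{L^2(\widehat\omega)}^2.
\]

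Second, on $\widehat\omega$ itself I would prove the scale-free estimate $|\widehat v|_{H^\sigma(\widehat\omega)}\lesssim\norm{\nabla\widehat v}{L^2(\widehat\omega)}$ with a constant depending only on $\widehat\omega$, $d$, and $\sigma$. Since the left-hand side is invariant under the addition of constants, I may replace $\widehat v$ by $\widehat v-\overline{\widehat v}$, where $\overline{\widehat v}$ is the average of $\widehat v$ on $\widehat\omega$. The continuous embedding $H^1(\widehat\omega)\hookrightarrow H^\sigma(\widehat\omega)$ (available since $\widehat\omega$ is a bounded Lipschitz domain) then gives $|\widehat v-\overline{\widehat v}|_{H^\sigma(\widehat\omega)}\lesssim\norm{\widehat v-\overline{\widehat v}}{H^1(\widehat\omega)}$, and the Poincar\'e inequality on the \emph{connected} bounded Lipschitz domain $\widehat\omega$ removes the $L^2$-contribution, leaving $\norm{\widehat v-\overline{\widehat v}}{H^1(\widehat\omega)}\simeq\norm{\nabla\widehat v}{L^2(\widehat\omega)}$. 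This is the only step where the particular shape of $\widehat\omega$ enters, which explains the dependence of $\const{semi}(\widehat\omega)$ on $\widehat\omega$.

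Assembling the two steps yields
\[
|v|_{H^\sigma(\omega)}^2\simeq\diam(\omega)^{d-1-2\sigma}\,|\widehat v|_{H^\sigma(\widehat\omega)}^2\lesssim\diam(\omega)^{d-1-2\sigma}\,\norm{\nabla\widehat v}{L^2(\widehat\omega)}^2\simeq\diam(\omega)^{2(1-\sigma)}\,|v|_{H^1(\omega)}^2,
\]
which is precisely~\eqref{eq:local estimate}. The bookkeeping in the first step is routine but must be done with care: each surface element contributes $\diam(\omega)^{d-1}$ while the singular kernel contributes $\diam(\omega)^{-(d-1+2\sigma)}$, and these conspire with the factor $\diam(\omega)^{d-3}$ from the $H^1$-side to produce exactly the announced exponent $1-\sigma$. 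I expect the only genuine difficulty to lie in Step~2, where Poincar\'e enforces the connectedness hypothesis on $\widehat\omega$ that is absent in the simpler norm version from~\cite{hitchhiker}.
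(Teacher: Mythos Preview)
Your proposal is correct and follows essentially the approach the paper indicates: pull back to the reference domain $\widehat\omega$ via the bi-Lipschitz chart, establish the scale-free seminorm embedding there, and collect the powers of $\diam(\omega)$ on the way back. The paper does not spell out the proof but refers to \cite[Proposition~5.2.2]{diss} and \cite[Lemma~4.5]{resigabem}, both ``inspired by'' the norm version \cite[Proposition~2.2]{hitchhiker}; your Step~2 is precisely that norm embedding combined with Poincar\'e on the connected reference domain, which is the natural way to upgrade the norm statement of \cite{hitchhiker} to the seminorm statement needed here.
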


\subsection{Mesh-refinement}
\label{subsec:general refinement bem}
For $\TT_\coarse\in\T$ and an arbitrary set of marked elements $\MM_\coarse\subseteq\TT_\coarse$, we associate a corresponding \textit{refinement} $\TT_\fine:=\refine(\TT_\coarse,\MM_\coarse) \in\T$ with $\MM_\coarse\subseteq\TT_\coarse\setminus\TT_\fine$, i.e., at least the marked elements are  refined.
Moreover, we suppose for the cardinalities that $\#\TT_\coarse<\#\TT_\fine$ if $\MM_\coarse\neq\emptyset$ and  $\TT_\fine=\TT_\coarse$ else.
Let $\refine(\TT_\coarse)\subseteq\T$ be the set of all $\TT_\fine$ such that there exist  meshes $\TT_{(0)},\dots,\TT_{(J)}$ and marked elements $\MM_{(0)},\dots,\MM_{(J-1)}$ with $\TT_\fine=\TT_{(J)}=\refine(\TT_{(J-1)},\MM_{(J-1)}),\dots,\TT_{(1)}=\refine(\TT_{(0)},\MM_{(0)})$ and $\TT_{(0)}=\TT_\coarse$. 
We assume that there exists a fixed initial mesh $\TT_0\in\T$ with $\T=\refine(\TT_0)$.

We suppose that there exist $\const{son}\ge2$ and $0<\ro{son}<1$ such that all meshes $\TT_\coarse\in\T$  satisfy for arbitrary marked elements $\MM_\coarse\subseteq\TT_\coarse$ with corresponding refinement $\TT_\fine:=\refine(\TT_\coarse,\MM_\coarse)$, the following elementary properties~\eqref{R:sons bem}--\eqref{R:reduction bem}:
\begin{enumerate}[(i)]
\renewcommand{\theenumi}{R\arabic{enumi}}
\bf\item\rm\label{R:sons bem}
\textbf{Son estimate:}
One step of refinement leads to a bounded increase of elements, i.e., 
\begin{align*}\#\TT_\fine \le \const{son}\,\#\TT_\coarse,
\end{align*}
%
%
\bf\item\rm\label{R:union bem}
\textbf{Father is union of sons:}
Each element is the union of its successors, i.e., 
\begin{align*}
T=\bigcup\set{{T'}\in\TT_\fine}{T'\subseteq T} \quad\text{for all }T\in\TT_\coarse.
\end{align*}
\bf\item\rm\label{R:reduction bem}
\textbf{Reduction of sons:}
Successors are uniformly smaller than their father, i.e., 
\begin{align*}
|T'| \le \ro{son}\,|T|\quad\text{for all }T\in\TT_\coarse\text{ and all }T'\in\TT_\fine\text{ with } T'\subsetneqq T.
\end{align*}
\end{enumerate}
By induction and the definition of $\refine(\TT_\coarse)$, one easily sees that \eqref{R:union bem}--\eqref{R:reduction bem} remain valid if $\TT_\fine$ is an arbitrary mesh in $\refine(\TT_\coarse)$.
In particular, \eqref{R:union bem}--\eqref{R:reduction bem} imply that each refined element $T\in\TT_\coarse\setminus\TT_\fine$ is split into at least two sons, wherefore 
\begin{align}\label{eq:R:refine bem}
\#(\TT_\coarse\setminus\TT_\fine)\le \#\TT_\fine-\#\TT_\coarse\quad\text{for all }\TT_\fine\in\refine(\TT_\coarse).
\end{align}
Besides~\eqref{R:sons bem}--\eqref{R:reduction bem}, we suppose the following less trivial requirements~\eqref{R:closure bem}--\eqref{R:overlay bem} with  generic constants $\const{clos},\const{over}>0$:
\begin{enumerate}[(i)]
\renewcommand{\theenumi}{R\arabic{enumi}}
\setcounter{enumi}{3}
\bf\item\rm\label{R:closure bem}
\textbf{Closure estimate:}
Let $(\TT_\ell)_{\ell\in\N_0}$ be a sequence in $\T$ such that  $\TT_{\ell+1}=\refine(\TT_\ell,\MM_\ell)$ with some $\MM_\ell\subseteq \TT_\ell$ for all $\ell\in\N_0$.
Then, it holds  that 
\begin{align*}
\# \TT_\ell-\#\TT_0\le \const{clos}\sum_{j=0}^{\ell-1}\#\MM_j\quad\text{for all }\ell\in\N_0.
\end{align*}
\bf\item\rm\label{R:overlay bem}
\textbf{Overlay property:}
For all $\TT_\coarse,\TT_\star\in\T$, there exists a common refinement $\TT_\fine\in\refine(\TT_\coarse)\cap\refine(\TT_\star)$ which satisfies the overlay estimate
\begin{align*}
\#\TT_\fine \le \const{over}(\#\TT_\star - \#\TT_0)+\#\TT_\coarse.
\end{align*}
\end{enumerate}

\subsection{Boundary element space}\label{subsec:ansatz}
With each $\TT_\coarse\in\T$, we associate a finite dimensional space of vector valued functions
\begin{align}
\XX_\coarse \subset L^2(\Gamma)^D\subset H^{-1/2}(\Gamma)^D.
\end{align}
Let $\Phi_\coarse\in\XX_\coarse$ be the corresponding Galerkin approximation of $\phi\in H^{-1/2}(\Gamma)^D$ from \eqref{eq:strong}, i.e.,
\begin{align}\label{eq:pregalerkin bem}
 \sprod{\mathfrak{V}\Phi_\coarse}{\Psi_\coarse} = \sprod{f}{\Psi_\coarse}
 \quad\text{for all }\Psi_\coarse\in\XX_\coarse.
\end{align}
We note the Galerkin orthogonality
\begin{align}\label{eq:galerkin bem}
 \sprod{f-\mathfrak{V}\Phi_\coarse}{\Psi_\coarse} = 0
 \quad\text{for all }\Psi_\coarse\in\XX_\coarse,
\end{align}
as well as the resulting C\'ea type quasi-optimality
\begin{align}\label{eq:cea bem}
 \norm{\phi-\Phi_\coarse}{H^{-1/2}(\Gamma)}
 \le C_{\text{C\'ea}}\min_{\Psi_\coarse\in\XX_\coarse}\norm{\phi-\Psi_\coarse}{H^{-1/2}(\Gamma)}\quad
 \text{with}\quad
 C_{\text{C\'ea}} := \const{cont}/\const{ell}.
\end{align}%

We assume the existence of constants $\const{inv}>0$,  $\q{loc},\q{proj},\q{supp} \in\N_0$, and $0<\ro{unity}<1$  such that the following properties~\eqref{S:inverse bem}--\eqref{S:unity bem} hold for all $\TT_\coarse\in\T$:
\begin{enumerate}[(i)]
\renewcommand{\theenumi}{S\arabic{enumi}}
\bf\item\rm\label{S:inverse bem}
\textbf{Inverse inequality:}
For  all  $\Psi_\coarse\in\XX_\coarse$, it holds that 
\begin{align*}
\norm{h_\bullet^{1/2}\Psi_\coarse}{L^2(\Gamma)}\le \const{inv} \, \norm{\Psi_\coarse}{H^{-1/2}(\Gamma)}.
\end{align*}
\bf\item\rm\label{S:nestedness bem}
\textbf{Nestedness:}
For  all $\TT_\fine\in\refine(\TT_\coarse)$, it holds that 
\begin{align*}
\XX_\coarse\subseteq\XX_\fine.
\end{align*}
\bf\item\rm\label{S:local bem}
\textbf{Local domain of definition:}
For all $\TT_\fine\in\refine(\TT_\coarse)$, 
 $T\in\TT_\coarse\setminus \Pi_\coarse^{\q{loc}}( \TT_\coarse\setminus\TT_\fine)\subseteq\TT_\coarse\cap\TT_\fine$, and $\Psi_\fine\in\XX_\fine$, it holds that 
\begin{align*}\Psi_\fine|_{\pi_\coarse^{\q{proj}}(T)} \in \set{\Psi_\coarse|_{\pi_\coarse^{\q{proj}}(T)}}{\Psi_\coarse\in\XX_\coarse}.
\end{align*}
\bf\item\rm\label{S:unity bem}{\bf Componentwise local approximation of unity:} For all $T\in\TT_\bullet$ and all $j\in\{1,\dots,\D\}$, 
there exists some $\Psi_{\bullet,T,j}\in\XX_\bullet$ with  
\begin{align*}
T\subseteq \supp (\Psi_{\bullet,T,j})\subseteq \pi_\bullet^{\q{supp}}(T),
\end{align*}
 such that only the $j$-th component  does not vanish, i.e., 
 \begin{align*}
 (\Psi_{\bullet,T,j})_{j'}=0\quad\text{for}\quad j'\neq j,
 \end{align*}
 and
\begin{align*}
\norm{1-(\Psi_{\bullet,T,j})_j}{L^2(\supp(\Psi_{\bullet,T,j}))} \le \ro{unity}{|\supp(\Psi_{\bullet,T,j})_j|}^{1/2}.
\end{align*}
\end{enumerate}
\begin{remark}\label{rem:approx one}
Clearly, \eqref{S:unity bem} is in particular satisfied if $\XX_\bullet$ is a product space, i.e., $\XX_\bullet=\prod_{j=1}^{\D}(\XX_{\bullet})_j$, and each component $(\XX_{\bullet})_j\subset L^2(\Gamma)$ satisfies \eqref{S:unity bem}.
\end{remark}

Besides \eqref{S:inverse bem}--\eqref{S:unity bem}, we suppose that there exist constants $\const{sz}>0$ as well as $\q{sz}\in\N_0$ such that for all $\TT_\coarse\in\T$ and  $\SS\subseteq \TT_\bullet$, there exists a linear Scott--Zhang-type operator  $J_{\coarse,\SS}:L^2(\Gamma)^{\D}\to\set{\Psi_\coarse\in\XX_\coarse}{\Psi_\coarse|_{\bigcup(\TT_\coarse\setminus\SS)}=0}$ with the following properties~\eqref{S:proj bem}--\eqref{S:stab bem}:
\begin{enumerate}[(i)]
\renewcommand{\theenumi}{S\arabic{enumi}}
\setcounter{enumi}{4}
\bf\item\rm\label{S:proj bem}
\textbf{Local projection property.}
Let $\q{loc}, \q{proj}\in\N_0$ from \eqref{S:local bem}.
For all $\psi\in L^2(\Gamma)^{\D}$ and $T\in\TT_\coarse$ with $\Pi_\coarse^{\q{loc}}(T)\subseteq\SS$, it holds  that 
\begin{align*}
(J_{\coarse,\SS} \psi)|_T = \psi|_T \quad\text{if }\psi|_{\pi_\coarse^{\q{proj}}(T)} \in \set{\Psi_\coarse|_{\pi_\coarse^{{\q{proj}}}(T)}}{\Psi_\coarse\in\XX_\coarse}.
\end{align*}

\bf\item\rm\label{S:stab bem}
\textbf{Local $\boldsymbol{L^2}$-stability.}
For all   $\psi\in L^2(\Gamma)^{\D}$ and $T\in\TT_\coarse$, it holds that 
\begin{align*}
\norm{ J_{\coarse,\SS} \psi}{L^2(T)}\le \const{sz} \norm{\psi}{L^2(\pi_\coarse^{\q{sz}}(T))}.
\end{align*}
\end{enumerate}

\subsection{Error estimator}\label{subsec:estimator bem}
Let $\TT_\coarse\in\T$.
Due to the regularity assumption $f\in H^1(\Gamma)^D$, the mapping property \eqref{eq:single layer operator}, and $\XX_\coarse\subset L^2(\Gamma)^D$, it holds that $f-\mathfrak{V}\Psi_\coarse\in H^1(\Gamma)^D$ for all $\Psi_\coarse\in\XX_\coarse$.
This allows to employ the  weighted-residual {\sl a~posteriori} error estimator
\begin{subequations}\label{eq:eta bem}
\begin{align}
 \eta_\coarse := \eta_\coarse(\TT_\coarse)
 \quad\text{with}\quad 
 \eta_\coarse(\SS)^2:=\sum_{T\in\SS} \eta_\coarse(T)^2
 \text{ for all }\SS\subseteq\TT_\coarse,
\end{align}
where the local refinement indicators read
\begin{align}
\eta_\coarse(T)^2:=h_T \seminorm{f-\mathfrak{V}\Phi_\coarse}{H^1(T)}^2\quad\text{for all }T\in\TT_\coarse.
\end{align}
\end{subequations}
The latter estimator goes back to the works \cite{cs96,c97}, where reliability \eqref{eq:reliable bem} is proved for standard 2D BEM with piecewise polynomials on polygonal geometries, while the corresponding result for 3D BEM is found in \cite{cms}.

\subsection{Adaptive algorithm}
We consider the following concrete realization of the abstract algorithm from  \cite[Algorithm~2.2]{axioms}.
\begin{algorithm}
\label{alg:bem algorithm}
\textbf{Input:} 
\hspace{-1.2mm}D\"orfler parameter $\theta\in(0,1]$ and marking constant $\const{min}\in[1,\infty]$.\\
\textbf{Loop:} For each $\ell=0,1,2,\dots$, iterate the following steps:
\begin{itemize}
\item[\rm(i)] Compute Galerkin approximation $\Phi_\ell\in\XX_\ell$.
\item[\rm(ii)] Compute refinement indicators $\eta_\ell({T})$
for all elements ${T}\in\TT_\ell$.
\item[\rm(iii)] Determine a set of marked elements $\MM_\ell\subseteq\TT_\ell$ which has up to the multiplicative constant $\const{min}$  minimal cardinality, such that the following D\"orfler marking is satisfied
\begin{align}
 \theta\,\eta_\ell^2 \le \eta_\ell(\MM_\ell)^2.
 \end{align}
\item[\rm(iv)] Generate refined mesh $\TT_{\ell+1}:=\refine(\TT_\ell,\MM_\ell)$. 
\end{itemize}
\textbf{Output:} Refined meshes $\TT_\ell$ and corresponding Galerkin approximations $\Phi_\ell$ with error estimators $\eta_\ell$ for all $\ell \in \N_0$.
\end{algorithm}

\subsection{Optimal convergence}
\label{sec:axioms}
Define
\begin{align}
 \T(N):=\set{\TT_\coarse\in\T}{\#\TT_\coarse-\#\TT_0\le N}
 \quad\text{for all }N\in\N_0
\end{align}
and for all $s>0$
\begin{align}
\const{approx}(s)
 := \sup_{N\in\N_0}\min_{\TT_\coarse\in\T(N)}(N+1)^s\,\eta_\coarse\in[0,\infty].
\end{align}
We say that the solution $\phi\in H^{-1/2}(\Gamma)^D$ lies in the \textit{approximation class $s$ with respect to the estimator} if
\begin{align}
\norm{\phi}{\mathbb{A}_s^{\rm est}} :=\const{approx}(s)<\infty.
\end{align}
By definition, $\norm{\phi}{\mathbb{A}^{\rm est}_s}<\infty$  implies that the error estimator $\eta_\coarse$
on the optimal meshes $\TT_\coarse$ decays at least with rate $\OO\big((\#\TT_\coarse)^{-s}\big)$. The following main theorem states that each possible rate $s>0$ is in fact realized by Algorithm~\ref{alg:bem algorithm}.
The proof is given in Section~\ref{sec:proof abstract bem}.
It essentially follows   by verifying the \textit{axioms of adaptivity} from \cite{axioms}.
Such an optimality result was first proved in \cite{fkmp} for the Laplace operator $\mathfrak{P}=-\Delta$ on a polyhedral domain $\Omega$.
As ansatz space, they considered piecewise constants on shape-regular triangulations.
\cite{part1} in combination with \cite{invest} extends the assertion to piecewise polynomials on shape-regular curvilinear triangulations  of some piecewise smooth boundary $\Gamma$.
Independently, \cite{gantumur} proved the same result for globally smooth $\Gamma$ and general self-adjoint and elliptic boundary integral operators.

\begin{theorem}\label{thm:abstract bem}
Let $(\TT_\ell)_{\ell\in\N_0}$ be the sequence of meshes generated by Algorithm~\ref{alg:bem algorithm}.
Then, there hold the following assertions {\rm (i)--(iii)}:
\begin{enumerate}[\rm (i)]
\item\label{item:qabstract reliable bem}
Suppose \eqref{M:patch bem}--\eqref{M:semi bem} and \eqref{S:unity bem}.
Then, 
the residual error estimator satisfies reliability, i.e., there exists a constant $\const{rel}>0$ such that
\begin{align}\label{eq:reliable bem}
 \norm{\phi-\Phi_\coarse}{H^{-1/2}(\Gamma)}\le \const{rel}\eta_\coarse\quad\text{for all }\TT_\coarse\in\T.
\end{align}
\item\label{item:qabstract linear convergence bem}
Suppose  \eqref{M:patch bem}--\eqref{M:semi bem}, \eqref{R:union bem}--\eqref{R:reduction bem},  \eqref{S:inverse bem}--\eqref{S:nestedness bem}, and \eqref{S:unity bem}.
Then, for arbitrary $0<\theta\le1$ and $\const{min}\in[1,\infty]$, the   estimator converges linearly, i.e., there exist constants $0<\ro{lin}<1$ and $\const{lin}\ge1$ such that
\begin{align}\label{eq:linear bem}
\eta_{\ell+j}^2\le \const{lin}\ro{lin}^j\eta_\ell^2\quad\text{for all }j,\ell\in\N_0.
\end{align}
\item\label{item:qabstract optimal convergence bem}
Suppose \eqref{M:patch bem}--\eqref{M:semi bem},  \eqref{R:sons bem}--\eqref{R:overlay bem}, and \eqref{S:inverse bem}--\eqref{S:stab bem}.
Then, there exists a constant $0<\theta_{\rm opt}\le1$ such that for all $0<\theta<\theta_{\rm opt}$ and $\const{min}\in[1,\infty)$, the estimator converges at optimal rate, i.e., for all $s>0$ there exist constants $c_{\rm opt},\const{opt}>0$ such that
\begin{align}\label{eq:optimal bem}
 c_{\rm opt}\norm{\phi}{\mathbb{A}_s^{\rm est}}
 \le \sup_{\ell\in\N_0}{(\# \TT_\ell-\#\TT_0+1)^{s}}\,{\eta_\ell}
 \le \const{opt}\norm{\phi}{\mathbb{A}_s^{\rm est}},
\end{align}
where the lower bound requires only \eqref{R:sons bem} to hold.
\end{enumerate}
\noindent All involved constants $\const{rel},\const{lin},q_{\rm lin},\theta_{\rm opt}$, and $\const{opt}$ depend only on the assumptions made as well as the dimensions $d,D$, the coefficients of the differential operator $\mathfrak{P}$, and $\Gamma$, while $\const{lin},\ro{lin}$ depend additionally on $\theta$ and the sequence $(\Phi_\ell)_{\ell\in\N_0}$, and $\const{opt}$ depends furthermore on $\const{min}$, and $s>0$.
The constant $c_{\rm opt}$ depends only on $\const{son}, \#\TT_0$,  $s$, and if there exists $\ell_0$ with $\eta_{\ell_0}=0$, then also on $\ell_0$ and $\eta_0$.
\end{theorem}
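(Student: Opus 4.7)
The plan is to verify the \emph{axioms of adaptivity} from \cite{axioms}, so that \eqref{eq:linear bem} and the upper bound in \eqref{eq:optimal bem} follow as abstract consequences. I address the three assertions in turn.

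For (i), I would start from the isomorphism $\mathfrak{V}\colon H^{-1/2}(\Gamma)^D \to H^{1/2}(\Gamma)^D$ guaranteed by \eqref{eq:ellipticity bem} and \eqref{eq:continuity bem}, which gives
\[
\norm{\phi - \Phi_\coarse}{H^{-1/2}(\Gamma)} \lesssim \norm{f - \mathfrak{V}\Phi_\coarse}{H^{1/2}(\Gamma)}.
\]
The Galerkin orthogonality \eqref{eq:galerkin bem} combined with \eqref{S:unity bem} places $f - \mathfrak{V}\Phi_\coarse$ into the hypotheses of the Faermann-type localization (Proposition~\ref{prop:faermann}), yielding
\[
\norm{f - \mathfrak{V}\Phi_\coarse}{H^{1/2}(\Gamma)}^2 \lesssim \sum_{T\in\TT_\coarse}\sum_{T' \in \Pi_\coarse(T)} \seminorm{f - \mathfrak{V}\Phi_\coarse}{H^{1/2}(T \cup T')}^2.
\]
The local seminorm estimate \eqref{M:semi bem}, together with \eqref{M:shape bem} and \eqref{M:locuni bem}, converts each patch seminorm into $h_T^{1/2}\seminorm{f - \mathfrak{V}\Phi_\coarse}{H^1(\pi_\coarse(T))}$, and bounded overlap \eqref{M:patch bem} absorbs multiplicities to produce $\eta_\coarse$.

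For (ii), the key point is to verify \emph{estimator stability} of $\eta_\coarse$ on $\TT_\coarse \cap \TT_\fine$ and \emph{estimator reduction} on $\TT_\coarse \setminus \TT_\fine$ in the sense of \cite{axioms}. A triangle inequality in the $h_\coarse^{1/2}\nabla_\Gamma$-weighted $L^2$-norm reduces both properties to controlling $\norm{h_\coarse^{1/2} \nabla_\Gamma \mathfrak{V}(\Phi_\fine - \Phi_\coarse)}{L^2(\Gamma)}$. Applying the non-local inverse inequality of Proposition~\ref{prop:invest for V} to $\psi := \Phi_\fine - \Phi_\coarse \in \XX_\fine$, and then absorbing $\norm{h_\fine^{1/2}\psi}{L^2(\Gamma)}$ via the classical inverse estimate \eqref{S:inverse bem}, delivers the required stability constant; on the refined elements the mesh-size contraction $h_\fine \le \ro{son}^{1/(d-1)} h_\coarse$ from \eqref{R:union bem}--\eqref{R:reduction bem} supplies a reduction factor strictly less than one. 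General quasi-orthogonality is immediate from \eqref{eq:ellipticity bem}, \eqref{S:nestedness bem}, and the C\'ea estimate \eqref{eq:cea bem}. Together with reliability from (i), these three properties yield \eqref{eq:linear bem} by \cite[Theorem~4.1]{axioms}.

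For (iii), the remaining axiom and the main obstacle is \emph{discrete reliability}: find $\RR_{\coarse,\fine} \supseteq \TT_\coarse \setminus \TT_\fine$ with $\#\RR_{\coarse,\fine} \lesssim \#(\TT_\coarse \setminus \TT_\fine)$ such that
\[
\norm{\Phi_\fine - \Phi_\coarse}{H^{-1/2}(\Gamma)} \lesssim \eta_\coarse(\RR_{\coarse,\fine}).
\]
Following \cite{fkmp,part1}, I would set $\RR_{\coarse,\fine} := \Pi_\coarse^{q}(\TT_\coarse \setminus \TT_\fine)$ with $q := \max\{\q{loc},\q{proj},\q{sz}\}+1$, whose cardinality is controlled by \eqref{M:patch bem} and \eqref{eq:R:refine bem}; test the error equation $\sprod{\mathfrak{V}(\Phi_\fine - \Phi_\coarse)}{\cdot} = \sprod{f - \mathfrak{V}\Phi_\coarse}{\cdot}$ with $\Psi_\coarse := J_{\coarse,\TT_\fine\setminus\TT_\coarse}(\Phi_\fine - \Phi_\coarse) \in \XX_\coarse$; and exploit the local projection property \eqref{S:proj bem} together with the local domain of definition \eqref{S:local bem} so that $\Phi_\fine - \Phi_\coarse - \Psi_\coarse$ vanishes on $\TT_\coarse \setminus \RR_{\coarse,\fine}$. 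The $L^2$-stability \eqref{S:stab bem} combined with \eqref{S:inverse bem} furnishes $H^{-1/2}$-control of the test function, while a localized rerun of the Faermann argument from (i) reduces the right-hand side to $\eta_\coarse(\RR_{\coarse,\fine})$. With discrete reliability in hand, the axioms of adaptivity \cite[Theorem~4.1]{axioms} together with \eqref{R:closure bem}--\eqref{R:overlay bem} give the upper bound in \eqref{eq:optimal bem}, and the lower bound follows routinely from \eqref{R:sons bem}.
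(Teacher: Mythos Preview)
Your overall strategy of verifying the axioms of adaptivity matches the paper, and your treatment of reliability~(i) and of stability/reduction in~(ii) is essentially the same as the paper's (Sections~\ref{subsec:reliability bem}--\ref{subsec:reduction bem}). However, there is one genuine gap and one muddled sketch.

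\textbf{Quasi-orthogonality is not immediate.} Your claim that ``general quasi-orthogonality is immediate from \eqref{eq:ellipticity bem}, \eqref{S:nestedness bem}, and the C\'ea estimate'' only works when $\sprod{\mathfrak{V}\,\cdot}{\cdot}$ is Hermitian, via the Pythagoras identity (see Remark~\ref{rem:E3 bem}). The paper allows lower-order terms $b_i\neq 0$, so $\mathfrak{V}$ need not be self-adjoint, and the telescoping-sum argument fails. The paper's route (Section~\ref{subsec:orthogonality bem}) is to introduce the principal part $\mathfrak{P}_0$, observe that its single-layer operator $\mathfrak{V}_0$ is self-adjoint, and show that $\mathfrak{V}-\mathfrak{V}_0:H^{-1/2}(\Gamma)^D\to H^{1/2}(\Gamma)^D$ is compact; then general quasi-orthogonality~\eqref{item:orthogonality} follows from the abstract theory of \cite{ffp14,helmholtz} for elliptic-plus-compact problems. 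This step also explains why the constants $\const{lin},\ro{lin}$ in the theorem statement are allowed to depend on the sequence $(\Phi_\ell)_{\ell\in\N_0}$, something your argument would not account for.

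\textbf{Discrete reliability.} Your sketch has the right intuition but is garbled in the details. First, the set you feed to the Scott--Zhang projector must be a subset of $\TT_\coarse$; the paper takes $\SS_1:=\TT_\coarse\cap\TT_\fine$, not $\TT_\fine\setminus\TT_\coarse$. Second, you do not ``test the error equation with $\Psi_\coarse$''---that would vanish by Galerkin orthogonality---but rather subtract $J_{\coarse,\SS_1}(\Phi_\fine-\Phi_\coarse)\in\XX_\coarse$ from the test function $\Phi_\fine-\Phi_\coarse$, so that the remainder is supported near the refined elements. Third, the paper needs an additional smooth cut-off function $\widetilde\chi_{\SS_2}$ (constructed in Lemma~\ref{lem:tildechi}) with $|\nabla_\Gamma\widetilde\chi_{\SS_2}|\lesssim h_\coarse^{-1}$ to split the residual pairing into an $H^{1/2}$-duality term and an $L^2$-duality term, and then invokes the Poincar\'e-type estimate of Proposition~\ref{prop:poincare} to control the $L^2$-part. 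This is also why the correct patch radius involves $\q{supp}$ (the paper uses $\RR_{\coarse,\fine}=\Pi_\coarse^{\q{supp}+\q{loc}+2}(\TT_\coarse\setminus\TT_\fine)$), which your choice $q=\max\{\q{loc},\q{proj},\q{sz}\}+1$ omits.
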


\begin{remark}
If the sesquilinear form $\sprod{\mathfrak{V}\,\cdot}{\cdot}$ is Hermitian, then $\const{lin}$, $\ro{lin},$ and  $\const{opt}$ are  independent of $(\Phi_\ell)_{\ell\in\N_0}$; see Remark~\ref{rem:E3 bem} below.
\end{remark}

\begin{remark}
Let $\Gamma_0\subsetneqq\Gamma$ be an open subset of $\Gamma=\partial\Omega$ and let $\mathfrak{E}_0:L^2(\Gamma_0)^D\to L^2(\Gamma)^D$ denote the operator that extends a function defined on $\Gamma_0$ to a function on $\Gamma$ by zero.
We define the space of restrictions $H^{1/2}(\Gamma_0):=\set{v|_{\Gamma_0}}{v\in H^{1/2}(\Gamma)}$ endowed with the quotient norm $v_0\mapsto\inf\set{\norm{v}{H^{1/2}(\Gamma)}}{v|_{\Gamma_0}=v_0}$ 
 and its dual space $\widetilde H^{-1/2}(\Gamma_0):= H^{1/2}(\Gamma_0)^*$.
According to \cite[Section~2.1]{invest}, $\mathfrak{E}_0$ can be extended to an isometric operator $\mathfrak{E}_0:\widetilde H^{-1/2}(\Gamma_0)^D\to  H^{-1/2}(\Gamma)^D$.
Then, one can consider the integral equation
\begin{align}\label{eq:screen}
(\mathfrak{V}\mathfrak{E}_0\phi)|_{\Gamma_0}=f|_{\Gamma_0},
\end{align}
where $(\mathfrak{V}\mathfrak{E}_0(\cdot))|_{\Gamma_0}:\widetilde H^{-1/2}(\Gamma_0)^D\to H^{1/2}(\Gamma_0)^D$.
In the literature, such problems are known as \emph{screen problems}; see, e.g., \cite[Section~3.5.3]{ss}.
 Theorem~\ref{thm:abstract bem}  holds analogously for the screen problem \eqref{eq:screen}.
Indeed, the works \cite{fkmp,part1,invest,gantumur} cover this case as well.
However, to ease the presentation, we  focus  on closed boundaries $\Gamma_0=\Gamma=\partial\Omega$.
\end{remark}

\begin{remark}
{\rm(a)}
Let us additionally assume that $\XX_\coarse$ contains all componentwise constant functions, i.e.,  
\begin{align}\label{eq:alexass}
x\in\XX_\coarse
\quad\text{for all }x\in\C^D.
\end{align}
Then, under the assumption that $\norm{h_{\ell}}{L^\infty(\Omega)}\to0$ as $\ell\to\infty$, one can show  that
$\XX_\infty:=\overline{\bigcup_{\ell\in\N_0}\XX_\ell}=H^{-1/2}(\Gamma)^D$.
To see this, recall that $H^{1/2}(\Gamma)^D$ is  continuously and densely embedded in $L^2(\Gamma)^D$ which is itself continuously and densely embedded in $H^{-1/2}(\Gamma)^D$.
For $\psi\in H^{-1/2}(\Gamma)^D$ and arbitrary $\varepsilon>0$, let $\psi_{\varepsilon}\in H^{1/2}(\Gamma)^D$ with $\norm{\psi-\psi_{\varepsilon}}{H^{-1/2}(\Gamma)}\le\varepsilon$.
We abbreviate the  projection operator $J_\ell:=J_{\ell,\TT_\ell}$ for all $\ell\in\N_0$.
For all $T\in\TT_\ell$, the projection property~\eqref{S:proj bem} in combination with our additional assumption \eqref{eq:alexass}, the triangle inequality, and the local $L^2$-stability \eqref{S:stab bem} show that
\begin{align*}
\norm{(1-J_\ell)\psi_{\varepsilon}}{L^2(T)}&\stackrel{\eqref{S:proj bem}}=\Big\|(1-J_\ell)\Big(\psi_{\varepsilon} -\frac{1}{|\pi_\coarse^{\q{sz}}(T)|}\int_{\pi_\coarse^{\q{sz}}(T)} \psi_{\varepsilon}\,dx\Big)\Big\|_{L^2(T)}\\
&\stackrel{\eqref{S:stab bem}}\le (1+\const{sz}) \Big\|\psi_{\varepsilon} -\frac{1}{|\pi_\coarse^{\q{sz}}(T)|}\int_{\pi_\coarse^{\q{sz}}(T)} \psi_{\varepsilon}\,dx\Big\|_{L^2(\pi_\coarse^{\q{sz}}(T))}.
\end{align*}
With this, the Poincar\'e-type inequality from Lemma~\ref{lem:Poincare} below, and  \eqref{M:patch bem}--\eqref{M:shape bem}, we see that
\begin{align*}
\norm{(1-J_\ell)\psi_{\varepsilon}}{L^2(T)}\lesssim h_T^{1/2}|\psi_{\varepsilon}|_{H^{1/2}(\pi_\coarse^{\q{sz}}(T))}\le \norm{h_\ell}{L^\infty(\Gamma)}^{1/2}|\psi_{\varepsilon}|_{H^{1/2}(\pi_\coarse^{\q{sz}}(T))}.
\end{align*}
Summing over all elements, we obtain that
\begin{align*}
\norm{(1-J_\ell)\psi_{\varepsilon}}{H^{-1/2}(\Gamma)}^2\lesssim\norm{(1-J_\ell)\psi_{\varepsilon}}{L^{2}(\Gamma)}^2\lesssim \norm{h_\ell}{L^\infty(\Gamma)} \sum_{T\in \TT_\coarse} \seminorm{\psi_{\varepsilon}}{H^{1/2}(\pi_\coarse^{\q{sz}}(T))}^2.
\end{align*}
With \eqref{M:patch bem}--\eqref{M:cent bem}, Proposition~\ref{prop:easy faermann} and Lemma~\ref{lem:patch to elements} from below prove that   $\sum_{T\in \TT_\coarse} \seminorm{\psi_{\varepsilon}}{H^{1/2}(\pi_\coarse^{\q{sz}}(T))}^2$ $\lesssim$ $\seminorm{\psi_{\varepsilon}}{H^{1/2}(\Gamma)}^2$. 
Overall, this shows that
\begin{align*}
\min_{\psi_\ell\in\XX_\ell} \norm{\psi-\psi_\ell}{H^{-1/2}(\Gamma)} 
\le \norm{\psi-\psi_\varepsilon}{H^{-1/2}(\Gamma)} + \norm{(1-J_\ell)\psi_\varepsilon}{H^{-1/2}(\Gamma)} 
\lesssim \varepsilon + \norm{h_\ell}{L^\infty(\Gamma)}^{1/2} |\psi_\varepsilon|_{H^{1/2}(\Gamma)}.
\end{align*}

Since $\lim_{\ell\to \infty}\norm{h_\ell}{L^\infty(\Gamma)}=0$ and $\varepsilon$ was arbitrary, this concludes the proof.

{\rm(b)} The latter observation allows to follow the ideas of~\cite{helmholtz} and to show that the adaptive algorithm yields convergence provided that the sesquilinear form $\sprod{\mathfrak{V}\,\cdot}{\cdot}$ is only elliptic up to some compact perturbation and that the continuous problem is well-posed. This includes, e.g., adaptive BEM for the Helmholtz equation; see~\cite[Section~6.9]{s}. For details, the reader is referred  to~\cite{helmholtz,helmholtzbem}.%
\end{remark}

\subsection{Application to BEM with piecewise polynomials on triangulations}\label{sec:standard applications} 
For $d=2,3$, we fix the reference simplex $T_{\rm ref}$ as the closed convex hull of the $d$ vertices $\{0,e_1,\dots,e_{d-1}\}$. 
The convex hull of any $d-1$ vertices is called \emph{facet}.
A set $\TT_\coarse$ of subsets of $\Gamma$ is called \emph{$\kappa$-shape regular triangulation} if the following properties {\rm (i)--(v)} are satisfied:
\begin{enumerate}[\rm (i)]
\item\label{item:gammaTs} $\TT_\coarse$ is a finite set of elements $T$ of the form $T=\gamma_T({\widehat{T}})$, where $\gamma_T:{  T_{\rm ref}}\to T$  is a bi-Lipschitz mapping whose Lipschitz constants are bounded from above by $\kappa$.
\item  $\TT_\coarse$ covers $\Gamma$, i.e., $\Gamma = \bigcup_{T\in\TT_\coarse}{T}$.
\item\label{item:conforming} There are no hanging nodes in the sense that the intersection $T\cap T'$ of any $T,T'\in\TT_\coarse$ with $T\neq T'$ is either empty or a common facet, i.e., $T\cap T'=\gamma_T(f)=\gamma_{T'}(f')$ for some facets $f$ and $f'$ of $T_{\rm ref}$.
\item\label{item:reference patch} The parametrizations of neighboring elements are compatible in the sense that for all nodes $z$ (i.e., images of the $\{0,e_1,\dots,e_{d-1}\}$ under an element map $\gamma_T$) , there exists an interval $\widetilde\pi_\coarse(z)$ for $d=2$ and a convex polygonal $\widetilde\pi_\coarse(z)$ for $d=3$ respectively as well as a bijective and  bi-Lipschitz continuous mapping $\gamma_z:\widetilde\pi_\coarse(z)\to \pi_\coarse(z)$ such that $\gamma_z^{-1}\circ\gamma_T$ is affine for all $T\in\Pi_\coarse(z)$.
\item If $d=2$, $\TT_\coarse$ is locally-quasi uniform in the sense that $\diam(T)\le \kappa\,\diam(T')$ for all $T,T'\in\TT_\coarse$ with $T\cap T'\neq \emptyset$.
\end{enumerate} 

Up to the fact that we allow $\gamma_T$ to be bi-Lipschitz  instead of $C^1$, this definition is slightly stronger than  \cite[Definition~2.4]{invest}. 
The property \eqref{item:conforming} stems from \cite[Assumption~4.3.25]{ss} and is stronger than the corresponding assumption \cite[Definition~2.4~(iii)]{invest}. 
Further,  \eqref{item:gammaTs} implies \cite[Definition~2.4~(v)]{invest}, i.e., for all $T\in\TT_\coarse$, there holds with the extremal eigenvalues $\lambda_{\rm min}(\cdot)$ and $\lambda_{\rm max}(\cdot)$ that
\begin{align}\label{eq:Gramian}
\sup_{t\in T_{\rm ref}}\Big(\frac{\diam(T)^2}{\lambda_{\rm min}(D\gamma_T^\top(t) D\gamma_T(t))}+\frac{\lambda_{\rm max}(D\gamma_T^\top (t) D\gamma_T(t))}{\diam(T)^2}\Big)\lesssim 1;
\end{align}
see, e.g.,  \cite[(3.26)--(3.27)]{faerconv} or \cite[Lemma~5.2.1]{diss}.

Let $\TT_0$ be a $\kappa_0$-shape regular triangulation.
For $d=2$, we define $\refine(\cdot)$ as in \cite{eps65} via 1D-bisection in the parameter domain. 
For $d=3$, we define $\refine(\cdot)$ as in \cite{stevenson08} via newest vertex bisection in the parameter domain.
In particular, all corresponding refinements $\TT_\coarse\in\T=\refine(\TT_0)$ are again $\kappa$-shape regular triangulations with some fixed $\kappa$ depending on $\kappa_0$.
We also note that the number of different $\widetilde\pi_\coarse(z)$ in \eqref{item:reference patch} is uniformly bounded, i.e., there exist only finitely many reference node patches.
Finally, let $p\in\N_0$ be a fixed polynomial order.
For each $\TT_\coarse$, we associate the space of (transformed) piecewise polynomials 
\begin{align}
\XX_\coarse:=\mathcal{P}^p(\TT_\coarse):= \set{\Psi_\coarse\in L^2(\Gamma)}{\Psi_\coarse\circ\gamma_T \text{ is a polynomial of degree }p \text{ for all }T\in\TT_\coarse }.
\end{align}
For this concrete setting, we already pointed out that \cite{part1} in combination with \cite{invest} proved linear convergence \eqref{eq:linear bem} at optimal rate \eqref{eq:optimal bem} if $\mathfrak{P}=-\Delta$ is the Laplace operator.
The following theorem generalizes this result to arbitrary $\mathfrak{P}$ as in Section~\ref{subsec:model problem bem}. 
\begin{theorem}\label{thm:concrete main}
Piecewise polynomials on $\kappa$-shape regular triangulations satisfy the abstract properties \eqref{M:patch bem}--\eqref{M:semi bem}, \eqref{R:sons bem}--\eqref{R:overlay bem}, and \eqref{S:inverse bem}--\eqref{S:stab bem}, where the constants depend only on the dimension $D$, the regularity constant $\kappa$, the initial mesh $\TT_\coarse$, and  the polynomial order $p$.
By Theorem~\ref{thm:abstract bem}, this implies reliability \eqref{eq:reliable bem} of the error estimator and linear convergence \eqref{eq:linear bem} at optimal rate \eqref{eq:optimal bem} for the adaptive strategy from Algorithm~\ref{alg:bem algorithm}.
\end{theorem}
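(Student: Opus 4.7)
\smallskip

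The plan is to verify each of the seventeen abstract assumptions one by one, grouping them according to whether they concern the meshes, the refinement rule, or the ansatz space. Since the concrete setting is essentially the one treated (for the Laplacian) in \cite{part1,invest,fkmp}, much of the verification reduces to citing results already in the literature and combining them with the observation that assumptions \eqref{M:patch bem}--\eqref{S:stab bem} do not involve the PDE operator $\mathfrak{P}$ at all. Hence the only genuinely new content of Theorem~\ref{thm:concrete main} is the bookkeeping needed to collect these facts in one place and to match the definitions used in the respective sources with the ones used in Section~\ref{sec:abstract setting bem}.

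First I would treat the mesh assumptions. Bounded element patch~\eqref{M:patch bem}, local quasi-uniformity~\eqref{M:locuni bem} and shape-regularity~\eqref{M:shape bem} are immediate from properties~(i),~(v) of $\kappa$-shape regularity together with \eqref{eq:Gramian}, as is the center-of-patch property~\eqref{M:cent bem}. The local seminorm estimate~\eqref{M:semi bem} is the content of Proposition~\ref{prop:semi estimate} applied to the finitely many reference node patches $\widetilde\pi_\bullet(z)$ from~(iv): for each $z\in\TT_\bullet$ the map $\gamma_z:\widetilde\pi_\bullet(z)\to\pi_\bullet(z)$ is bi-Lipschitz with constants controlled by $\kappa$ (after rescaling by $\diam(\pi_\bullet(z))$), and since there are only finitely many reference patches the constant $\const{semi}(\widehat\omega)$ in Proposition~\ref{prop:semi estimate} can be chosen uniformly.

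Next I would verify the refinement assumptions. The son estimate~\eqref{R:sons bem}, father-is-union-of-sons~\eqref{R:union bem} and son reduction~\eqref{R:reduction bem} are direct consequences of the definition of 1D-bisection for $d=2$ and newest vertex bisection for $d=3$; in both cases $\const{son}=2$ and $\ro{son}=1/2$. The closure estimate~\eqref{R:closure bem} is exactly the main result of \cite{eps65} for $d=2$ and \cite{stevenson08} for $d=3$ (note that for the latter, the assumption on $\TT_0$ used in \cite{stevenson08} is automatically satisfied on parametric patches coming from our reference node patches, cf.~(iv)). The overlay property~\eqref{R:overlay bem} is the standard overlay lemma for binary trees that goes back to \cite{ckns08}, and holds for both refinement rules considered here.

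Finally I would tackle the space assumptions, which is where I expect the most work. Nestedness~\eqref{S:nestedness bem} follows because bisection only subdivides elements and polynomials of degree $p$ on $T$ remain polynomials of degree $p$ on each son after composition with the affine-on-sons parametrizations provided by~(iv). The inverse inequality~\eqref{S:inverse bem} is proved in \cite[Proposition~3.3 and Proposition~3.4]{invest} (scaling arguments on the reference element combined with the global inverse estimate $\|h_\bullet^{1/2}\Psi_\bullet\|_{L^2(\Gamma)}\lesssim \|\Psi_\bullet\|_{H^{-1/2}(\Gamma)}$ for piecewise polynomials). For the local domain of definition~\eqref{S:local bem}, one observes that for $T\in\TT_\bullet\cap\TT_\circ$ far from refined elements the restriction $\Psi_\circ|_{\pi_\bullet^{\q{proj}}(T)}$ is already a piecewise polynomial of degree $p$ on the unchanged parametric mesh; here $\q{loc}=\q{proj}=1$ suffices. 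The componentwise approximation of unity~\eqref{S:unity bem} is trivial with $\ro{unity}=0$, since $\XX_\bullet=\mathcal{P}^p(\TT_\bullet)^D$ contains the indicator function $\mathbf{1}_T\,e_j$ for every $T\in\TT_\bullet$ and unit vector $e_j\in\C^D$, which yields $\supp(\Psi_{\bullet,T,j})=T$ and $\|1-(\Psi_{\bullet,T,j})_j\|_{L^2(T)}=0$ (so Remark~\ref{rem:approx one} applies). The hardest part is constructing the Scott--Zhang-type operator $J_{\bullet,\SS}$ satisfying~\eqref{S:proj bem}--\eqref{S:stab bem}. I would take $J_{\bullet,\SS}$ componentwise to be the piecewise $L^2$-orthogonal projection onto $\mathcal{P}^p(\SS)$, restricted to elements of $\SS$ and extended by zero outside $\bigcup\SS$; local $L^2$-stability~\eqref{S:stab bem} is then immediate with $\q{sz}=0$, and the local projection property~\eqref{S:proj bem} with $\q{proj}=0$ follows because elementwise $L^2$-projection reproduces piecewise polynomials on each element. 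The support constraint $J_{\bullet,\SS}\psi|_{\bigcup(\TT_\bullet\setminus\SS)}=0$ holds by construction. This completes the verification, and the conclusions of Theorem~\ref{thm:abstract bem} then yield reliability~\eqref{eq:reliable bem}, linear convergence~\eqref{eq:linear bem}, and optimal rates~\eqref{eq:optimal bem}.
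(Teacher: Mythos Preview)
Your proposal is essentially correct and follows the same approach as the paper's own proof: verify the mesh axioms via the standard shape-regularity consequences and Proposition~\ref{prop:semi estimate}, cite the bisection/NVB literature for the refinement axioms, and use elementwise $L^2$-projection for the Scott--Zhang operator together with indicator functions for \eqref{S:unity bem}.

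Two minor inaccuracies are worth correcting. First, for $d=3$ one step of NVB can split an element into up to four children, so $\const{son}=4$ rather than $2$. Second, bisection halves the \emph{reference} element, so $|\gamma_T^{-1}(T')|\le\tfrac12|\gamma_T^{-1}(T)|$, but \eqref{R:reduction bem} is stated for surface measure $|T'|$; you must transfer this through the bi-Lipschitz map $\gamma_T$, which gives only $\ro{son}<1$ depending on $\kappa$, not $\ro{son}=1/2$. Also note that \eqref{S:local bem} actually holds with $\q{loc}=\q{proj}=0$ (piecewise polynomials are purely local), which is simpler than your $\q{loc}=\q{proj}=1$, and the framework requires $0<\ro{unity}<1$, so say ``any $\ro{unity}\in(0,1)$ works'' rather than ``$\ro{unity}=0$''.
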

\begin{proof}
The elementary mesh properties \eqref{M:patch bem}--\eqref{M:shape bem} are verified in \cite[Section~2.3]{invest}.
\eqref{M:cent bem} is stated in \cite[Section~4.1]{invest}.
\eqref{M:semi bem} follows from Proposition~\ref{prop:semi estimate} together with the fact that there are only finitely many reference node patches.

For $d=2$, the son estimate \eqref{R:sons bem} is clearly satisfied with $C_{\rm son}=2$.
For $d=3$, it is well-known that NVB satisfies \eqref{R:sons bem} with $C_{\rm son}=4$.
Further, \eqref{R:sons bem} holds true by definition.
Reduction of sons \eqref{R:reduction bem} is obviously satisfied in the parameter domain, i.e., $|\gamma_T^{-1}(T')| \le \,|\gamma_T^{-1}(T)|/2$ for all $T'\in\TT_\fine\in\refine(\TT_\coarse)$ with $T'\subsetneqq T$. 
Since $\gamma_T$ is bi-Lipschitz, this property transfers to the physical domain, i.e., $|\gamma_T^{-1}(T')| \le \,\ro{son}|\gamma_T^{-1}(T)|$, where $0<\ro{son}<1$ depends only on $\kappa$; see, e.g., \cite[Section~4.5.3]{diss} for details.
For $d=2$, \eqref{R:closure bem}--\eqref{R:overlay bem} are found in \cite[Theorem~2.3]{eps65}.
For $d=3$,  the closure estimate \eqref{R:closure bem} is  proved in \cite[Theorem~2.4]{bdd},  \cite[Theorem~6.1]{stevenson}, or \cite[Theorem~2]{kpp13}, where the latter result avoids any additional assumption on $\TT_0$.
The overlay property is proved in \cite[Proof of Lemma~5.2]{stevenson} or \cite[Section~2.2]{ckns}.

The inverse inequality  \eqref{S:inverse bem} for piecewise polynomials on the boundary is proved, e.g., in \cite[Lemma~A.1]{invest}. 
Nestedness \eqref{S:nestedness bem} is trivially satisfied.
Also \eqref{S:local bem} is trivially satisfied with $\q{loc},\q{proj}=0$.
Clearly, \eqref{S:unity bem} holds with $(\Psi_{\coarse,T,j})_{j'}:=0$ for $j'\neq j$ and $(\Psi_{\coarse,T,j})_j:=\chi_T$, where $\chi_T$ denotes the indicator function on $T$.
Finally, for $\SS\subseteq\TT_\coarse\in\T$, we define with the elementwise $L^2(T)$-orthogonal projection $P_{\coarse,T}:L^2(T)^D\to \set{\Psi_\coarse|_T}{\Psi_\coarse\in\XX_\coarse}$
\begin{align}
J_{\coarse,\SS}:L^2(\Gamma)\to\XX_\coarse,\quad \psi\mapsto J_{\coarse,\SS}:=\begin{cases}P_{\coarse,T}\psi\quad&\text{on all }T\in\SS, \\ 0 \quad&\text{on all }T\in\TT_\coarse\setminus\SS.\end{cases}
\end{align}
This definition immediately yields \eqref{S:proj bem}--\eqref{S:stab bem} with $\q{sz}=0$.
\end{proof}

\begin{remark}\label{rem:concrete main}
We mention that Theorem~\ref{thm:concrete main} is also  valid if $d=2$ and $\XX_\coarse$ is chosen as set of (transformed) {splines} which are piecewise polynomials with certain differentiability conditions at the break points.
The required properties are (implicitly) verified in \cite{resigabem}.
As in \cite{igafem} (resp.\ \cite{tigafem}), where we have verified the abstract FEM framework of \cite{igafem} for IGAFEM with hierarchical splines~\cite{juttler} and the mesh-refinement from \cite{igafem} (resp.\ T-splines with the mesh-refinement from~\cite{morgensternT1}), 
the verification of the present abstract BEM framework for 3D IGABEM will be addressed in the future work~\cite{riga}.
\end{remark}


\section{Proof of Theorem~\ref{thm:abstract bem}}\label{sec:proof abstract bem}
In the following   subsections, we prove Theorem~\ref{thm:abstract bem}.
Reliability \eqref{eq:reliable bem}  is  treated explicitly in  Section~\ref{subsec:reliability bem}.
It follows immediately from an auxiliary result on the localization of the Sobolev--Slobodeckij norm which is investigated in Section~\ref{subsec:localization}.
To prove Theorem~\ref{thm:abstract bem}~\eqref{item:qabstract linear convergence bem}--\eqref{item:qabstract optimal convergence bem}, 
we  verify the following abstract properties \eqref{item:stability}--\eqref{item:discrete reliability} for the error estimator.
Together with \eqref{R:sons bem}, the closure estimate \eqref{R:closure bem}, and the overlay property \eqref{R:overlay bem}, these already imply linear convergence of the estimator at optimal algebraic rate; see   \cite{axioms}.

There exist $\const{\rho}, \const{qo}$, $\const{ref}$, $\const{drel}$, $\const{son}$, $\const{clos}$, $\const{over}\ge 1$, 
and $0\le\ro{red},\varepsilon_{\rm qo},\varepsilon_{\rm drel}<1$ such that there hold:
\begin{enumerate}[(1)]
\renewcommand{\theenumi}{E\arabic{enumi}}
\bf\item\rm\label{item:stability}\textbf{Stability on non-refined elements:} For all  $\TT_\coarse\in\T$ and $\TT_{\fine}\in\refine(\TT_\coarse)$, 
it holds that
\begin{align*}
|\eta_{\fine}(\TT_\coarse\cap\TT_{\fine})-\eta_\coarse(\TT_\coarse\cap\TT_{\fine})|\le\varrho_{\coarse,\fine}:=\const{\rho}\norm{\Phi_\fine-\Phi_\coarse}{H^{-1/2}(\Gamma)}.
\end{align*}
\bf\item\rm\label{item:reduction}\textbf{Reduction on refined elements:} For all  $\TT_\coarse\in\T$ and $\TT_{\fine}\in\refine(\TT_\coarse)$, 
it holds that
\begin{align*}
\eta_{\fine}(\TT_{\fine}\setminus\TT_\coarse)^2\le\ro{red}\eta_\coarse( \TT_\coarse\setminus\TT_{\fine})^2+\varrho_{\coarse,\fine}^2.
\end{align*}
\bf\item\rm\label{item:orthogonality} \textbf{General quasi-orthogonality:} It holds that
\begin{align*}
0\le\varepsilon_{\rm qo}<\sup_{\delta>0}\frac{1-(1+\delta)(1-(1-\ro{red})\theta)}{2+\delta^{-1}},
\end{align*}
and  for all $\ell,N\in\N_0$, the sequence $(\TT_\ell)_{\ell\in\N_0}$ from Algorithm~\ref{alg:bem algorithm} satisfies that
\begin{align*}
\sum_{j=\ell}^{\ell+N}(\varrho_{j,j+1}^2-\varepsilon_{\rm qo}\eta_j^2)\le\const{qo} \eta_\ell^2.
\end{align*}
\bf\item\rm \label{item:discrete reliability}\textbf{Discrete reliability:} For all $\TT_\coarse\in\T$ and all  $\TT_{\fine}\in\refine(\TT_\coarse)$, 
 there exists $\TT_\coarse\setminus\TT_{\fine}\subseteq\RR_{\coarse,\fine}\subseteq\TT_\coarse$ with
 $\#\RR_{\coarse,\fine}\le\const{ref}(\#\TT_{\fine}-\#\TT_\coarse)$ such that
\begin{align*}
\varrho_{\coarse,\fine}^2\le\varepsilon_{\rm drel}\eta_\coarse^2+\const{drel}^2\eta_\coarse(\RR_{\coarse,\fine})^2.
\end{align*}
\end{enumerate}


\subsection{Localization of the Sobolev--Slobodeckij norm}
\label{subsec:localization}
Let $\TT_\coarse\in\T$. 
In contrast to the integer-case, for $\sigma\in(0,1)$, the norm $\norm{\cdot}{H^\sigma(\Gamma)}$ is not additive in the sense that 
\begin{align*}
\norm{v}{H^\sigma(\Gamma)}^2\simeq \sum_{T\in\TT_\coarse} \norm{v}{H^\sigma(T)}^2\quad\text{for all }v\in H^\sigma(\Gamma)^D. 
\end{align*}
Although the upper bound "$\lesssim$" is in general false  (see \cite[Section~3]{cf01}),  the lower bound "$\gtrsim$" can be proved elementarily for arbitrary $v\in H^\sigma(\Gamma)^D$.
\begin{proposition}\label{prop:easy faermann}
Let $0<\sigma<1$ and $\TT_\bullet\in\T$.
Then,  \eqref{M:patch bem} implies the  existence of  a constant $C_{\rm split}'>0$ such that for any $v\in H^\sigma(\Gamma)^D$, there holds that 
\begin{align}
\sum_{T\in\TT_\bullet}\sum_{T'\in \Pi_\bullet(T)}|v|_{H^\sigma(T\cup T')}^2\le C_{\rm split}'\seminorm{v}{H^\sigma(\Gamma)}^2. 
\end{align}
The constant $C_{\rm split}'$ depends only on the constant from \eqref{M:patch bem}.\hfill$\square$
\end{proposition}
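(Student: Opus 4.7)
The plan is to exploit that the seminorm $|v|_{H^\sigma(T\cup T')}^2$ is a double integral and, since elements overlap only on null sets, it splits cleanly into four contributions: the ``diagonal'' pieces on $T\times T$ and $T'\times T'$ and the two ``cross'' pieces on $T\times T'$ and $T'\times T$. Thus
\begin{align*}
|v|_{H^\sigma(T\cup T')}^2
 = |v|_{H^\sigma(T)}^2 + |v|_{H^\sigma(T')}^2 + 2\int_T\!\int_{T'}\frac{|v(x)-v(y)|^2}{|x-y|^{d-1+2\sigma}}\,dy\,dx.
\end{align*}
The task is then to sum this over $T\in\TT_\bullet$ and $T'\in\Pi_\bullet(T)$ and show that each contribution is controlled by $|v|_{H^\sigma(\Gamma)}^2$. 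For vector-valued $v$, all estimates reduce to the scalar case by the componentwise definition of the seminorm.

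For the diagonal terms, the bounded patch assumption \eqref{M:patch bem} gives the multiplicity bound $\#\Pi_\bullet(T)\le\const{patch}$, so $\sum_T\sum_{T'\in\Pi_\bullet(T)}|v|_{H^\sigma(T)}^2\le\const{patch}\sum_T|v|_{H^\sigma(T)}^2\le\const{patch}\,|v|_{H^\sigma(\Gamma)}^2$. Swapping the order of summation and using the symmetry $T'\in\Pi_\bullet(T)\iff T\in\Pi_\bullet(T')$ (both are equivalent to $T\cap T'\neq\emptyset$) gives the same bound for the $|v|_{H^\sigma(T')}^2$ term.

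For the cross term, the key observation is that $\pi_\bullet(T)=\bigcup\Pi_\bullet(T)\subseteq\Gamma$, so for fixed $T$,
\begin{align*}
\sum_{T'\in\Pi_\bullet(T)}\int_T\!\int_{T'}\frac{|v(x)-v(y)|^2}{|x-y|^{d-1+2\sigma}}\,dy\,dx
=\int_T\!\int_{\pi_\bullet(T)}\frac{|v(x)-v(y)|^2}{|x-y|^{d-1+2\sigma}}\,dy\,dx
\le\int_T\!\int_\Gamma\frac{|v(x)-v(y)|^2}{|x-y|^{d-1+2\sigma}}\,dy\,dx,
\end{align*}
and a final summation over $T\in\TT_\bullet$ (using that $\Gamma=\bigcup\TT_\bullet$ with pairwise null overlaps) yields exactly $|v|_{H^\sigma(\Gamma)}^2$. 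Collecting the three contributions delivers the claimed estimate with $C_{\rm split}'=2\const{patch}+2$, depending only on $\const{patch}$.

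There is no genuine obstacle in this argument: the only ingredients are the measure-theoretic decomposition of the double integral, the patch multiplicity bound \eqref{M:patch bem}, and the symmetry of the neighbor relation. The upper bound ``$\lesssim$'' alluded to in the preceding discussion fails precisely because there one would need to bound the ``long-range'' contribution $\int_T\int_{\Gamma\setminus\pi_\bullet(T)}$, which is absent here; this is why only the one-sided inequality is available.
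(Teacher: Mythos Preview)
Your proof is correct and follows essentially the same approach as the paper's: both decompose $|v|_{H^\sigma(T\cup T')}^2$ into the two diagonal pieces and the cross term, bound the diagonal contributions by $\const{patch}\,|v|_{H^\sigma(\Gamma)}^2$ via \eqref{M:patch bem} and the symmetry $T'\in\Pi_\bullet(T)\iff T\in\Pi_\bullet(T')$, and bound the cross term by enlarging $\pi_\bullet(T)$ to $\Gamma$. You even arrive at the same constant $C_{\rm split}'=2(\const{patch}+1)$.
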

\begin{proof}
With the abbreviation
\begin{align}\label{eq:V notation}
V(x,y):=\frac{|v(x)-v(y)|^2}{|x-y|^{d-1+2\sigma}}\quad\text{for all }x,y\in\Gamma\text{ with }x\neq y,
\end{align}
\eqref{M:patch bem} shows that 
\begin{align*}
&\sum_{T\in\TT_\bullet}\sum_{T'\in \Pi_\bullet(T)}|v|_{H^\sigma(T\cup T')}^2= \sum_{T\in\TT_\bullet}\sum_{T'\in \Pi_\bullet(T)} \Big(|v|_{H^\sigma(T)}^2+2\int_T\int_{T'} V(x,y) dx dy+|v|_{H^\sigma(T')}^2\Big)\\
&\quad=2\sum_{T\in\TT_\bullet}\sum_{T'\in \Pi_\bullet(T)} \Big(\int_T\int_{T} V(x,y) dx dy+\int_T\int_{T'} V(x,y) dx dy\Big)
\le 2(\const{patch}+1) |v|_{H^\sigma(\Gamma)}^2.
\end{align*}
This concludes the proof.
\end{proof}
However, if one replaces the elements $T$ by some overlapping patches,  then also the converse inequality is satisfied for functions $v\in H^\sigma(\Gamma)^D$ which are $L^2$-orthogonal to the ansatz space $\XX_\coarse$.

\begin{proposition}\label{prop:faermann}
Let $0<\sigma<1$ and $\TT_\bullet\in\T$.
Then,  \eqref{M:patch bem}--\eqref{M:cent bem} and \eqref{S:unity bem} imply the  existence of  a constant $C_{\rm split}>0$ such that for any $v\in H^\sigma(\Gamma)^D$ which satisfies  that $\sprod{v}{(\Psi_{\bullet,T,j})_j}_{L^2(\Gamma)}=0$ for all $T\in\TT_\bullet$ and all $j\in\{1,\dots,\D\}$, where $\Psi_{\bullet,T,j}$ are the functions from \eqref{S:unity bem}, it holds that 
\begin{align}\label{eq:faermann}
\norm{v}{H^\sigma(\Gamma)}^2\le C_{\rm split}\sum_{T\in\TT_\bullet}\sum_{T'\in \Pi_\bullet(T)}|v|_{H^\sigma(T\cup T')}^2. 
\end{align}
The constant $C_{\rm split}$ depends only on 
 the dimension $d,\sigma$,  $\Gamma$, and  the constants from \eqref{M:patch bem}--\eqref{M:cent bem} and \eqref{S:unity bem}.
\end{proposition}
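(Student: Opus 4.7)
The plan is to bound separately the two contributions $\norm{v}{L^2(\Gamma)}^2$ and $|v|_{H^\sigma(\Gamma)}^2$ to the squared norm on the left of \eqref{eq:faermann}, using the orthogonality hypothesis in both cases. With $V(x,y)$ as in \eqref{eq:V notation}, I would decompose
\begin{align*}
|v|_{H^\sigma(\Gamma)}^2=\sum_{T\in\TT_\bullet}\int_T\!\int_{\pi_\bullet(T)}\! V(x,y)\,dx\,dy+\sum_{T\in\TT_\bullet}\int_T\!\int_{\Gamma\setminus\pi_\bullet(T)}\! V(x,y)\,dx\,dy.
\end{align*}
The near-diagonal first sum is controlled directly by the right-hand side of \eqref{eq:faermann} (with a factor depending only on \eqref{M:patch bem}). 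Hence the bulk of the work lies in the far-field sum; the centrality property \eqref{M:cent bem} furnishes the key geometric estimate $|x-y|\gtrsim\diam(T)$ for $x\in T$, $y\in\Gamma\setminus\pi_\bullet(T)$, which renders the kernel $|x-y|^{-(d-1+2\sigma)}$ integrable in $y$ with a gain in $\diam(T)$.

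For the far-field term I would use the hypothesis to subtract element-wise ``approximate means''. For each $T\in\TT_\bullet$ and each component $j$, \eqref{S:unity bem} provides a function $(\Psi_{\bullet,T,j})_j$ that is close to $1$ on its support (with $L^2$-defect governed by $\ro{unity}<1$) and satisfies $\sprod{v}{(\Psi_{\bullet,T,j})_j}_{L^2(\Gamma)}=0$. Solving this single linear constraint for an appropriate constant $\bar v_{T,j}\in\C$ yields a genuinely vanishing weighted mean of $v_j-\bar v_{T,j}$ on $\supp(\Psi_{\bullet,T,j})$, together with the bound $|\bar v_{T,j}|\lesssim \ro{unity}\,\norm{v_j}{L^2(\supp)}/|\supp|^{1/2}$. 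A patch-wise fractional Poincar\'e inequality, derived by a compactness/scaling argument from Proposition~\ref{prop:semi estimate} using \eqref{M:patch bem}--\eqref{M:shape bem} and the finite set of reference patch shapes, then gives
\begin{align*}
\norm{v_j-\bar v_{T,j}}{L^2(\pi_\bullet^{\q{supp}}(T))}^2\lesssim \diam(T)^{2\sigma}\,|v_j|_{H^\sigma(\pi_\bullet^{\q{supp}}(T))}^2.
\end{align*}
Writing $v(x)-v(y)=(v(x)-\bar v_T)-(v(y)-\bar v_{T'})+(\bar v_T-\bar v_{T'})$ inside $V(x,y)$ and estimating the difference $|\bar v_T-\bar v_{T'}|$ by chaining local seminorms along a sequence of overlapping patches connecting $T$ to $T'$, the far-field term reduces to a convergent double sum of the form $\sum_T\sum_{T'\notin\Pi_\bullet(T)}\frac{|T|\,|T'|}{\dist(T,T')^{d-1+2\sigma}}\cdot(\text{local seminorms})$, which is summable thanks to \eqref{M:patch bem}--\eqref{M:cent bem} and the resulting geometric decay.

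The $L^2$-part is treated by the splitting $\norm{v_j}{L^2(T)}^2\le 2\norm{v_j-\bar v_{T,j}}{L^2(T)}^2+2|T|\,|\bar v_{T,j}|^2$: the first term is estimated by the same patch-wise Poincar\'e inequality, and the second via the bound on $|\bar v_{T,j}|$ above, yielding after summation an inequality of the form $\norm{v}{L^2(\Gamma)}^2\le C\,\sum_T|v|_{H^\sigma(\pi_\bullet^{\q{supp}}(T))}^2+\ro{unity}^2\,\norm{v}{L^2(\Gamma)}^2$ that can be absorbed since $\ro{unity}<1$. Combining both parts and invoking \eqref{M:patch bem} one last time to pass from patch-based sums $\sum_T|v|_{H^\sigma(\pi_\bullet^{\q{supp}}(T))}^2$ to the pairwise sum $\sum_T\sum_{T'\in\Pi_\bullet(T)}|v|_{H^\sigma(T\cup T')}^2$ on the right-hand side of \eqref{eq:faermann} finishes the proof. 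The main obstacle is this last chaining/absorption step: unlike the scalar piecewise-constant setting of \cite{faer2,faer3}, where each $(\Psi_{\bullet,T,j})_j$ is exactly $\chi_T$ and gives exact element-wise zero-mean, here one only has near-zero-mean on the larger supports $\pi_\bullet^{\q{supp}}(T)$, so the constants $\bar v_T$ and $\bar v_{T'}$ must be compared along chains of overlapping patches while tracking the $\ro{unity}<1$ factor to enable the absorption.
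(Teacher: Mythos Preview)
Your treatment of the $L^2$-part is on the right track and is essentially the content of the paper's Lemma~\ref{lem:L2 to Hhalf}: orthogonality to the $\Psi_{\bullet,T,j}$, an explicit Poincar\'e-type inequality, and absorption via $\ro{unity}<1$ yield $h_T^{-2\sigma}\norm{v}{L^2(T)}^2\lesssim |v|_{H^\sigma(\pi_\bullet^{\q{supp}}(T))}^2$. The gap is in the far-field seminorm. Controlling $|\bar v_T-\bar v_{T'}|$ for \emph{arbitrary} far-apart pairs by chaining through overlapping patches produces chains whose length $m(T,T')$ is the combinatorial distance in the mesh and is not uniformly bounded; the resulting sum carries this unbounded factor, each local seminorm is hit by a mesh-dependent number of chains, and nothing in \eqref{M:patch bem}--\eqref{M:cent bem} makes it summable with a constant independent of $\TT_\bullet$. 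The paper bypasses this entirely: in the far field one uses only the crude bound $|v(x)-v(y)|^2\le 2|v(x)|^2+2|v(y)|^2$ together with Lemma~\ref{lem:Clambda} to obtain (Lemma~\ref{lem:first split})
\begin{align*}
\sum_{T\in\TT_\bullet}\int_T\int_{\Gamma\setminus\pi_\bullet(T)} V(x,y)\,dx\,dy\lesssim \sum_{T\in\TT_\bullet} h_T^{-2\sigma}\norm{v}{L^2(T)}^2,
\end{align*}
and then invokes the $L^2$-bound you already have. Chaining appears only afterwards and only over chains of fixed length $\le\q{supp}$ (Lemma~\ref{lem:patch to elements}), to pass from $|v|_{H^\sigma(\pi_\bullet^{\q{supp}}(T))}^2$ back to the pairwise sums; there it is harmless.

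A secondary issue: your patch-wise fractional Poincar\'e ``by compactness/scaling from Proposition~\ref{prop:semi estimate} and a finite set of reference patch shapes'' is not available here. The abstract framework does \emph{not} assume finitely many reference patches (that is specific to the concrete triangulation setting of Section~\ref{sec:standard applications}), and Proposition~\ref{prop:semi estimate} compares $H^\sigma$ and $H^1$ seminorms rather than providing a Poincar\'e inequality. The paper instead uses the elementary explicit inequality of Lemma~\ref{lem:Poincare}, valid on any measurable $\omega\subseteq\Gamma$, which supplies exactly the integral-mean remainder that the $\ro{unity}$-absorption consumes.
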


With this result, one can immediately construct a reliable and efficient  error estimator, namely the so-called \textit{Faermann estimator}; see Remark~\ref{rem:faermann}.
For $d=2$, the result of the proposition goes back to \cite{faer2}, where $\XX_\coarse$ is chosen as space of splines transformed via the arclength parametrization $\gamma:[a,b]\to \Gamma$ onto the one-dimensional boundary.
In the recent own works \cite{fgp}, we generalized the assertion to rational splines, where we could also drop the restriction that $\gamma$ is the arclength parametrization.
For $d=3$, \cite{faer3}  proved the result for discrete spaces which contain certain (transformed) polynomials of degree $p\in\{0,1,5,6\}$ on a curvilinear triangulation of $\Gamma$. 
Our proof of Proposition~\ref{prop:faermann} is  inspired by \cite{faer3}.
The key ingredient is the assumption \eqref{S:unity bem} which is exploited in Lemma~\ref{lem:L2 to Hhalf}.
Before proving Proposition~\ref{prop:faermann}, we provide an easy corollary which is the key ingredient for the proof of reliability \eqref{eq:reliable bem}.

\begin{corollary}\label{cor:global poincare}
Let $\TT_\bullet\in\T$. 
Then,  \eqref{M:patch bem}--\eqref{M:semi bem}  and \eqref{S:unity bem} imply the  existence of  a constant $C_{\rm rel}'>0$ such that for any $v\in H^1(\Gamma)^D$ which satisfies  that $\sprod{v}{\Psi_{\bullet,T,j}}_{L^2(\Gamma)}=0$ for all $T\in\TT_\bullet$ and all $j\in\{1,\dots,\D\}$, where $\Psi_{\bullet,T,j}$ are the functions from \eqref{S:unity bem}, it holds that 
\begin{align}\label{eq:global poincare}
\norm{v}{H^{1/2}(\Gamma)}\le C_{\rm rel}'\norm{h_\bullet^{1/2}\nabla_\Gamma v}{L^2(\Gamma)}. 
\end{align}
The constant $C_{\rm rel}'$ depends only on 
 the dimension $d$, $\Gamma$, as well as the constants from \eqref{M:patch bem}--\eqref{M:semi bem}  and \eqref{S:unity bem}.\hfill$\square$
\end{corollary}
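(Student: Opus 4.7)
The plan is to derive Corollary~\ref{cor:global poincare} by chaining the localization result of Proposition~\ref{prop:faermann} (applied with $\sigma=1/2$) with the local seminorm estimate \eqref{M:semi bem}, and then collapsing the resulting double sum back to a global integral via the mesh axioms \eqref{M:patch bem}--\eqref{M:shape bem}. Note that the orthogonality hypothesis in the corollary is exactly what Proposition~\ref{prop:faermann} requires: since only the $j$-th component of $\Psi_{\bullet,T,j}$ is nonzero by \eqref{S:unity bem}, the pairing $\sprod{v}{\Psi_{\bullet,T,j}}_{L^2(\Gamma)}$ coincides with the pairing $\sprod{v_j}{(\Psi_{\bullet,T,j})_j}_{L^2(\Gamma)}$.

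First I would invoke Proposition~\ref{prop:faermann} to obtain
\begin{align*}
\norm{v}{H^{1/2}(\Gamma)}^2 \le C_{\rm split} \sum_{T\in\TT_\bullet}\sum_{T'\in\Pi_\bullet(T)} |v|_{H^{1/2}(T\cup T')}^2 .
\end{align*}
It is essential that the full norm (not just the seminorm) appears on the left; this is the Poincar\'e-type gain that the orthogonality delivers. Next, for every pair $T\in\TT_\bullet$, $T'\in\Pi_\bullet(T)$, pick a point $z\in T\cap T'$ (nonempty by definition of $\Pi_\bullet(T)$). Then both $T$ and $T'$ contain $z$, hence $T\cup T'\subseteq \pi_\bullet(z)$, and \eqref{M:semi bem} applied componentwise yields
\begin{align*}
|v|_{H^{1/2}(T\cup T')}^2 \le |v|_{H^{1/2}(\pi_\bullet(z))}^2 \le C_{\rm semi}^2\, \diam(\pi_\bullet(z))\, |v|_{H^1(\pi_\bullet(z))}^2 .
\end{align*}

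To close the estimate, I would use the mesh axioms: since $z\in T$, every element of $\pi_\bullet(z)$ meets $T$, so $\pi_\bullet(z)\subseteq\pi_\bullet(T)$; local quasi-uniformity \eqref{M:locuni bem}, shape-regularity \eqref{M:shape bem}, and the bounded patch size \eqref{M:patch bem} then give $\diam(\pi_\bullet(z))\le \diam(\pi_\bullet(T))\lesssim h_T$. Substituting and expanding $|v|_{H^1(\pi_\bullet(T))}^2=\sum_{T''\in\Pi_\bullet(T)}\norm{\nabla_\Gamma v}{L^2(T'')}^2$, one arrives at
\begin{align*}
\norm{v}{H^{1/2}(\Gamma)}^2 \lesssim \sum_{T\in\TT_\bullet}\sum_{T''\in\Pi_\bullet(T)} h_T\,\norm{\nabla_\Gamma v}{L^2(T'')}^2 \lesssim \sum_{T''\in\TT_\bullet} h_{T''}\,\norm{\nabla_\Gamma v}{L^2(T'')}^2 ,
\end{align*}
where in the last step I use $h_T\simeq h_{T''}$ for neighbours (by \eqref{M:locuni bem} and \eqref{M:shape bem}), the symmetry of the patch relation, and the multiplicity bound $\#\Pi_\bullet(T'')\le\const{patch}$ to collapse the double sum. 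The right-hand side is precisely $\norm{h_\bullet^{1/2}\nabla_\Gamma v}{L^2(\Gamma)}^2$, which gives the claim.

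The only nontrivial step is the geometric packaging $T\cup T'\subseteq \pi_\bullet(z)$ that makes \eqref{M:semi bem} applicable; everything else is routine bookkeeping with the mesh axioms once Proposition~\ref{prop:faermann} is in hand. The real analytical content of the corollary is therefore carried entirely by Proposition~\ref{prop:faermann}, and this proof merely records how the local $H^{1/2}$ seminorms can be upgraded to a weighted $H^1$ seminorm via \eqref{M:semi bem}.
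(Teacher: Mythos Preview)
Your proof is correct and matches the paper's intended argument. The paper does not spell out a proof for this corollary (it is marked with $\square$ as an immediate consequence of Proposition~\ref{prop:faermann}), but your chain---Proposition~\ref{prop:faermann} with $\sigma=1/2$, then $T\cup T'\subseteq\pi_\bullet(z)$ for $z\in T\cap T'$, then \eqref{M:semi bem}, then collapsing via \eqref{M:patch bem}--\eqref{M:shape bem}---is precisely the mechanism the paper uses in the closely related proofs of Proposition~\ref{prop:poincare} and in Step~3 of Section~\ref{subsec:discrete reliability bem}.
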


To prove Proposition~\ref{prop:faermann}, we start with the following basic estimate, which is  proved in \cite[Lemma~8.2.4]{hackbusch} or in \cite[Lemma~5.3.1]{diss}.

\begin{lemma}\label{lem:Clambda}
For all $\lambda>0$, there is a constant $C(\lambda)>0$ such that for all  $x\in \R^d$  and all $\varepsilon>0$, there holds that
\begin{align}
\int_{\Gamma\setminus B_{\varepsilon}(x)} |x-y|^{-d+1-\lambda} dy
 \le C({\lambda}) \varepsilon ^{-\lambda}.
\end{align}
 The constant $C(\lambda)$ depends only on the parameter $\lambda$, the dimension $d$, and $\Gamma$. \hfill$\square$
\end{lemma}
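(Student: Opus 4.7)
The plan is to reduce the bound to the Ahlfors $(d-1)$-regularity of the Lipschitz surface $\Gamma$ and then exploit a dyadic decomposition of $\Gamma\setminus B_\varepsilon(x)$ into spherical shells around $x$. The first step I would carry out is to record the following measure estimate: there exists a constant $C_\Gamma>0$ (depending only on the Lipschitz constants of the charts of $\Gamma$ and the dimension $d$) such that for every $x\in\R^d$ and every $r>0$,
\begin{equation*}
\bigl|\Gamma\cap B_r(x)\bigr|\le C_\Gamma\, r^{d-1}.
\end{equation*}
This is a standard consequence of $\Gamma$ being a compact Lipschitz boundary: cover $\Gamma$ by finitely many open sets in each of which $\Gamma$ is the graph of a Lipschitz function over a hyperplane, and use that for a Lipschitz graph $\Sigma$ with constant $L$ one has $|\Sigma\cap B_r(x)|\le (1{+}L^2)^{1/2}|B_r(x)\cap H|\le C(d,L)r^{d-1}$ for every hyperplane $H$.

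With this estimate in hand, the second step is the dyadic shell decomposition. For $k\in\N_0$, set
\begin{equation*}
A_k:=\Gamma\cap\bigl(B_{2^{k+1}\varepsilon}(x)\setminus B_{2^{k}\varepsilon}(x)\bigr),
\end{equation*}
so that $\Gamma\setminus B_\varepsilon(x)=\bigcup_{k\ge 0}A_k$ (the union being eventually empty once $2^k\varepsilon$ exceeds $\diam(\Gamma)+\dist(x,\Gamma)$). On $A_k$ one has $|x-y|\ge 2^k\varepsilon$, hence
\begin{equation*}
|x-y|^{-d+1-\lambda}\le (2^k\varepsilon)^{-d+1-\lambda},
\end{equation*}
while the preceding measure estimate yields $|A_k|\le C_\Gamma\,(2^{k+1}\varepsilon)^{d-1}=2^{d-1}C_\Gamma\,(2^k\varepsilon)^{d-1}$.

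The third step is to sum the contributions. Plugging the two bounds into the integral over $A_k$ gives
\begin{equation*}
\int_{A_k}|x-y|^{-d+1-\lambda}\,dy\le 2^{d-1}C_\Gamma\,(2^k\varepsilon)^{-\lambda},
\end{equation*}
and summing the geometric series in $k$ (which converges precisely because $\lambda>0$) yields
\begin{equation*}
\int_{\Gamma\setminus B_\varepsilon(x)}|x-y|^{-d+1-\lambda}\,dy\le 2^{d-1}C_\Gamma\,\varepsilon^{-\lambda}\sum_{k=0}^{\infty}2^{-k\lambda}=\frac{2^{d-1}C_\Gamma}{1-2^{-\lambda}}\,\varepsilon^{-\lambda},
\end{equation*}
which is the claimed inequality with $C(\lambda):=2^{d-1}C_\Gamma/(1-2^{-\lambda})$.

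The only nontrivial ingredient is the uniform surface-measure bound $|\Gamma\cap B_r(x)|\lesssim r^{d-1}$ for all $x\in\R^d$ (not merely $x\in\Gamma$) and all $r>0$; once this is available, the rest is a textbook layer-potential computation. For Lipschitz boundaries this uniform bound is classical, but if one prefers to avoid invoking it as a black box, the main obstacle lies in proving it from scratch via the Lipschitz atlas of $\Gamma$: one fixes a finite cover by cylinders in which $\Gamma$ is a graph, splits $\Gamma\cap B_r(x)$ across the cover, and reduces to the flat case using the area formula (with the pushforward measure controlled by the Lipschitz constant).
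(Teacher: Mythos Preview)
Your proof is correct. The paper does not supply its own argument for this lemma but simply cites \cite[Lemma~8.2.4]{hackbusch} and \cite[Lemma~5.3.1]{diss}; the dyadic-shell computation you carry out, based on the Ahlfors $(d{-}1)$-regularity bound $|\Gamma\cap B_r(x)|\lesssim r^{d-1}$, is precisely the standard route those references take, so there is nothing to compare.
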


The following lemma is the first step towards the localization of the norm $\norm{v}{H^\sigma(\Gamma)}$ for certain functions $v\in H^\sigma(\Gamma)^D$.
In \cite[Lemma~3.1]{faer3}, this result is stated for triangular meshes.
The elementary proof extends to our situation; see also \cite[Lemma~5.3.2]{diss} for details.
\begin{lemma}\label{lem:first split}
Let  $0<\sigma<1$ and  $\TT_\bullet\in\T$.
Then, \eqref{M:cent bem} implies the existence of a constant $C>0$ such that for all $v\in H^\sigma(\Gamma)^D$, it holds that 
\begin{align}\label{eq:first split}
\norm{v}{H^\sigma(\Gamma)}^2\le 
\sum_{T\in\TT_\bullet}\sum_{T'\in\Pi_\bullet(T)}|v|_{H^\sigma(T\cup T')}^2 
+C\sum_{T\in\TT_\bullet} 
\diam(T)^{-2\sigma}\norm{v}{L^2(T)}^2.
\end{align}
The constant $C$ depends only on 
 the dimension $d$, $\sigma$, $\Gamma$, and the constant from \eqref{M:cent bem}.\hfill$\square$
\end{lemma}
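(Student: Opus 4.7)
The plan is to expand $\norm{v}{H^\sigma(\Gamma)}^2 = \norm{v}{L^2(\Gamma)}^2 + \seminorm{v}{H^\sigma(\Gamma)}^2$ and split the double integral defining the seminorm according to the mesh $\TT_\bullet$. Writing $V(x,y):=|v(x)-v(y)|^2/|x-y|^{d-1+2\sigma}$, I decompose $\TT_\bullet\times\TT_\bullet$ into the \emph{near} pairs with $T'\in\Pi_\bullet(T)$ and the \emph{far} pairs with $T'\in\TT_\bullet\setminus\Pi_\bullet(T)$. For each near pair, the single term $\int_T\int_{T'}V$ is only one of four positive summands making up $\seminorm{v}{H^\sigma(T\cup T')}^2$, so summing gives
\begin{align*}
\sum_{T\in\TT_\bullet}\sum_{T'\in\Pi_\bullet(T)}\int_T\int_{T'}V(x,y)\,dy\,dx \;\le\; \sum_{T\in\TT_\bullet}\sum_{T'\in\Pi_\bullet(T)}\seminorm{v}{H^\sigma(T\cup T')}^2,
\end{align*}
which reproduces the first term on the right-hand side of \eqref{eq:first split}.

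For the far contribution, I would use that mesh elements are essentially disjoint, so $T'\notin\Pi_\bullet(T)$ forces $T\cap T'=\emptyset$ and $\bigcup\{T':T'\notin\Pi_\bullet(T)\}=\Gamma\setminus\pi_\bullet(T)$ up to a null set. Hence the inner sum collapses to $\int_T\int_{\Gamma\setminus\pi_\bullet(T)}V(x,y)\,dy\,dx$, and $|v(x)-v(y)|^2\le 2|v(x)|^2+2|v(y)|^2$ splits this into two pieces. In the first piece, $|v(x)|^2$ is independent of $y$, and \eqref{M:cent bem} yields $|x-y|\ge \diam(T)/\const{cent}$ for every $x\in T$ and $y\in \Gamma\setminus\pi_\bullet(T)$, so Lemma~\ref{lem:Clambda} with $\lambda=2\sigma$ bounds the $y$-integral by $\lesssim \diam(T)^{-2\sigma}$, producing the summand $\diam(T)^{-2\sigma}\norm{v}{L^2(T)}^2$.

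The delicate step is the symmetric second piece $\int_T\int_{\Gamma\setminus\pi_\bullet(T)}|v(y)|^2|x-y|^{-(d-1+2\sigma)}\,dy\,dx$, where $|v(y)|^2$ sits at the far point. Here I would swap summation and integration to
\begin{align*}
\int_\Gamma |v(y)|^2\bigg(\sum_{T\in\TT_\bullet\,:\,y\notin\pi_\bullet(T)}\int_T|x-y|^{-(d-1+2\sigma)}\,dx\bigg)dy,
\end{align*}
and for fixed $y$ select any $T^*_y\in\TT_\bullet$ containing $y$. Then $y\notin\pi_\bullet(T)$ is equivalent to $T\cap T^*_y=\emptyset$, i.e., $T\subseteq\Gamma\setminus\pi_\bullet(T^*_y)$ up to a null set; applying \eqref{M:cent bem} to $T^*_y$ gives $|x-y|\ge\diam(T^*_y)/\const{cent}$ uniformly. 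A second application of Lemma~\ref{lem:Clambda} bounds the bracketed $x$-integral by $\lesssim\diam(T^*_y)^{-2\sigma}$, and integrating $|v(y)|^2$ element-by-element yields $\sum_{T^*\in\TT_\bullet}\diam(T^*)^{-2\sigma}\norm{v}{L^2(T^*)}^2$.

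To finish, the $L^2$-part of the norm is absorbed by $\norm{v}{L^2(\Gamma)}^2\le \diam(\Gamma)^{2\sigma}\sum_{T\in\TT_\bullet}\diam(T)^{-2\sigma}\norm{v}{L^2(T)}^2$. Collecting the three bounds gives \eqref{eq:first split} with a constant depending only on $d$, $\sigma$, $\Gamma$ (via Lemma~\ref{lem:Clambda}), and $\const{cent}$, as claimed. The main technical obstacle is the bookkeeping in the symmetric far piece; the choice of $T^*_y$ and the second invocation of \eqref{M:cent bem} — now around $y$ rather than around $x$ — are what make the estimate symmetric in $T$ and $T'$ without any further regularity of the mesh.
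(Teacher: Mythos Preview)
Your argument is correct and follows exactly the standard decomposition that the paper attributes to \cite[Lemma~3.1]{faer3} (and \cite[Lemma~5.3.2]{diss}): split the double integral into near pairs $T'\in\Pi_\bullet(T)$ and far pairs, bound the near part trivially by the patch seminorms, and control the far part via \eqref{M:cent bem} together with Lemma~\ref{lem:Clambda}. The symmetric treatment of the second far piece by fixing $T^*_y\ni y$ and applying \eqref{M:cent bem} to $T^*_y$ is precisely the expected step, and the absorption of $\norm{v}{L^2(\Gamma)}^2$ via $\diam(T)\le\diam(\Gamma)$ is standard; the paper itself omits the details and only cites these references.
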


It remains to control the second summand in \eqref{eq:first split}. 
To this end, we need the following elementary Poincar\'e type inequality of \cite[Lemma~2.5]{faer2}.
\begin{lemma}\label{lem:Poincare}
For any  $\sigma\in(0,1)$ and any measurable $\omega\subseteq \Gamma$,  there holds for all $v\in H^\sigma(\omega)$ that
\begin{align}\label{eq:SS poincare}
\norm{v}{L^2(\omega)}^2\le \frac{\diam(\omega)^{d-1+2\sigma}}{2|\omega|}|v|_{H^\sigma(\omega)}^2+\frac{1}{|\omega|}\left|\int_\omega v(x) dx \right|^2.
\end{align}\hfill$\square$
\end{lemma}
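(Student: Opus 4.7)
The plan is to split $v$ into its mean value on $\omega$ and a zero-mean remainder. I would set $a := \frac{1}{|\omega|}\int_\omega v(x)\,dx$. Orthogonality of $v - a$ against the constant $a$ in $L^2(\omega)$ gives
\[
\norm{v}{L^2(\omega)}^2 = \norm{v - a}{L^2(\omega)}^2 + |\omega|\,|a|^2,
\]
where the second summand is exactly $\frac{1}{|\omega|}\bigl|\int_\omega v(x)\,dx\bigr|^2$, matching the second term on the right-hand side of \eqref{eq:SS poincare}.

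For the first summand, the key step is the symmetrization identity
\[
\int_\omega |v(x) - a|^2\,dx = \frac{1}{2|\omega|}\int_\omega\int_\omega |v(x) - v(y)|^2\,dy\,dx.
\]
This follows by expanding $|v(x) - v(y)|^2 = |(v(x) - a) - (v(y) - a)|^2$, integrating over $\omega \times \omega$, and noting that the mixed term vanishes because $\int_\omega (v - a)\,dx = 0$; the two remaining diagonal terms contribute equally and yield $2|\omega|\int_\omega |v - a|^2\,dx$.

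Finally, I would insert the trivial pointwise bound $1 \le \diam(\omega)^{d-1+2\sigma}/|x - y|^{d-1+2\sigma}$, valid for $x,y\in\omega$ with $x\neq y$, inside the double integral to introduce the Sobolev--Slobodeckij weight, obtaining
\[
\int_\omega\int_\omega |v(x) - v(y)|^2\,dy\,dx \le \diam(\omega)^{d-1+2\sigma}\,|v|_{H^\sigma(\omega)}^2.
\]
Combining the three displays yields \eqref{eq:SS poincare}. There is no real obstacle here; this is essentially a fractional Poincar\'e inequality on $\omega$, and the only mild bookkeeping is to handle complex-valued $v$ correctly when expanding the squared modulus, which is routine.
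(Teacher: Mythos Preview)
Your proof is correct and complete. The paper does not supply its own argument for this lemma; it simply cites \cite[Lemma~2.5]{faer2} and marks the statement with a $\square$, so your elementary derivation via the mean-value split, the symmetrization identity, and the trivial bound $|x-y|\le\diam(\omega)$ is exactly the kind of standard proof one would expect behind that citation.
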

We start to estimate the  second summand in \eqref{eq:first split}.
\begin{lemma}\label{lem:L2 to Hhalf}
Let   $\sigma\in (0,1)$,  $\TT_\bullet\in\T$ and $T\in\TT_\bullet$.
Then, \eqref{M:patch bem}--\eqref{M:shape bem} and \eqref{S:unity bem} imply the existence of  a constant $C>0$ such that for all $v\in H^\sigma(\Gamma)^D$ with $\sprod{v_j}{\Psi_{\bullet,T,j}}_{L^2(\Gamma)}=0$ 
for all $j\in\{1,\dots,\D\}$, 
where $\Psi_{\bullet,T,j}$ are the functions from \eqref{S:unity bem}, it holds that
\begin{align}\label{eq:L2 to Hhalf}
\norm{h_\bullet^{-\sigma} v}{L^2(T)}
\le C |v|_{H^\sigma(\pi_\bullet^{\q{supp}}(T))},
\end{align}
where $\q{supp}$ is the constant from \eqref{S:unity bem}.
The constant $C$ depends only on 
the dimension $d$,  $\sigma$, $\Gamma$,  and the constants from  \eqref{M:patch bem}--\eqref{M:shape bem} and \eqref{S:unity bem}.
\end{lemma}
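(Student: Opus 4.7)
Set $\omega := \supp(\Psi_{\bullet,T,j})$ for the fixed element $T$ and index $j$. The inclusion in \eqref{S:unity bem} yields $T\subseteq\omega\subseteq\pi_\bullet^{\q{supp}}(T)$, so by iterating the bounded-patch assumption \eqref{M:patch bem} and invoking local quasi-uniformity \eqref{M:locuni bem} and shape-regularity \eqref{M:shape bem}, we obtain the scaling
\[
|T|\le|\omega|\lesssim|T|,\qquad \diam(T)\le\diam(\omega)\lesssim\diam(T)\simeq h_T,
\]
where the constants depend only on $\const{patch},\const{locuni},\const{shape}$ and $\q{supp}$. Consequently $\diam(\omega)^{d-1+2\sigma}/|\omega|\simeq h_T^{2\sigma}$.

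The key step is to bound the mean value of $v_j$ on $\omega$. Writing $1=(1-(\Psi_{\bullet,T,j})_j)+(\Psi_{\bullet,T,j})_j$ and invoking the orthogonality assumption $\sprod{v_j}{(\Psi_{\bullet,T,j})_j}_{L^2(\Gamma)}=0$ (which by the support condition reduces to an integral over $\omega$), we obtain
\[
\int_\omega \overline{v_j}\,dx=\int_\omega \overline{v_j}\bigl(1-(\Psi_{\bullet,T,j})_j\bigr)\,dx.
\]
Cauchy--Schwarz combined with the approximation-of-unity bound in \eqref{S:unity bem} then gives
\[
\Bigl|\int_\omega v_j\,dx\Bigr|^2\le \norm{v_j}{L^2(\omega)}^2\,\norm{1-(\Psi_{\bullet,T,j})_j}{L^2(\omega)}^2\le \ro{unity}^{\,2}\,|\omega|\,\norm{v_j}{L^2(\omega)}^2.
\]

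Next, I plug this into the Poincar\'e-type estimate of Lemma~\ref{lem:Poincare} applied to $v_j\in H^\sigma(\omega)$:
\[
\norm{v_j}{L^2(\omega)}^2\le \frac{\diam(\omega)^{d-1+2\sigma}}{2|\omega|}|v_j|_{H^\sigma(\omega)}^2+\ro{unity}^{\,2}\,\norm{v_j}{L^2(\omega)}^2.
\]
Since $\ro{unity}<1$, the last term can be absorbed into the left-hand side, yielding
\[
\norm{v_j}{L^2(\omega)}^2\le \frac{\diam(\omega)^{d-1+2\sigma}}{2(1-\ro{unity}^{\,2})|\omega|}|v_j|_{H^\sigma(\omega)}^2\lesssim h_T^{2\sigma}\,|v_j|_{H^\sigma(\pi_\bullet^{\q{supp}}(T))}^2,
\]
using the scaling established above and the inclusion $\omega\subseteq\pi_\bullet^{\q{supp}}(T)$.

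Finally, since $\norm{v_j}{L^2(T)}\le\norm{v_j}{L^2(\omega)}$ and $h_\bullet|_T=h_T$, summing the squared inequalities over $j\in\{1,\dots,D\}$ and invoking the componentwise definition of the Sobolev--Slobodeckij seminorm gives \eqref{eq:L2 to Hhalf}. The only delicate point in the argument is the insertion of $1=(1-(\Psi_{\bullet,T,j})_j)+(\Psi_{\bullet,T,j})_j$ to couple the orthogonality with the approximation-of-unity property and thereby control the mean value on $\omega$; once this is in place the remainder is routine bookkeeping using \eqref{M:patch bem}--\eqref{M:shape bem}.
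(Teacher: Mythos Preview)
Your proof is correct and follows essentially the same approach as the paper: apply the Poincar\'e-type inequality of Lemma~\ref{lem:Poincare} on $\omega=\supp(\Psi_{\bullet,T,j})$, use the orthogonality together with the approximation-of-unity bound from \eqref{S:unity bem} and Cauchy--Schwarz to control the mean-value term, absorb via $\ro{unity}<1$, and then scale using \eqref{M:patch bem}--\eqref{M:shape bem}. The only difference is cosmetic ordering (you establish the scaling $\diam(\omega)^{d-1+2\sigma}/|\omega|\simeq h_T^{2\sigma}$ first, whereas the paper does it last).
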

\begin{proof}
We prove \eqref{eq:L2 to Hhalf} for each component $v_j$ of $v$, where $j\in\{1,\dots,D\}$.
Then, squaring and summing up all components, we conclude the proof.
\eqref{S:unity bem} and Lemma~\ref{lem:Poincare}   show that 
\begin{align}\label{eq:T to psiT}
\begin{split}
&\norm{v_j}{L^2(T)}^2\le \norm{v_j}{L^2(\supp( \Psi_{\bullet,T,j}))}^2 \\
&\le \frac{\diam (\supp( \Psi_{\bullet,T,j}))^{d-1+2\sigma}}{2|\supp(\Psi_{\bullet,T,j})|}|v_j|^2_{H^\sigma(\supp( \Psi_{\bullet,T,j}))}+\frac{1}{|\supp( \Psi_{\bullet,T,j})|} \left|\int_{\supp (\Psi_{\bullet,T,j})} v_j(x) dx\right|^2.
\end{split}
\end{align}
Now, we apply  the orthogonality  and \eqref{S:unity bem} to get for the second summand that 
\begin{align*}
&\frac{1}{|\supp(\Psi_{\bullet,T,j})|}\left|\int_{\supp(\Psi_{\bullet,T,j})} v_j(x) dx\right|^2 =\frac{1}{|\supp(\Psi_{\coarse,T,j})|} \left|\int_{\supp(\Psi_{\coarse,T,j})} \overline v_j(x) (1-\Psi_{\coarse,T,j}(x)) dx\right|^2\\
&\quad\le \frac{1}{|\supp(\Psi_{\coarse,T,j})|} \norm{v_j}{L^2(\supp(\Psi_{\coarse,T,j}))}^2 \norm{1-(\Psi_{\coarse,T,j})_j}{L^2(\supp(\Psi_{\coarse,T,j}))}^2\le \ro{unity}^2 \norm{v_j}{L^2(\supp(\Psi_{\coarse,T,j})}^2.
\end{align*}
Inserting this in \eqref{eq:T to psiT} gives that 
\begin{align}\label{eq:L2 to Hhalf nearly}
(1-\ro{unity}^2)\norm{v_j}{L^2(\supp(\Psi_{\coarse,T,j}))}^2\le \frac{\diam(\supp(\Psi_{\coarse,T,j}))^{d-1+2\sigma}}{2|\supp(\Psi_{\coarse,T,j})|} |v_j|_{H^\sigma(\supp(\Psi_{\coarse,T,j}))}^2.
\end{align}
With \eqref{S:unity bem} and \eqref{M:patch bem}--\eqref{M:shape bem}, we see that
$\diam(\supp(\Psi_{\coarse,T,j}))\le \diam(\pi_\bullet^{\q{supp}}(T))\lesssim \diam(T)\simeq h_T$.
Further, \eqref{S:unity bem} 
implies that $|\supp(\Psi_{\coarse,T,j})|\ge  |T|= h_T^{d-1}$.
Inserting this in  \eqref{eq:L2 to Hhalf nearly} and using again \eqref{S:unity bem}, we derive that
\begin{align*}
\norm{v_j}{L^2(T)}^2\le\norm{v_j}{L^2(\supp(\Psi_{\coarse,T,j}))}^2\lesssim h_T^{2\sigma} |v_j|_{H^\sigma(\supp(\Psi_{\coarse,T,j}))}^2\le  h_T^{2\sigma} |v_j|_{H^\sigma(\pi_\bullet^{\q{supp}}))}^2.
\end{align*}
Altogether, this concludes the proof.
\end{proof}

The following lemma allows us to further estimate the term $|v|_{H^\sigma(\pi_\bullet^{\q{supp}}(T))}$ of \eqref{eq:L2 to Hhalf}.
\begin{lemma}\label{lem:patch to elements}
Let  $q\in\N_0$ and  $\TT_\bullet\in\T$.
Then,  \eqref{M:patch bem}--\eqref{M:cent bem}   imply the existence of a constant $C(q)>0$ such that for all $v\in H^\sigma(\Gamma)^D$ and all $T\in\TT_\bullet$ there holds that
\begin{align}\label{eq:patch2patch}
|v|_{H^\sigma(\pi_\bullet^q(T))}^2\le C(q) \sum_{T',T''\in \Pi_\bullet^q(T)\atop T'\cap T''\neq\emptyset} |v|_{H^\sigma( T'\cup T'')}^2.
\end{align}
The constant depends only on the dimension $d, \sigma,q,$
 and  the constants from \eqref{M:patch bem}--\eqref{M:cent bem}.
\end{lemma}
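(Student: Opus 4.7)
\textbf{Proof plan for Lemma~\ref{lem:patch to elements}.} The natural starting point is to decompose the double integral defining the seminorm elementwise:
\begin{align*}
|v|_{H^\sigma(\pi_\bullet^q(T))}^2 = \sum_{T',T''\in\Pi_\bullet^q(T)}\int_{T'}\int_{T''}\frac{|v(x)-v(y)|^2}{|x-y|^{d-1+2\sigma}}\,dx\,dy.
\end{align*}
For the \emph{touching} pairs $T'\cap T''\neq\emptyset$, each double integral is trivially bounded by $|v|_{H^\sigma(T'\cup T'')}^2$ and already appears on the right-hand side of \eqref{eq:patch2patch}. So the real work is to control the contribution of every \emph{non-touching} pair $T'\cap T''=\emptyset$ by a constant multiple of a sum of touching-pair seminorms.

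The key geometric observations are the following. First, by iterating \eqref{M:patch bem}, the patch $\pi_\bullet^q(T)$ contains at most $C(q)$ elements, and any two of them can be connected by a touching chain $T'=S_0,S_1,\dots,S_k=T''$ with $S_j\in\Pi_\bullet^q(T)$ and $S_j\cap S_{j+1}\neq\emptyset$ of length $k\le k_0(q)$ (e.g.\ one may always route through $T$ itself, using that every element in $\Pi_\bullet^q(T)$ is connected to $T$ by a chain of length $\le q$). Second, for a non-touching pair $T'\cap T''=\emptyset$, \eqref{M:cent bem} gives $T''\subseteq\Gamma\setminus\pi_\bullet(T')$ and hence $\dist(T',T'')\ge\diam(T')/\const{cent}$; together with \eqref{M:locuni bem} applied along the chain to $T$, all relevant quantities obey $\diam(S_j)\simeq\diam(T)$, $|S_j|\simeq\diam(T)^{d-1}$, and $|x-y|\simeq\diam(T)$ for $x\in T'$, $y\in T''$. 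Consequently the kernel satisfies $|x-y|^{-(d-1+2\sigma)}\lesssim\diam(T)^{-(d-1+2\sigma)}\lesssim|z_j-z_{j+1}|^{-(d-1+2\sigma)}$ for any $z_j\in S_j,z_{j+1}\in S_{j+1}$, because $|z_j-z_{j+1}|\le\diam(S_j\cup S_{j+1})\lesssim\diam(T)$.

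With these tools the estimate is a direct telescoping argument. For $x\in T'$, $y\in T''$, and any $z_j\in S_j$ ($j=1,\dots,k-1$), Cauchy--Schwarz yields
\begin{align*}
|v(x)-v(y)|^2\le k\Big(|v(x)-v(z_1)|^2+\sum_{j=1}^{k-2}|v(z_j)-v(z_{j+1})|^2+|v(z_{k-1})-v(y)|^2\Big).
\end{align*}
Averaging this inequality in each $z_j$ over $S_j$ (dividing by $|S_j|$), then multiplying by $|x-y|^{-(d-1+2\sigma)}$ and integrating over $x\in T'$, $y\in T''$, each summand on the right can be handled by Fubini: for the first term one first integrates $y$ out using $\int_{T''}|x-y|^{-(d-1+2\sigma)}\,dy\lesssim\diam(T)^{-2\sigma}$, and the resulting kernel $(|S_1|\diam(T)^{2\sigma})^{-1}\simeq\diam(T)^{-(d-1+2\sigma)}$ is $\lesssim|x-z_1|^{-(d-1+2\sigma)}$ since $|x-z_1|\lesssim\diam(T)$; this gives the bound $|v|_{H^\sigma(T'\cup S_1)}^2$. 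Analogously, the intermediate and the last term yield $|v|_{H^\sigma(S_j\cup S_{j+1})}^2$ and $|v|_{H^\sigma(S_{k-1}\cup T'')}^2$, respectively. Summing over the at most $k_0(q)$ chain links, then over the at most $C(q)^2$ pairs $(T',T'')$, produces the desired bound with $C(q)$ depending only on $d,\sigma,q$, and the constants of \eqref{M:patch bem}--\eqref{M:cent bem}.

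The main obstacle is bookkeeping: verifying that all distances, volumes, and kernels in the averaged telescoping identity remain comparable to the corresponding powers of $\diam(T)$, so that the weight $|x-y|^{-(d-1+2\sigma)}$ can be systematically replaced by the kernels $|z_j-z_{j+1}|^{-(d-1+2\sigma)}$ associated with the touching links of the chain. This is exactly where \eqref{M:cent bem} (to obtain the crucial lower bound on $|x-y|$) interacts with \eqref{M:patch bem}--\eqref{M:locuni bem} (to control the chain length and equilibrate the diameters and volumes of the elements in $\Pi_\bullet^q(T)$).
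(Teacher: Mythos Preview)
Your proposal is correct and follows essentially the same strategy as the paper: decompose $|v|_{H^\sigma(\pi_\bullet^q(T))}^2$ into pairwise contributions, connect any two elements of $\Pi_\bullet^q(T)$ by a chain of touching elements, and control each cross term by averaging over intermediate chain elements, using \eqref{M:cent bem} for the lower bound $|x-y|\gtrsim\diam(T)$ and \eqref{M:patch bem}--\eqref{M:shape bem} to equilibrate all diameters and volumes.

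The organisation differs slightly. The paper proceeds by induction on the chain length: assuming the estimate for $|v|_{H^\sigma(T_0\cup\cdots\cup T_{m-1})}^2$, it adds $T_m$ and bounds the new cross term $\int_{T_m}\int_{T_0\cup\cdots\cup T_{m-2}}V(x,y)\,dx\,dy$ by averaging a single intermediate point $z\in T_{m-1}$; this step uses that $T_m$ is disjoint from $T_0,\dots,T_{m-2}$, so the paper works with \emph{simple} chains ($T_i\cap T_j=\emptyset$ for $|i-j|>1$). Your direct telescoping over the whole chain at once, averaging simultaneously in all intermediate $z_j\in S_j$, is arguably cleaner and does not require the chain to be simple---any sequence of consecutively touching elements suffices. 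Both routes give the same dependence of $C(q)$ on the data.
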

\begin{proof}
Without loss of generality, we may assume that $D=1$.
We prove the assertion in two steps.

{\bf Step 1:}
Let   $T_{0},T_{1},\dots,T_{m}$  be a \textit{chain} of elements  in $\Pi_\bullet^q(T)$ with $T_{i}\cap T_{j}=\emptyset$ for $|i-j|>1$ and  $T_{i}\cap T_{j}\neq \emptyset$ if $|i-j|=1$, where $1\le m \le q$.
We set $T_i^j:=\bigcup_{k=i}^j T_{\ell}$ for $i\le j$ and prove by induction on $m$ that there exists a constant $C_1(m)>0$ which depends only on $d,\sigma,q,m,$ and \eqref{M:locuni bem}--\eqref{M:cent bem},  such that
\begin{align}\label{eq:chain estimate}
|v|_{H^\sigma(T_0^m)}^2\le C_1(m) \sum_{i=0}^{m-1} |v|_{H^\sigma(T_{i}\cup T_{i+1})}^2.
\end{align}
For $m=1$, \eqref{eq:chain estimate} with $C_1(1)=1$ even holds with equality.
Thus,  the induction hypothesis  reads:
For all $1\le m-1< q$ and for any chain $T_{0},\dots,T_{m-1}$ of elements  in $\Pi_\bullet^q(T)$, it holds that 
\begin{align}\label{eq:chain estimate hypothesis}
|v|_{H^\sigma(T_0^{m-1})}^2 \le C_1(m-1) \sum_{i=0}^{m-2} |v|_{H^\sigma(T_{i}\cup T_{i+1})}^2.
\end{align}
Let $T_{m}\in\Pi_\bullet^q(T)$ with $T_{m}\cap T_{i}=\emptyset$ for $i\le m-2$ and $T_{m}\cap T_{i}\neq \emptyset$ for $i=m-1$.
For all $x,y\in\Gamma,x\neq y$, we abbreviate $V(x,y):=\frac{|v(x)-v(y)|^2}{|x-y|^{d-1+2\sigma}}$.
The definition \eqref{eq:SS-norm} of the Sobolev-Slobodeckij seminorm shows that 
\begin{align*}
|v|_{H^\sigma({T_0^m})}^2&=\int_{T_0^m}\int_{T_0^m}V(x,y) dx dy\\
&=\int_{T_0^{m-1}}\int_{T_0^{m-1}}V(x,y) dx dy+\int_{T_m}\int_{T_m} V(x,y) dx dy+2\int_{T_m}\int_{T_0^{m-1}}V(x,y) dx dy\\
&=|v|_{H^\sigma(T_0^{m-1})}^2+|v|_{H^\sigma(T_{m})}^2+2\int_{T_m}\int_{T_0^{m-2}} V(x,y) dx dy+2\int_{T_m}\int_{T_{m-1}} V(x,y) dx dy\\
&\le |v|_{H^\sigma(T_0^{m-1})}^2+|v|_{H^{\sigma}(T_{m-1}\cup T_m)}^2+2\int_{T_{m}}\int_{T_0^{m-2}}V(x,y) dx dy.
\end{align*}
With the induction hypothesis \eqref{eq:chain estimate hypothesis}, it remains to estimate $\int_{T_m}\int_{T_0^{m-2}} V(x,y) dx dy$.
First, we note that for $x\in T_0^{m-2},y\in T_{m}, z\in T_{m-1}$, it holds that
\begin{align}\label{eq:U triangle1}
V(x,y)&=\frac{|v(x)-v(y)|^2}{|x-y|^{d-1+2\sigma}}\le 2\frac{|v(x)-v(z)|^2}{|x-y|^{d-1+2\sigma}}+2\frac{|v(z)-v(y)|^2}{|x-y|^{d-1+2\sigma}}.
\end{align}
Moreover, \eqref{M:cent bem} shows  that $|x-y|\ge \dist(T_m,\Gamma\setminus\pi_\bullet(T_m))\gtrsim \diam({T_m})$.
Since $x,y,z\in T_0^m$, \eqref{M:locuni bem} shows $\max\{|x-z|,|y-z|\}\lesssim \diam({T_m})$.
Hence, we can proceed the estimate of \eqref{eq:U triangle1} 
\begin{align*}
V(x,y)\lesssim V(x,z)+V(z,y).
\end{align*}
This implies that
\begin{align*}
&\int_{T_m} \int_{T_0^{m-2}}V(x,y) dx dy=\frac{1}{|T_{m-1}|} \int_{T_{m-1}}\int_{T_m}\int_{T_0^{m-2}}V(x,y) dx dy dz\\
&\quad\lesssim \frac{1}{|T_{m-1}|} \int_{T_{m-1}}\int_{T_m}\int_{T_0^{m-2}} V(x,z) +V(y,z) dx dy dz\\
&\quad=\frac{1}{|T_{m-1}|} \left(\int_{T_{m-1}}\int_{T_0^{m-2}} |T_{m}| V(x,z) dx dz +\int_{T_{m-1}}\int_{T_{m-1}} |T_0^{m-2}| V(y,z) dy dz\right)\\
&\quad\le \frac{\max\{|T_{m}|,|T_0^{m-2}|\}}{|T_{m-1}|} \Big(|v|_{H^\sigma(T_0^{m-1})}^2+|v|_{H^\sigma(T_{m-1}\cup T_m)}^2\Big).
\end{align*}
Note that $ \max\{|T_{m}|,|T_0^{m-2}|\}/{|T_{m-1}|} \lesssim 1$ by \eqref{M:locuni bem}--\eqref{M:shape bem}.
Together with  the induction hypothesis \eqref{eq:chain estimate hypothesis}, this concludes the induction step.

{\bf Step 2:}
We come to the assertion itself.
By definition, we have that
\begin{align*}
|v|_{H^{1/2}(\pi_\bullet^q(T))}^2=\sum_{\widetilde T',\widetilde T''\in\Pi_\bullet^q(T)}\int_{\widetilde T'}\int_{\widetilde T''} V(x,y) dx dy.
\end{align*}
Let  $\widetilde T', \widetilde T''\in\Pi_\bullet^q(T)$.
First, we suppose that    $\widetilde T'\neq \widetilde T''=\emptyset$.
Then, there exists a chain as in Step~1 with $\widetilde T'=T_0$ and $\widetilde T''=T_m$.
Step 1 proves that 
\begin{align*}
\int_{\widetilde T'}\int_{\widetilde T''}V(x,y) dx dy\le |v|_{H^\sigma(T_0^m)}^2\lesssim \sum_{  T', T''\in \Pi_\bullet^q(T)\atop  T'\cap  T''\neq \emptyset} |v|_{H^\sigma( T'\cup T'')}^2.
\end{align*}
If $\widetilde T'= \widetilde T''$, the same estimate holds true. Since the number of $\widetilde T',\widetilde T''\in\Pi_\bullet^q(T)$ is uniformly bounded by a constant, which depends only on the constant of \eqref{M:patch bem} and $q$, this  estimate concludes the proof.
\end{proof}

With the property~\eqref{M:semi bem}, one immediately derives the following Poincar\'e inequality.
\begin{proposition}\label{prop:poincare}
Let    $\TT_\bullet\in\T$ and $T\in\TT_\bullet$.
Then, \eqref{M:patch bem}--\eqref{M:semi bem}  and \eqref{S:unity bem} imply the existence of  a constant $\const{poinc}>0$ such that for all $v\in H^1(\Gamma)^D$ which satisfy  that $\sprod{v}{\Psi_{\bullet,T,j}}_{L^2(\Gamma)}=0$  for all $j\in\{1,\dots,\D\}$, where $\Psi_{\bullet,T,j}$ are the functions from \eqref{S:unity bem}, it holds that
\begin{align}\label{eq:poincare}
\norm{h_\bullet^{-1}v}{L^2(T)}
\le \const{poinc}| v|_{H^1\big(\pi^{\q{supp}+1}_\bullet(T)\big)},
\end{align}
where $\q{supp}$ is the constant from \eqref{S:unity bem}.
The constant $\const{poinc}$ depends only on the dimension $d$, $\Gamma$, and  the constants from \eqref{M:patch bem}--\eqref{M:semi bem}  and \eqref{S:unity bem}.
\end{proposition}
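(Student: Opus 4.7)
The plan is to chain together the $H^{1/2}$-Poincaré-type bound from Lemma~\ref{lem:L2 to Hhalf} with the seminorm scaling from \eqref{M:semi bem}, obtaining the missing $h_T^{1/2}$-factor that upgrades the right-hand side from the $H^{1/2}$-seminorm on a patch to the $H^1$-seminorm on a slightly larger patch.

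\textbf{Step 1 (apply Lemma~\ref{lem:L2 to Hhalf} with $\sigma = 1/2$).} Since $v$ is assumed to be $L^2$-orthogonal to $\Psi_{\bullet,T,j}$ for every $j$, and since only the $j$-th component of $\Psi_{\bullet,T,j}$ is non-zero by \eqref{S:unity bem}, the hypothesis of Lemma~\ref{lem:L2 to Hhalf} is satisfied componentwise. Applied with $\sigma=1/2$, it gives
\begin{align*}
\norm{h_\bullet^{-1/2} v}{L^2(T)}^2 \lesssim |v|_{H^{1/2}(\pi_\bullet^{\q{supp}}(T))}^2.
\end{align*}

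\textbf{Step 2 (reduce to pairs of neighbouring elements).} Lemma~\ref{lem:patch to elements} with $q = \q{supp}$ yields
\begin{align*}
|v|_{H^{1/2}(\pi_\bullet^{\q{supp}}(T))}^2 \le C(\q{supp}) \sum_{\substack{T',T''\in \Pi_\bullet^{\q{supp}}(T) \\ T'\cap T''\neq\emptyset}} |v|_{H^{1/2}(T'\cup T'')}^2.
\end{align*}

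\textbf{Step 3 (scale each pair via \eqref{M:semi bem}).} For each such pair, pick any $z\in T'\cap T''$. Then $T'\cup T''\subseteq \pi_\bullet(z)$, and $\pi_\bullet(z)\subseteq \pi_\bullet^{\q{supp}+1}(T)$ because $z$ lies in an element of $\pi_\bullet^{\q{supp}}(T)$. Iterated application of \eqref{M:locuni bem} together with \eqref{M:shape bem} yields $\diam(\pi_\bullet(z))\lesssim h_T$ with a constant depending on $\q{supp}$. Hence \eqref{M:semi bem} gives
\begin{align*}
|v|_{H^{1/2}(T'\cup T'')}^2 \le |v|_{H^{1/2}(\pi_\bullet(z))}^2
\le \const{semi}^2\,\diam(\pi_\bullet(z))\,|v|_{H^1(\pi_\bullet(z))}^2
\lesssim h_T\,|v|_{H^1(\pi_\bullet^{\q{supp}+1}(T))}^2.
\end{align*}

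\textbf{Step 4 (assemble).} Since the number of pairs $(T',T'')$ above is uniformly bounded by a constant depending only on \eqref{M:patch bem} and $\q{supp}$, summing and combining Steps~1--3 yields
\begin{align*}
h_T^{-1}\norm{v}{L^2(T)}^2 = \norm{h_\bullet^{-1/2}v}{L^2(T)}^2 \lesssim h_T\,|v|_{H^1(\pi_\bullet^{\q{supp}+1}(T))}^2,
\end{align*}
which after rearranging is exactly \eqref{eq:poincare}. No step is really an obstacle; the only care needed is the bookkeeping which forces the $+1$ in $\pi_\bullet^{\q{supp}+1}(T)$, coming from passing from elements in $\Pi_\bullet^{\q{supp}}(T)$ to their vertex-patches via \eqref{M:semi bem}.
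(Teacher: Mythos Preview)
Your proof is correct and follows essentially the same route as the paper: apply Lemma~\ref{lem:L2 to Hhalf} with $\sigma=1/2$, decompose the patch seminorm via Lemma~\ref{lem:patch to elements}, and then upgrade each $H^{1/2}$-piece to $H^1$ through the point-patch estimate~\eqref{M:semi bem}, which supplies the extra $h_T^{1/2}$ and forces the enlargement to $\pi_\bullet^{\q{supp}+1}(T)$. The only cosmetic difference is that the paper keeps track of the individual patches $\pi_\bullet(z(T',T''))\subseteq\pi_\bullet(T')$ before passing to $\pi_\bullet^{\q{supp}+1}(T)$, whereas you go there directly; both are fine.
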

\begin{proof}
We  apply Lemma~\ref{lem:L2 to Hhalf} and  Lemma~\ref{lem:patch to elements} to see that

\begin{align*}
\norm{h_\coarse^{-1/2}v}{L^2(T)}^2\lesssim|v|_{H^{1/2}(\pi_\bullet^{\q{supp}}(T))}^2\lesssim \sum_{T',T''\in \Pi_\bullet^{\q{supp}}(T)\atop T'\cap T''\neq\emptyset} |v|_{H^{1/2}( T'\cup T'')}^2.
\end{align*}
For $T',T''\in\TT_\coarse$ with $T'\cap T''\neq\emptyset$, we fix some point $z(T',T'')\in T'\cap T''$.
With {\rm \eqref{M:semi bem}},
we continue our estimate
\begin{align*}
\norm{h_\coarse^{-1/2}v}{L^2(T)}^2&\lesssim|v|_{H^{1/2}(\pi_\bullet^{\q{supp}}(T))}^2\lesssim \sum_{T',T''\in \Pi_\bullet^{\q{supp}}(T)\atop T'\cap T''\neq\emptyset} |v|_{H^{1/2}( \pi_\bullet(z(T',T''))}^2\\
&\lesssim \sum_{T',T''\in \Pi_\bullet^{\q{supp}}(T)\atop T'\cap T''\neq\emptyset}\diam\big(\pi_\bullet(z(T',T'')\big) \norm{\nabla_\Gamma v}{L^2( \pi_\bullet(T'))}^2.
\end{align*}
\eqref{M:patch bem}--\eqref{M:shape bem} imply that  $h_T\simeq h_\bullet$ on $\pi^{\q{supp}+1}_\bullet(T)$, and that the last term of the latter estimate can be bounded from above (up to a multiplicative constant) by $ \norm{h_\bullet^{1/2}\nabla_\Gamma v}{L^2\big(\pi^{\q{supp}+1}_\bullet(T)\big)}^2$. 
 This concludes the proof.
\end{proof}

With  all the preparations, we can  finally prove  the main result of this section.
\begin{proof}[Proof of Proposition~\ref{prop:faermann}]
Together with \eqref{M:shape bem}, Lemma \ref{lem:first split}   proves that 
\begin{align*}
\norm{v}{H^\sigma(\Gamma)}^2\lesssim
\sum_{T\in\TT_\bullet}\sum_{T'\in \Pi_\bullet(T)}|v|_{H^\sigma(T\cup T')}^2
+\sum_{T\in\TT_\bullet} h_T^{-2\sigma} \norm{v}{L^2(T)}^2.
\end{align*}
It remains to estimate the second sum.
With Lemma \ref{lem:L2 to Hhalf} and Lemma \ref{lem:patch to elements}, we see that 
\begin{align}\label{eq:L2 to Hhalf nonlocal}
\sum_{T\in\TT_\bullet} h_T^{-2\sigma} \norm{v}{L^2(T)}^2\lesssim \sum_{T\in\TT_\bullet} |{v}|_{H^\sigma(\pi_\bullet^{\q{supp}}(T))}^2\lesssim \sum_{T\in\TT_\bullet}\sum_{T',T''\in \Pi_\bullet^{\q{supp}}(T)\atop T'\cap T''\neq\emptyset} |{v}|_{H^\sigma( T'\cup T'')}^2.
\end{align}
If $T\in\TT_\bullet$ and $T',T''\in\Pi_\bullet^{\q{supp}}(T)$ with $T'\cap T''\neq\emptyset$, then $T\in\Pi_\bullet^{\q{supp}}(T')$ and $T''\in\Pi_\bullet(T')$.
Plugging this into \eqref{eq:L2 to Hhalf nonlocal} shows that 
\begin{align*}
\sum_{T\in\TT_\bullet} h_T^{-2\sigma} \norm{v}{L^2(T)}^2\lesssim \sum_{T'\in\TT_\bullet}\sum_{T\in \Pi_\bullet^{\q{supp}}(T')}\sum_{T''\in\Pi_\bullet(T')}|{v}|_{H^\sigma( T'\cup T'')}^2,
\end{align*}
and $\#\Pi_\bullet^{\q{supp}}(T')\lesssim 1$ (see \eqref{M:patch bem}) concludes the proof.
\end{proof}

\subsection{Reliability~(\ref{eq:reliable bem})}
\label{subsec:reliability bem}
Let $\TT_\coarse\in\T$. 
Recall that $\mathfrak{V}:H^{-1/2}(\Gamma)^D\to H^{1/2}(\Gamma)^D$ is an isomorphism.
Due to Galerkin orthogonality \eqref{eq:galerkin bem},  Corollary~\ref{cor:global poincare} leads to
\begin{align}\label{eq:faermann1}
\norm{\phi-\Phi_\coarse}{H^{-1/2}(\Gamma)}&\simeq \norm{f-\mathfrak{V}\Phi_\coarse}{H^{1/2}(\Gamma)}\lesssim\norm{h_\coarse^{1/2}\nabla_\Gamma(f-\mathfrak{V}\Phi_\coarse)}{L^2(\Gamma)}=\eta_\coarse.
\end{align}
\begin{remark}\label{rem:faermann}
Proposition~\ref{prop:easy faermann} and Proposition \ref{prop:faermann} show that 
\begin{align}\label{eq:faerest releff}
\norm{\phi-\Phi_\coarse}{H^{-1/2}(\Gamma)}^2\simeq\norm{f-\mathfrak{V}\Phi_\coarse}{H^{1/2}(\Gamma)}^2\simeq \sum_{T\in\TT_\bullet}\sum_{T'\in \Pi_\bullet(T)}|f-\mathfrak{V}\Phi_\coarse|_{H^{1/2}(T\cup T')}^2.
\end{align}
This is even true for arbitrary $f\in H^{1/2}(\Gamma)^D$ without the additional restriction $f\in H^1(\Gamma)^D$.
In particular, 
\begin{align}
\digamma_{\hspace{-1mm} \coarse}(T)^2:=\sum_{T'\in \Pi_\bullet(T)}|f-\mathfrak{V}\Phi_\coarse|_{H^{1/2}(T\cup T')}^2\quad\text{for all }T\in\TT_\coarse
\end{align}
  provides a local error indicator.
The corresponding error estimator $\digamma_{\hspace{-1mm} \coarse}$ is  often referred to as {\rm Faermann estimator}. 
In BEM, it is the only known estimator which is  reliable and efficient (without further assumptions as, e.g., the saturation assumption \cite[Section~1]{leite}).
Obviously, one could replace the residual estimator $\eta_\ell$ in Algorithm~\ref{alg:bem algorithm} by $\digamma_{\hspace{-1mm} \ell}$.
However, due to the lack of an $h$-weighting factor, it is unclear whether the reduction property \eqref{item:reduction} of Section~\ref{subsec:reliability bem} is satisfied.
\cite[Theorem~7]{faerconv} proves at least plain convergence of $\digamma_{\hspace{-1mm} \ell}$ even for $f\in H^{1/2}(\Gamma)^D$ if one uses piecewise constants on affine triangulations of $\Gamma$ as ansatz space.
The proof immediately extends to our current situation, where the assumptions \eqref{M:patch bem}--\eqref{M:semi bem}, \eqref{R:union bem}--\eqref{R:reduction bem}, and \eqref{S:inverse bem}--\eqref{S:nestedness bem} are employed.
The key ingredient is the construction of an equivalent mesh-size function $\widetilde h_\coarse\in L^\infty(\Gamma)$ which is contractive on each element which touches a refined element, i.e., there exists a uniform constant $0<\ro{ctr}<1$ such that 
\begin{align}
\widetilde h_\fine|_T \le \ro{ctr}\widetilde h_\coarse|_T\quad \text{for all } \TT_\fine\in\refine(\TT_\coarse) \text{ and all }T\in\Pi_\coarse(\TT_\coarse\setminus\TT_\fine).
\end{align}
The existence of such a mesh-size function is proved in \cite[Section~8.7]{axioms} for shape-regular triangular meshes.
The proof works  verbatim for the present setting.
\end{remark}

\subsection{Convergence of $\boldsymbol{\norm{\Phi_{\ell+1}-\Phi_{\ell}}{H^{-1/2}(\Gamma)}}$}\label{subsec:perturbations bem}
Nestedness~\eqref{S:nestedness bem} ensures that $\XX_\infty:=\overline{\bigcup_{\ell\in\N_0}\XX_\ell}$ is a closed subspace of $H^{-1/2}(\Gamma)^D$ and hence admits a unique Galerkin solution $\Phi_\infty\in\XX_\infty$. Note that $\Phi_\ell$ is also a Galerkin approximation of $\Phi_\infty$. Hence, the C\'ea lemma~\eqref{eq:cea bem} with $\phi$ replaced by $\Phi_\infty$  
 proves that $\norm{\Phi_\infty-\Phi_\ell}{H^{-1/2}(\Gamma)}\to0$ as $\ell\to\infty$. 
 In particular, we obtain that  $\lim_{\ell\to\infty}\norm{\Phi_{\ell+1}-\Phi_{\ell}}{H^{-1/2}(\Gamma)}=0$.

\subsection{An inverse inequality for $\mathfrak{V}$}
In Proposition~\ref{prop:invest for V}, we establish an inverse inequality for the single-layer operator $\mathfrak{V}$.
Throughout this section, the ellipticity of $\mathfrak{V}$ is not exploited  (and we can drop this assumption here).
For the Laplace operator $\mathfrak{P}=-\Delta$, such an estimate was already proved in \cite[Theorem~3.1]{fkmp} for 
shape-regular triangulations of a polyhedral boundary $\Gamma$.
Independently, \cite{gantumur} derived a similar result for globally smooth $\Gamma$ and arbitrary self-adjoint and elliptic boundary integral operators. 
In \cite[Theorem~3.1]{invest},  \cite[Theorem~3.1]{fkmp} is generalized to piecewise polynomial ansatz functions on shape-regular curvilinear triangulations. 
In particular, our Proposition~\ref{prop:invest for V} does not only extend these results to arbitrary general meshes as in Section~\ref{subsec:boundary discrete bem}, but  is also completely novel for, e.g., linear elasticity.
The proof follows the lines of \cite[Section~4]{invest}.
We start with the following lemma, which was proved in \cite[Theorem~4.1]{averaging} on shape-regular triangulations.
With Lemma~\ref{lem:Poincare}, the proof immediately extends to our situation; see also \cite[Lemma~5.3.11]{diss}.

\begin{lemma}\label{lem:cp41}
For $\TT_\coarse\in\T$, let $\PP^{0}(\TT_\coarse)^D\subset L^2(\Gamma)^D$ be the set of all functions whose $D$ components are $\TT_\coarse$-piecewise constant functions on $\Gamma$.
Let $P_\coarse:L^2(\Gamma)^D\to \PP^{0}(\TT_\coarse)^D$ be the corresponding $L^2$-projection.
Then, \eqref{M:patch bem} and \eqref{M:shape bem} imply for arbitrary $0<\sigma<1$ the existence of a constant $C>0$ such that
\begin{align}\label{eq:cp41}
\norm{(1-P_\coarse)\psi}{H^{-\sigma}(\Gamma)}\le C\norm{h_\coarse^\sigma\psi}{L^2(\Gamma)}\quad\text{for all }\psi\in L^2(\Gamma).
\end{align}
The constant $C$ depends only on the dimension $D$, the boundary $\Gamma$, $\sigma$,  and the constants from 
\eqref{M:shape bem}.\hfill$\square$
\end{lemma}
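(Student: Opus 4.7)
The natural approach is duality followed by an elementwise Poincaré argument. Starting from
\[
\norm{(1-P_\coarse)\psi}{H^{-\sigma}(\Gamma)}
= \sup_{v\in H^\sigma(\Gamma)^D,\,\norm{v}{H^\sigma(\Gamma)}=1}\dual{v}{(1-P_\coarse)\psi},
\]
I would exploit that $\psi$ and $(1-P_\coarse)\psi$ lie in $L^2(\Gamma)^D$, so the duality reduces to the $L^2$-inner product (Gelfand triple identification from Section~\ref{subsec:sobolev}), and that $P_\coarse$ is self-adjoint with respect to that inner product. This moves the projection off $\psi$ and onto the test function $v$, after which a weighted Cauchy--Schwarz inequality yields
\[
|\dual{v}{(1-P_\coarse)\psi}|
\le \norm{h_\coarse^{-\sigma}(1-P_\coarse)v}{L^2(\Gamma)}\,\norm{h_\coarse^\sigma\psi}{L^2(\Gamma)}.
\]
It therefore suffices to prove the Poincaré-type estimate $\norm{h_\coarse^{-\sigma}(1-P_\coarse)v}{L^2(\Gamma)} \lesssim \norm{v}{H^\sigma(\Gamma)}$.

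This last step, which is the main content of the lemma, is carried out element-by-element. On each fixed $T\in\TT_\coarse$ the $L^2$-projection acts componentwise as averaging, $(P_\coarse v)|_T = |T|^{-1}\int_T v\,dx$, so Lemma~\ref{lem:Poincare} applied to $\omega=T$ gives
\[
\norm{(1-P_\coarse)v}{L^2(T)}^2
\le \frac{\diam(T)^{d-1+2\sigma}}{2|T|}\,\seminorm{v}{H^\sigma(T)}^2.
\]
Shape-regularity \eqref{M:shape bem} combined with $|T|=h_T^{d-1}$ bounds the prefactor by a constant multiple of $h_T^{2\sigma}$. Dividing by $h_T^{2\sigma}$, summing over all $T\in\TT_\coarse$, and using the trivial inequality $\sum_{T\in\TT_\coarse}\seminorm{v}{H^\sigma(T)}^2 \le \seminorm{v}{H^\sigma(\Gamma)}^2$ (obtained by restricting the domain of integration in the double integral defining the Sobolev--Slobodeckij seminorm, compare Proposition~\ref{prop:easy faermann}, which is where assumption~\eqref{M:patch bem} enters if one wants the patch-wise version) yields
\[
\norm{h_\coarse^{-\sigma}(1-P_\coarse)v}{L^2(\Gamma)}^2
\lesssim \seminorm{v}{H^\sigma(\Gamma)}^2
\le \norm{v}{H^\sigma(\Gamma)}^2.
\]
Combining this with the weighted Cauchy--Schwarz bound and taking the supremum over admissible $v$ delivers the claim, with a constant depending only on $D$, $\Gamma$, $\sigma$, and $\const{shape}$.

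The argument is essentially routine; the one place where care is needed is the duality manipulation at the outset, since the bracket $\dual{\cdot}{\cdot}$ is complex-antilinear in the first argument (compare the definitions in Section~\ref{subsec:sobolev}), and one must verify that the $L^2$-self-adjointness of $P_\coarse$ transfers correctly under the conjugation conventions so that the projection can indeed be shifted from $\psi$ to $v$. Beyond that, the vector-valued setting is handled by applying the scalar estimate to each of the $D$ components and summing, which is harmless under \eqref{M:patch bem}--\eqref{M:shape bem}.
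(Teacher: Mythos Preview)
Your proof is correct and follows precisely the route the paper indicates: the paper does not spell out a proof but points to Lemma~\ref{lem:Poincare} as the key ingredient, and your duality-plus-elementwise-Poincar\'e argument is exactly how that hint unfolds. One small simplification: the inequality $\sum_{T\in\TT_\coarse}\seminorm{v}{H^\sigma(T)}^2\le\seminorm{v}{H^\sigma(\Gamma)}^2$ is immediate from restricting the double integral and does not require \eqref{M:patch bem}, which is consistent with the paper's statement that the constant depends only on \eqref{M:shape bem}.
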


In contrast to \cite{invest}, we cannot use the Caccioppoli type inequality from \cite[Lemma~5.7.1]{morrey} which is only shown for the Poisson problem there.
Therefore, we prove the following generalization.
For an open set $O\subset\R^d$ and an arbitrary $u\in H^2(O)$, we abbreviate $\seminorm{u}{H^1(O)}:=\norm{\nabla u}{L^2(O)}$ and $\seminorm{u}{H^2(O)}:=(\sum_{i=1}^d\seminorm{\partial_i u}{H^1(O)}^2)^{1/2}$.

\begin{lemma}\label{lem:cacc}
Let $r>0$, $x\in\R^d$, and $u\in H^1(B_{2r}(x))^D$ be a weak solution of $\mathfrak{P}u=0$.
Then, $u|_{B_r(x)}\in C^\infty(B_r(x))^D$ and there exists a constant $C>0$ such that 
\begin{align}\label{eq:cacc}
|u|_{H^2(B_r(x))}\le C\big(\norm{u}{L^2(B_{2r}(x))} + \frac{1+r+r^2}{r}|u|_{H^1(B_{2r}(x))}\big).
\end{align}
The constant $C$ depends only on the dimensions $d,\D,$ and the coefficients of the partial differential operator $\mathfrak{P}$.
\end{lemma}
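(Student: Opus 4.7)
This is a second-order Caccioppoli estimate for a constant-coefficient strongly-elliptic system, so the standard ``differentiate the equation, test against a cut-off squared, absorb by Young'' scheme should work, with care taken to track the polynomial $r$-dependence coming from the lower-order coefficients $b_i$ and $c$.

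First, the interior smoothness $u\in C^\infty(B_r(x))^D$ is a consequence of hypoellipticity: since $\mathfrak{P}$ has constant coefficients and the principal symbol $\sum A_{ii'}\xi_i\xi_{i'}$ is strongly (Legendre--Hadamard) elliptic, Nirenberg's difference-quotient argument, iterated together with Gårding's inequality, bootstraps the a priori regularity $u\in H^1$ to $u\in H^k_{\mathrm{loc}}$ for every $k$; alternatively, invoke the smoothness of the fundamental solution $G$ away from the origin. This also justifies manipulations below.

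Next, the quantitative bound. Without loss of generality $x=0$. Choose $\eta\in C_c^\infty(B_{2r})$ with $0\le\eta\le 1$, $\eta\equiv 1$ on $B_r$ and $|\nabla\eta|\le C/r$. Because the coefficients of $\mathfrak{P}$ are constant, each $v_k:=\partial_k u$ again satisfies $\mathfrak{P}v_k=0$ weakly on $B_{2r}$. Test this identity against $\eta^2 \overline{v_k}$: expanding $\partial_i(\eta^2 \overline{v_k})=\eta^2\overline{\partial_i v_k}+2\eta(\partial_i\eta)\overline{v_k}$ isolates the principal term $I_0:=\int \eta^2\sum A_{ii'}\partial_{i'}v_k\cdot\overline{\partial_i v_k}\,dx$ on the left. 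For constant-coefficient strongly-elliptic systems, Plancherel gives Gårding in the sharp form $c_0\norm{\nabla w}{L^2(\R^d)}^2\le\mathrm{Re}\int\sum A_{ii'}\partial_{i'}w\cdot\overline{\partial_i w}$ for compactly supported $w$; apply this to $w=\eta v_k$ (extended by zero) and expand $\nabla(\eta v_k)=\eta\nabla v_k+v_k\otimes\nabla\eta$. Together with the tested equation, one obtains
\begin{align*}
c_0\norm{\nabla(\eta v_k)}{L^2}^2
\le \bigl|\text{cut-off cross terms}\bigr|
+ \Bigl|\int\sum b_i\partial_i v_k\cdot\eta^2\overline{v_k}\Bigr|
+ \Bigl|\int c\,v_k\cdot\eta^2\overline{v_k}\Bigr|.
\end{align*}
The cross terms are bounded by $(C/r)\norm{\eta\nabla v_k}{L^2}\norm{v_k}{L^2(B_{2r})}$; the $b$-term by $\norm{b}{}\norm{\eta\nabla v_k}{L^2}\norm{v_k}{L^2(B_{2r})}$; the $c$-term by $\norm{c}{}\norm{v_k}{L^2(B_{2r})}^2$. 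Young's inequality applied with an absorbing parameter absorbs $\norm{\eta\nabla v_k}{L^2}^2\lesssim\norm{\nabla(\eta v_k)}{L^2}^2+r^{-2}\norm{v_k}{L^2(B_{2r})}^2$ into the left, giving $\norm{\nabla v_k}{L^2(B_r)}^2\lesssim(r^{-2}+\norm{b}{}^2+\norm{c}{})\norm{v_k}{L^2(B_{2r})}^2$. Summing over $k$ converts the left into $|u|_{H^2(B_r)}^2$ and the right into a combination of $|u|_{H^1(B_{2r})}^2$ and, after a parallel first-order Caccioppoli applied to $u$ itself (testing $\mathfrak{P}u=0$ against $\eta^2\overline{u}$), of $\norm{u}{L^2(B_{2r})}^2$. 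Collecting the polynomial in $r$ in the form $(1+r\norm{b}{}+r^2\norm{c}{})^2/r^2\le C\bigl((1+r+r^2)/r\bigr)^2$ absorbs $\norm{b}{},\norm{c}{}$ into $C$ and yields the claimed inequality.

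The main technical point is choosing the Young weights so that every $\norm{\eta\nabla v_k}{L^2}^2$ appearing on the right can indeed be absorbed into the left before inverting, and simultaneously tracking the $r$-dependence of the surviving terms. Equivalently, one may perform the scaling $\tilde u(y):=u(ry)$, which replaces $(A,b,c)$ by $(A,rb,r^2c)$, apply a unit-scale Caccioppoli on $B_1\subset B_2$, and scale back; the polynomial factor $(1+r+r^2)/r$ then appears directly from the scaling. The constant $c_0$ in Gårding (hence the final $C$) depends only on the Legendre--Hadamard constant of $\{A_{ii'}\}$ and on $d,D$, as required.
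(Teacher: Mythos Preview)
Your approach is correct and differs substantially from the paper's route. The paper does not test against cut-offs directly; instead it rescales to the reference ball via $\tilde u(y)=u(x+ry)$ on $B_{3/2}(0)$, observes that $\tilde u-\lambda$ (for any constant $\lambda\in\R^D$) solves the principal-part equation with a right-hand side $\tilde f$ built from the rescaled lower-order terms (each picking up a factor $r$ or $r^2$), invokes McLean's black-box interior regularity estimate (Theorem~4.16 there) on $B_1\subset B_{3/2}$, scales back, and finally chooses $\lambda$ as the integral mean so that Poincar\'e controls $\|u-\lambda\|_{L^2}$ by $|u|_{H^1}$. The $\|u\|_{L^2}$ contribution in the statement arises in that argument only from bounding $|\lambda|$. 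Your direct cut-off plus G{\aa}rding-via-Plancherel argument is more self-contained (no appeal to McLean's theorem) and in fact delivers the slightly sharper bound $|u|_{H^2(B_r)}\lesssim r^{-1}(1+r+r^2)\,|u|_{H^1(B_{2r})}$ without any $\|u\|_{L^2}$ term, so the ``parallel first-order Caccioppoli'' you mention is not needed to reach the stated inequality. The scaling alternative you sketch in your final paragraph is essentially the paper's proof.
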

\begin{proof}
By \cite[Theorem~4.16]{mclean}, there holds  that $u|_{B_{3r/2}(x)}\in H^k(B_{3r/2}(x))^D$ for all $k\in\N_0$, and the Sobolev embedding theorem proves that $u|_{B_{3r/2}(x)}\in C^\infty(B_{3r/2}(x))^D$.
In particular, $u$ is a strong solution of $\mathfrak{P}u=0$ on ${B_{3r/2}(x)}$.
To prove \eqref{eq:cacc}, let $\lambda\in\R^{\D}$ be an arbitrary constant vector, and  define $\widetilde{u}:=u\circ\varphi$ with the affine bijection $\varphi:{B_{3/2}(0)}\to{B_{3r/2}(x)}$, $\varphi(\widetilde y)=r\widetilde y+x$ for $\widetilde y\in B_{3/2}(0)$.
Since the coefficients of  $\mathfrak{P}$ are constant and $u$ is a strong solution, there holds for all $\widetilde y\in{B_{3/2}(0)}$ with $y:=\varphi(\widetilde y)$ that
\begin{align}\label{eq:PDE on reference}
\begin{split}
-\sum_{i=1}^d\sum_{i'=1}^d \partial_i(A_{ii'}\partial_{i'}  (\widetilde{u}-\lambda))(\widetilde{y})&=-\sum_{i=1}^d\sum_{i'=1}^d \partial_{i}(A_{ii'}\partial_{i'} (u-\lambda))(y) \,r^2 \\
&= -r^2\Big(\sum_{i=1}^d b_i\partial_i (u-\lambda)(y)+ c (u-\lambda)(y)+c\,\lambda\Big).
\end{split}
\end{align}
We  define the right-hand side as $\widetilde f\in C^\infty({B_{3/2}(0)})$, i.e.,
\begin{align*}
\widetilde f(\widetilde y):= -r^2\Big(\sum_{i=1}^d b_i\partial_i (u-\lambda)(\varphi(\widetilde y))+ c (u-\lambda)(\varphi(\widetilde y))+c\,\lambda\Big).
\end{align*}
This shows that $\widetilde u-\lambda$ is a strong  (and thus weak) solution of a \textit{strongly elliptic} (see Section~\ref{subsec:model problem bem}) system of second-order PDEs with smooth coefficients and smooth right-hand side.
The application of \cite[Theorem~4.16]{mclean} yields the existence of a constant $C_1>0$, which depends only on $d,\D,$ and the coefficients of the matrices $A_{ii'}$,  such that
\begin{align}\label{eq:mycacc help}
|\widetilde u-\lambda|_{H^2(B_1(0))}\le C_1\big(\norm{\widetilde u-\lambda}{H^1(B_{3/2}(0))} +\norm{\widetilde f}{L^2(B_{3/2}(0))}\big).
\end{align}
Standard scaling arguments prove that
\begin{align*}
|\widetilde u-\lambda|_{H^2(B_1(0))}&\simeq \frac{r^2}{r^{d/2}}\, |u|_{H^2(B_r(x))}, \\
 \norm{\widetilde u-\lambda}{L^2(B_{3/2}(0))}&\simeq \frac{1}{r^{d/2}} \norm{u-\lambda}{L^2(B_{3r/2}(x))}, \\
\seminorm{\widetilde u-\lambda}{H^1(B_{3/2}(0))}&\simeq \frac{r}{r^{d/2}} \seminorm{u}{H^1(B_{3r/2}(x))},\\
\norm{\widetilde f}{L^2(B_{3/2}(0))}&\lesssim \frac{r^2}{r^{d/2}}\seminorm{u}{H^1(B_{3r/2}(x))}+\frac{r^2}{r^{d/2}}\norm{u-\lambda}{L^2(B_{3r/2}(x))}+r^2|\lambda|.
\end{align*}
Plugging this into \eqref{eq:mycacc help}, we obtain that
\begin{align}\label{eq:mycacc help2}
|u|_{H^2(B_r(x))}\lesssim\Big(\frac{1+r^2}{r^2}\norm{ u-\lambda}{L^2(B_{3r/2}(x))} +\frac{1+r}{r}| u|_{H^1(B_{3r/2}(x))}+r^{d/2}|\lambda|\Big).
\end{align}
We choose $\lambda$ as the integral mean $\lambda:=\int_{B_{3r/2}(x)} u(y) dy/|B_{3r/2}(x)|$.
The Cauchy--Schwarz inequality implies that 
\begin{align*}
|\lambda|\lesssim {\norm{u}{L^1(B_{3r/2}(x))}}/{|B_{3r/2}(x)|}\le \norm{u}{L^2(B_{3r/2}(x))}/{|B_{3r/2}(x)|}^{1/2}\simeq  r^{-d/2}\norm{u}{L^2(B_{3r/2}(x))}.
\end{align*}
Using this and the Poincar\'e inequality in \eqref{eq:mycacc help2}, we see that
\begin{align*}
|u|_{H^2(B_r(x))}\lesssim\Big(\norm{u}{L^2(B_{3r/2}(x))} +\frac{1+r+r^2}{r}| u|_{H^1(B_{3r/2}(x))}\Big).
\end{align*}
Together with  the fact that $B_{3r/2}(x)\subset B_{2r}(x)$, this concludes the proof.
\end{proof}


For the proof of the next proposition, we need the linear and continuous \textit{single-layer potential} from \cite[Theorem~6.11]{mclean}
\begin{align}\label{eq:single layer potential}
\widetilde{\mathfrak{V}}:H^{-1/2}(\Gamma)^D\to H^1(U)^D,
\end{align}
where $U$ is an arbitrary  bounded domain with $\Gamma\subset U$.
The single-layer operator $\mathfrak{V}$ is just the trace of $\mathfrak{\widetilde V}$, i.e., 
\begin{align}\label{eq:trace of V}
\mathfrak{V}=\widetilde{\mathfrak{V}}(\cdot)|_{\Gamma}:H^{-1/2}(\Gamma)^D\to H^{1/2}(\Gamma)^D;
\end{align}
see \cite[page 219--220]{mclean}.
Indeed, for $\psi\in L^\infty(\Gamma)$, \cite[page 201--202]{mclean} states the following integral representation
\begin{align}\label{eq:single layer potential integral}
(\widetilde{\mathfrak{V}}\psi)(x)=\int_{\Gamma} G(x-y) \psi(y) \,dy\quad\text{for all }x\in U.
\end{align}

\begin{proposition}\label{prop:invest for V}
Suppose \eqref{M:patch bem}--\eqref{M:semi bem}.
For $\TT_\bullet\in\T$, let $w_\bullet\in L^\infty(\Gamma)$ be a weight function which satisfies for some $\alpha>0$ and all $T\in\TT_\bullet$ that 
\begin{align}\label{eq:alpha-admissible}
\norm{w_\bullet}{L^\infty(T)}\le\alpha w_\bullet(x)\quad\text{for almost all }x\in\pi_\bullet(T).
\end{align}
Then, there exists a constant $\const{inv,\mathfrak{V}}>0$ such that for all $\psi\in L^2(\Gamma)^D$, it holds that  
\begin{align}\label{eq:invest general for V}
\norm{w_\bullet\nabla_\Gamma \mathfrak{V}\psi}{L^2(\Gamma)}\le \const{inv,\mathfrak{V}} \big(\norm{w_\bullet/h_\bullet^{1/2}}{L^\infty(\Gamma)}\norm{\psi}{H^{-1/2}(\Gamma)}+\norm{w_\bullet\psi}{L^2(\Gamma)}\big).
\end{align}
The constant $\const{inv,\mathfrak{V}}$ depends only on \eqref{M:patch bem}--\eqref{M:semi bem}, $\Gamma$,  the coefficients of $\mathfrak{P}$, and the admissibility constant $\alpha$.
The particular choice $w_\bullet=h_\bullet^{1/2}$ shows that
\begin{align}\label{eq:invest for V}
\norm{h_\bullet^{1/2}\nabla_\Gamma \mathfrak{V}\psi}{L^2(\Gamma)}\le \const{inv,\mathfrak{V}} \big(\norm{\psi}{H^{-1/2}(\Gamma)}+\norm{h_\bullet^{1/2}\psi}{L^2(\Gamma)}\big).
\end{align}
\end{proposition}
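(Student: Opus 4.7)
The plan is to adapt the argument of \cite[Section~4]{invest} by lifting $\mathfrak{V}\psi$ into a fixed bounded open neighborhood $U\supset\Gamma$ via the single-layer potential $\widetilde{\mathfrak{V}}$ from \eqref{eq:single layer potential}, since $\mathfrak{V}\psi=\widetilde{\mathfrak{V}}\psi|_\Gamma$ by \eqref{eq:trace of V}, and then appealing to the Caccioppoli inequality (Lemma~\ref{lem:cacc}) on balls of radius $\simeq h_T$ where the lifted function solves $\mathfrak{P}u=0$. For each $T\in\TT_\bullet$ I choose $x_T\in T$ and $r_T=c\,h_T$ with $c>0$ small enough, using \eqref{M:cent bem} and \eqref{M:shape bem}, that $\Gamma\cap B_{2r_T}(x_T)\subseteq\pi_\bullet(T)$ and $B_{2r_T}(x_T)\subseteq U$; shape regularity then guarantees that the balls $\{B_{2r_T}(x_T)\}_{T\in\TT_\bullet}$ have uniformly bounded overlap. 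I split $\psi=\psi_T^{\rm n}+\psi_T^{\rm f}$ with $\psi_T^{\rm n}:=\psi\,\chi_{\Gamma\cap B_{2r_T}(x_T)}$, so that $u_T^{\rm f}:=\widetilde{\mathfrak{V}}\psi_T^{\rm f}$ satisfies $\mathfrak{P}u_T^{\rm f}=0$ in $B_{2r_T}(x_T)$.

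The near part is handled by the mapping property $\mathfrak{V}:L^2(\Gamma)^D\to H^1(\Gamma)^D$ obtained from \eqref{eq:single layer operator} with $\sigma=1/2$: this yields $\norm{\nabla_\Gamma\mathfrak{V}\psi_T^{\rm n}}{L^2(T)}\lesssim\norm{\psi}{L^2(\Gamma\cap B_{2r_T}(x_T))}$, and \eqref{eq:alpha-admissible} absorbs $\norm{w_\bullet}{L^\infty(T)}$ into $w_\bullet$ pointwise on the patch, so that finite overlap sums these contributions to $\norm{w_\bullet\psi}{L^2(\Gamma)}$. For the far part I bound the surface gradient by the bulk gradient $\norm{\nabla u_T^{\rm f}}{L^2(T)}$, apply a local trace inequality on $B_{r_T}(x_T)$, and use Lemma~\ref{lem:cacc} (with $1+r+r^2\lesssim 1$ since $r_T\lesssim\diam(\Gamma)$) to obtain $\norm{\nabla u_T^{\rm f}}{L^2(T)}^2\lesssim r_T^{-1}\norm{u_T^{\rm f}}{H^1(B_{2r_T}(x_T))}^2$. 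Writing $u_T^{\rm f}=\widetilde{\mathfrak{V}}\psi-\widetilde{\mathfrak{V}}\psi_T^{\rm n}$ and using the global continuity $\widetilde{\mathfrak{V}}:H^{-1/2}(\Gamma)^D\to H^1(U)^D$ from \eqref{eq:single layer potential}, the contribution of $\widetilde{\mathfrak{V}}\psi$ sums via finite overlap to $\norm{\psi}{H^{-1/2}(\Gamma)}^2$, while the weight factor $\norm{w_\bullet}{L^\infty(T)}^2/r_T$ is controlled pointwise by $\norm{w_\bullet/h_\bullet^{1/2}}{L^\infty(\Gamma)}^2$, producing the first summand of \eqref{eq:invest general for V}.

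The main obstacle is the local estimate of the correction $\widetilde{\mathfrak{V}}\psi_T^{\rm n}$ appearing in the far-part decomposition: the trivial continuity bound $\norm{\widetilde{\mathfrak{V}}\psi_T^{\rm n}}{H^1(U)}\lesssim\norm{\psi_T^{\rm n}}{H^{-1/2}(\Gamma)}\le\norm{\psi_T^{\rm n}}{L^2(\Gamma)}$ is too weak by a factor of $r_T^{1/2}$, and this missing factor is exactly what separates $\norm{w_\bullet\psi}{L^2(\Gamma)}$ from the weaker $\norm{w_\bullet/h_\bullet^{1/2}}{L^\infty(\Gamma)}\norm{\psi}{L^2(\Gamma)}$. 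Closing this gap requires either Lemma~\ref{lem:cp41} combined with the support localization of $\psi_T^{\rm n}$ to a single patch (trading the $L^2$-bound for an $h^{1/2}$-weighted $H^{-1/2}$-bound after subtracting off a piecewise-constant mean), or a direct analysis of the kernel representation \eqref{eq:single layer potential integral} inside $B_{2r_T}(x_T)$ using the known decay of the fundamental solution $G$ of $\mathfrak{P}$ to recover the extra $r_T^{1/2}$. Once this sharpened bound is in place, the admissibility \eqref{eq:alpha-admissible} and the finite-overlap property assemble all contributions into \eqref{eq:invest general for V}, and the choice $w_\bullet=h_\bullet^{1/2}$ specializes to \eqref{eq:invest for V}.
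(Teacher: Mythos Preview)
Your approach is essentially the paper's, and you have correctly identified the crucial difficulty: the near-field correction $\widetilde{\mathfrak{V}}\psi_T^{\rm n}$ in the far-part decomposition is too weak by a factor $r_T^{1/2}$ if one uses only the global continuity $\widetilde{\mathfrak{V}}:H^{-1/2}\to H^1$. The paper closes this gap by combining \emph{both} of your proposed fixes: it splits $\psi_T^{\rm n}=P_\bullet\psi_T^{\rm n}+(1-P_\bullet)\psi_T^{\rm n}$, handles the second summand via Lemma~\ref{lem:cp41} and the global continuity of $\widetilde{\mathfrak{V}}$, and for the piecewise-constant first summand carries out the direct kernel analysis using $|\nabla_x G(x-y)|\lesssim|x-y|^{-d+1}$ (valid for general strongly elliptic $\mathfrak{P}$ by \cite[Theorem~6.3 and Corollary~6.5]{mclean}) to recover the missing $h_\bullet^{1/2}$.

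There is one technical point to repair. With $c$ chosen small enough that $\Gamma\cap B_{2r_T}(x_T)\subseteq\pi_\bullet(T)$, the single ball $B_{r_T}(x_T)$ will in general \emph{not} contain $T$, so a trace inequality on $B_{r_T}(x_T)$ cannot control $\norm{\nabla u_T^{\rm f}}{L^2(T)}$ as you wrote. The paper avoids this by working instead with the thickened set $U_T:=B_{\delta_1(T)}(T)$, for which both $T\subset U_T$ and $U_T\cap\Gamma\subseteq\pi_\bullet(T)$ hold automatically; since $u_{\mathfrak{V},T}^{\rm far}$ then solves $\mathfrak{P}u=0$ weakly on all of $U_T$, one applies Lemma~\ref{lem:cacc} on balls $B_{\delta_1(T)}(x)$ centered at \emph{every} $x\in T$ and invokes the Besicovitch covering theorem to pass from these pointwise balls back to the full element $T$. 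With this adjustment your argument goes through.
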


\begin{proof}
The proof works essentially as in \cite[Section 4]{invest}.
Therefore, we mainly  emphasize the differences and refer to \cite[Proposition~5.3.15]{diss} for further details.

By \eqref{M:cent bem} and with the abbreviation 
$\delta_1(T):={\diam(T)}/({2\const{cent}})$
and $U_T:=B_{\delta_1(T)}(T)$, there holds for all $T\in\TT_\bullet$ that $U_T\cap\Gamma\subset B_{2\delta_1(T)}(T)\cap\Gamma\subset\pi_\bullet(T)$.
This provides us with an open covering of $\Gamma\subset\bigcup_{T\in\TT_\bullet} U_T$. 
We show that this is even locally finite in the sense that there exists a constant $C>0$ with 
$\#\set{T\in\TT_\bullet}{x\in U_T}\le C$ for all  $x\in\R^d$:
Let $x\in\R^d$. Clearly, 
we may assume that $x\in\bigcup_{T\in\TT_\coarse}U_T$.
Choose $T_0\in\TT_\coarse$ with $x\in U_{T_0}$ such that $\delta_1(T_0)$ is minimal, and let $x_0\in T_0$ with $|x-x_0|<\delta_1(T_0)$.
If $T\in\TT_\coarse$   with $x\in U_T$, the triangle inequality yields that $\dist(\{x_0\},T)<2\delta_1(T)$.
By choice of $\delta_1(T)$, \eqref{M:cent bem} hence yields that $x_0\in \pi_\coarse(T)$.
Thus, $\set{T\in\TT_\coarse}{x\in U_T}\subseteq\set{T\in\TT_\coarse}{x_0\in\pi_\coarse(T)}$, and \eqref{M:patch bem}  implies that
\begin{align}\label{eq:local cover}
\#\set{T\in\TT_\coarse}{x\in U_T}\le\#\set{T\in\TT_\coarse}{x_0\in \pi_\coarse(T)}\le \const{patch}^2.
\end{align}

We fix (independently of $\TT_\bullet$) a bounded domain $U\subset\R^d$ with $U_T\subset U$ for all $T\in\TT_\bullet$.
We define for $T\in\TT_\bullet$ the near-field and the far-field of $u_{\mathfrak{V}}:=\widetilde{\mathfrak{V}}\psi$ by
\begin{align}\label{eq:u near and far}
u_{\mathfrak{V},T}^{\rm near}:=\widetilde{\mathfrak{V}}(\psi\chi_{\Gamma\cap U_T})\quad\text{and}\quad u_{\mathfrak{V},T}^{\rm far}:=\widetilde{\mathfrak{V}}(\psi \chi_{\Gamma\setminus U_T}).
\end{align}
In the following five steps, the near-field and the far-field are estimated separately.
The first two steps deal with the near-field, whereas the last three steps deal with the far-field.

{\bf Step 1:}
As in \cite[Lemma~4.1]{invest}, one shows that 
that for all $T\in\TT_\coarse$, all $\TT_\bullet$-piecewise (componentwise) constant functions $\Psi_\coarse^T\in\mathcal{P}^0(\TT_\bullet)^{\D}$ with $\supp(\Psi_\bullet^T)\subseteq \pi_\bullet(T)$ satisfy that 
\begin{align}\label{eq:C near tilde}
\norm{\widetilde{\mathfrak{V}} \Psi_\bullet^T}{H^1(U_T)}\lesssim  
\norm{h_\bullet^{1/2}\Psi_\bullet^T}{L^2(\pi_\bullet(T))}.
\end{align}
The  proof of \cite{invest} uses only \eqref{eq:local cover} and the fact that $|\nabla_x G(x-y)|\lesssim|x-y|^{-d+1}$ $(x,y)\in U\times\Gamma$ with $x\neq y$ for the Laplacian fundamental solution $G$.
However, according to \cite[Theorem 6.3 and Corollary 6.5]{mclean} this fact is also valid for general strongly elliptic second-order partial differential equations with $C^\infty$ coefficients.
Moreover, \cite{invest} bounds only the $H^1$-seminorm in \eqref{eq:C near tilde}, but the $L^2$-norm can be bounded similarly due to $|G(x-y)|\lesssim\max\{|\log|x-y||,|x-y|^{-d+2}\}\lesssim |x-y|^{-d+1}$ (see again \cite[Theorem~6.3 and Corollary~6.5]{mclean}).

\textbf{Step 2:}
With Step~1, one shows as in \cite[Proposition 4.2]{invest} 
that $u_{\mathfrak{V},T}^{\rm near}\in H^1(U)$ and $u_{\mathfrak{V},T}^{\rm near}|_{\Gamma}\in H^1(\Gamma)$
with 
\begin{align}\label{eq:C near}
\sum_{T\in\TT_\bullet}\norm{w_\bullet\nabla_\Gamma u_{\mathfrak{V},T}^{\rm near}}{L^2(T)}^2+\sum_{T\in\TT_\bullet}\norm{w_\bullet/h_\bullet^{1/2}}{L^\infty(T)}^2 \norm{ u_{\mathfrak{V},T}^{\rm near}}{H^1(U_T)}^2\lesssim
 \norm{w_\bullet \psi}{L^2(\Gamma)}^2.
\end{align}
In the proof, one applies the stability of ${\mathfrak{V}}:L^2(\Gamma)^{\D}\to H^1(\Gamma)^{\D}$ (see \eqref{eq:single layer operator})
 and $\widetilde {\mathfrak{V}}:H^{-1/2}(\Gamma)^{\D}\to H^1(U)^{\D}$ (see \eqref{eq:single layer potential}). 
Moreover, the approximation property \eqref{eq:cp41} is exploited by splitting $\psi\chi_{\Gamma\cap U_T}=P_\coarse(\psi\chi_{\Gamma\cap U_T}) + (1-P_\coarse)(\psi\chi_{\Gamma\cap U_T})$ and choosing $\Psi_\coarse^T:=P_\coarse(\psi\chi_{\gamma\cap U_T})$.
Note that \cite{invest} only proves \eqref{eq:C near} with $\seminorm{ u_{\mathfrak{V},T}^{\rm near}}{H^1(U_T)}^2$ instead of $\norm{ u_{\mathfrak{V},T}^{\rm near}}{H^1(U_T)}^2$.

{\bf Step 3:}
We consider the far-field.
We set $\Omega^{\rm ext}:=\R^d\setminus\overline\Omega$.
According to \cite[Theorem~6.11]{mclean}, for all $T\in\TT_\bullet$, the potential $u_{\mathfrak{V},T}^{\rm far}$ is a  solution of the transmission problem
\begin{align*}
\mathfrak{P} u_{\mathfrak{V},T}^{\rm far}&=0&&\text{on }\Omega\cup\Omega^{\rm ext},\\
[ u_{\mathfrak{V},T}^{\rm far}]_\Gamma&=0&&\text{in }H^{1/2}(\Gamma)^D,\\
[\mathfrak{D}_\nu u_{\mathfrak{V},T}^{\rm far}]_\Gamma&=-\psi\chi_{\Gamma\setminus U_T}&&\text{in }H^{-1/2}(\Gamma)^D,
\end{align*}
where $[\cdot]_\Gamma$ and \ $[\mathfrak{D}_\nu (\cdot)]_\Gamma$ denote the jump of the traces and the conormal derivatives respectively (see \cite[page~117]{mclean} for a precise definition)  across the boundary $\Gamma$.
Twofold integration by parts that uses these jump conditions shows that  $\mathfrak{P} u_{\mathfrak{V},T}^{\rm far}=0$ weakly on $U_T$.
Since $B_{\delta_1(T)}(x)\subseteq U_T$ for all $x\in T$,
 Lemma~\ref{lem:cacc} shows that $u_{\mathfrak{V},T}^{\rm far}\in C^\infty(B_{\delta_1(T)/2}(x))$  with 
\begin{align}\label{eq:cacc2}
|u_{\mathfrak{V},T}^{\rm far}|_{H^2(B_{\delta_1(T)/2}(x))}&\lesssim \norm{ u_{\mathfrak{V},T}^{\rm far}}{L^2(B_{\delta_1(T)}(x))}
+\diam(T)^{-1} | u_{\mathfrak{V},T}^{\rm far}|_{H^1(B_{\delta_1(T)}(x))}.
\end{align}
Note that \cite{invest} proves \eqref{eq:cacc2} even without the term $\norm{ u_{\mathfrak{V},T}^{\rm far}}{L^2(B_{\delta_1(T)}(x))}$. 
Indeed, since the kernel of the Laplace operator contains all constants, \cite{invest} employs a  Poincar\'e inequality to bound $\norm{ u_{\mathfrak{V},T}^{\rm far}}{L^2(B_{\delta_1(T)}(x))}$ by $\diam(T)^{-1} | u_{\mathfrak{V},T}^{\rm far}|_{H^1(B_{\delta_1(T)}(x))}$.

\textbf{Step 4:}
With inequality~\eqref{eq:cacc2} at hand, one can prove  the following local far-field bound for  the single-layer potential $\widetilde {\mathfrak{V}}$
\begin{align}\label{eq:C far tilde}
\norm{h_\bullet^{1/2}\nabla_\Gamma u_{\mathfrak{V},T}^{\rm far}}{L^2(T)}\le \norm{h_\bullet^{1/2} \nabla u_{\mathfrak{V},T}^{\rm far}}{L^2(T)}\lesssim
 \norm{u_{\mathfrak{V},T}^{\rm far}}{H^1(U_T)}.
\end{align}
The proof works as in \cite[Lemma 4.4]{invest}, and relies on a standard  trace inequality on $\Omega$, the   Caccioppoli inequality \eqref{eq:cacc2}  as well as the Besicovitch covering theorem.
Note that in \cite{invest}, the estimates \eqref{eq:cacc2} and thus \eqref{eq:C far tilde} even hold without the $L^2$-norm of $u_{\mathfrak{V},T}^{\rm far}$, since the Laplace problem is considered.
 
\textbf{Step 5:}
Finally,  \eqref{eq:local cover}, \eqref{eq:u near and far},   \eqref{eq:C near}, \eqref{eq:C far tilde}, and the stability of $\widetilde {\mathfrak{V}}: H^{-1/2}(\Gamma)^D\to H^1(U)^D$ (see \eqref{eq:single layer potential}) easily lead to the far-field bound for $\widetilde{\mathfrak{V}}$
\begin{align}\label{eq:C far}
\begin{split}
\sum_{T\in\TT_\bullet} \norm{w_\coarse \nabla_\Gamma  u_{\mathfrak{V},T}^{\rm far}}{L^2(T)}^2&\le \sum_{T\in\TT_\bullet}\norm{w_\coarse \nabla u_{\mathfrak{V},T}^{\rm far}}{L^2(T)}^2\\
&\lesssim \norm{w_\coarse/h_\bullet^{1/2}}{L^\infty(\Gamma)}^2 \norm{\psi}{H^{-1/2}(\Gamma)}^2+\norm{w_\coarse\psi}{L^2(\Gamma)}^2.
\end{split}
\end{align}
For the simple proof, we refer to \cite[Proposition~4.5]{invest}.
By definition~\eqref{eq:u near and far},  \eqref{eq:C far} together with \eqref{eq:C near} from Step 2 concludes the proof.
\end{proof}

\cite{invest} does not only treat  the single-layer operator $\mathfrak{V}:H^{-1/2}(\Gamma)^D\to H^{1/2}(\Gamma)^D$, but also derives similar inverse estimates as in \eqref{eq:invest general for V} for the double-layer operator, the adjoint double-layer operator, and the hyper-singular operator.
With similar techniques as in Proposition~\ref{prop:invest for V}, we will  also generalize this result in Appendix~\ref{sec:proof general invest}.
However, we will indeed only need the inverse estimate \eqref{eq:invest for V} for the single-layer operator in the remainder of the paper.

\subsection{Stability on non-refined elements~(\ref{item:stability})}\label{subsec:stability bem}
We show that the assumptions~\eqref{M:patch bem}--\eqref{M:semi bem} and~\eqref{S:inverse bem}--\eqref{S:nestedness bem} imply stability \eqref{item:stability}, i.e., the existence of $\const{stab}\ge1$ such that for all $\TT_\coarse\in\T$, and all $\TT_\fine\in\refine(\TT_\coarse)$,
it holds that 
\begin{align}\label{eq:e1 bem1}
|\eta_\fine(\TT_\coarse\cap\TT_\fine)-\eta_\coarse(\TT_\coarse\cap\TT_\fine)|\le \const{stab}\norm{\Phi_\fine-\Phi_\coarse}{H^{-1/2}(\Gamma)}.
\end{align}
In Section~\ref{subsec:reduction bem}, we will fix the constant $\const{\varrho}$ for $\varrho_{\coarse,\fine}$ defined in \eqref{item:stability} such that $\const{stab}\le\const{\varrho}$.
The reverse triangle inequality and the fact that $h_\fine=h_\coarse$ on $\omega:=\bigcup(\TT_\coarse\cap\TT_\fine)$ prove that 
\begin{align*}
&|\eta_\fine(\TT_\coarse\cap\TT_\fine)-\eta_\coarse(\TT_\coarse\cap\TT_\fine)|=\big|\norm{h_\fine^{1/2}\nabla_\Gamma\mathfrak{V}(\phi-\Phi_\fine)}{L^2(\omega)}-\norm{h_\coarse^{1/2}\nabla_\Gamma\mathfrak{V}(\phi-\Phi_\coarse)}{L^2(\omega)}\big|\\
&\quad\le \norm{h_\fine^{1/2}\nabla_\Gamma\mathfrak{V}(\Phi_\fine-\Phi_\bullet)}{L^2(\omega)}
\le \norm{h_\fine^{1/2}\nabla_\Gamma\mathfrak{V}(\Phi_\fine-\Phi_\bullet)}{L^2(\Gamma)}.
\end{align*}
\eqref{S:nestedness bem} shows that $\Phi_\fine-\Phi_\bullet\in\XX_\fine$. 
Therefore, the inverse inequalities from \eqref{S:inverse bem} and \eqref{eq:invest for V}  are applicable, which implies  \eqref{eq:e1 bem1}.
The constant $\const{stab}$ depends only on  $d,D$, $\Gamma$,  the coefficients of $\mathfrak{P}$, and the constants from \eqref{M:patch bem}--\eqref{M:semi bem} and  \eqref{S:inverse bem}.

\subsection{Reduction on refined elements~(\ref{item:reduction})}\label{subsec:reduction bem}
We show that the assumptions~\eqref{M:patch bem}--\eqref{M:semi bem}, \eqref{R:union bem}--\eqref{R:reduction bem}, and \eqref{S:inverse bem}--\eqref{S:nestedness bem} imply reduction on refined elements \eqref{item:reduction}, i.e., the existence of $\const{red}\ge1$ and $0<\ro{red}<1$ such that for all $\TT_\coarse\in\T$ and all $\TT_\fine\in\refine(\TT_\coarse)$, it holds that
\begin{align}\label{eq:e2 bem1}
\eta_\fine(\TT_\fine\setminus\TT_\coarse)^2
\le \ro{red} \, \eta_\coarse(\TT_\coarse\backslash\TT_\fine)^2
+ \const{red}\norm{\Phi_\fine-\Phi_\coarse}{H^{-1/2}(\Gamma)}^2.
\end{align}
With this, we can fix the constant  for $\varrho_{\coarse,\fine}$ defined in \eqref{item:stability} as  
\begin{align}\label{eq:crho defined2}
\const{\varrho}:=\max\{\const{stab},\const{red}^{1/2}\}.
\end{align}
Let $\omega:=\bigcup(\TT_\fine\setminus\TT_\coarse)$.
First, we apply the  triangle inequality
\begin{align*}
\eta_\fine(\TT_\fine\setminus\TT_\coarse)&= \norm{h_\fine^{1/2}\nabla_\Gamma\mathfrak{V}(\phi-\Phi_\fine)}{L^2(\omega)}\\
&\le  \norm{h_\fine^{1/2}\nabla_\Gamma\mathfrak{V}(\phi-\Phi_\coarse)}{L^2(\omega)}+  \norm{h_\fine^{1/2}\nabla_\Gamma\mathfrak{V}(\Phi_\fine-\Phi_\coarse)}{L^2(\omega)}.
\end{align*}
Clearly, \eqref{R:union bem}--\eqref{R:reduction bem} show that $\omega=\bigcup (\TT_\fine\setminus\TT_\coarse)=\bigcup( \TT_\coarse\setminus\TT_\fine)$ and $h_\fine\le \ro{son}^{1/(d-1)} h_\coarse$ on $\omega$.
Thus, we can proceed the estimate as follows
\begin{align*}
\eta_\fine(\TT_\fine\setminus\TT_\coarse)&\le \ro{son}^{1/(2d-2)} \norm{h_\coarse^{1/2}\nabla_\Gamma\mathfrak{V}(\phi-\Phi_\coarse)}{L^2(\omega)} + \norm{h_\fine^{1/2}\nabla_\Gamma\mathfrak{V}(\Phi_\fine-\Phi_\coarse)}{L^2(\omega)}\\
 &=\ro{son}^{1/(2d-2)}\eta_\coarse(\TT_\coarse\setminus\TT_\fine)+\norm{h_\fine^{1/2}\nabla_\Gamma\mathfrak{V}(\Phi_\fine-\Phi_\coarse)}{L^2(\omega)}.
\end{align*}
Since $\Phi_\coarse\in\XX_\coarse\subseteq\XX_\fine$ according to \eqref{S:nestedness bem}, we can apply the inverse estimates  \eqref{S:inverse bem} and \eqref{eq:invest for V}.
 Together with the Young inequality, we derive for arbitrary $\delta>0$ that 
\begin{align*}
\eta_\fine(\TT_\fine\setminus\TT_\coarse)^2\le(1+\delta)\ro{son}^{1/(d-1)}\eta_\coarse(\TT_\coarse\setminus\TT_\fine)^2+(1+\delta^{-1}) \const{inv,\mathfrak{V}}^2(1+ \const{inv})^2\norm{\Phi_\fine-\Phi_\coarse}{H^{-1/2}(\Gamma)}^2. 
\end{align*}
Choosing $\delta>0$ sufficiently small, we obtain \eqref{eq:e2 bem1}.
The constant $\const{red}$ depends only on  $d,D$, $\Gamma$, the coefficients of $\mathfrak{P}$, and the constants from \eqref{M:patch bem}--\eqref{M:semi bem}, \eqref{R:union bem}--\eqref{R:reduction bem}, and  \eqref{S:inverse bem}.

\subsection{General quasi-orthogonality (\ref{item:orthogonality})}\label{subsec:orthogonality bem}
According to \cite[Section~4.3]{axioms},   Section~\ref{subsec:perturbations bem}, Section~\ref{subsec:stability bem}, and Section~\ref{subsec:reduction bem} already imply estimator convergence $\lim_{\ell\to\infty}\eta_\ell=0$.
Therefore, reliability \eqref{eq:reliable bem} 
 implies error convergence $\lim_{\ell\to\infty}\norm{\phi-\Phi_\ell}{H^{-1/2}(\Gamma)}=0$.
In particular, we obtain that  $\phi\in\XX_\infty=\overline{\bigcup_{\ell\in\N_0}\XX_\ell}$.
Recall that we have already fixed the constant $\const{\varrho}$ in \eqref{eq:crho defined2}.
We  introduce the \textit{principal part} of $\mathfrak{P}$ as the corresponding  partial differential operator without lower-order terms
\begin{align}\label{eq:principal part}
\mathfrak{P}_0v:=
-\sum_{i=1}^d \sum_{i'=1}^d\partial_i (A_{ii'}\partial_{i'} v).
\end{align}
According to \cite[Lemma~4.5]{mclean}, the principal part is also coercive on $H_0^1(\Omega)^D$.
We denote its corresponding single-layer operator which can be defined as in Section~\ref{subsec:model problem bem} by
$\mathfrak{V}_0:H^{-1/2}(\Gamma)^D\to H^{1/2}(\Gamma)^D$.
Our assumption $A_{ii'}^\top=\overline {A_{i'i}}$ easily implies that $\mathfrak{V}_0$ is self-adjoint; see,  e.g., \cite[page~218]{mclean}.
With \eqref{eq:trace of V} and \eqref{eq:V difference} below, we particularly see the stability of $\mathfrak{V}-\mathfrak{V}_0:H^{-1/2}(\Gamma)^D\to H^{1-\varepsilon}(\Gamma)^D$ for all $\varepsilon>0$. 
Thus,  the Rellich compactness theorem \cite[Theorem~3.27]{mclean} implies  that $\mathfrak{V}-\mathfrak{V}_0:H^{-1/2}(\Gamma)^D\to H^{1/2}(\Gamma)^D$ is compact.
This yields that $\mathfrak{V}$ an elliptic operator which can be written as the sum of a self-adjoint operator $\mathfrak{V}_0$ plus a compact operator $\mathfrak{V}-\mathfrak{V}_0$.
From \cite{ffp14,helmholtz}, we thus derive the general quasi-orthogonality~\eqref{item:orthogonality} (see also \cite[Section~4.4.3]{diss} for all details),
i.e., the existence of
\begin{align}\label{eq:orth1 bem}
0\le\varepsilon_{\rm qo}<\sup_{\delta>0}\frac{1-(1+\delta)(1-(1-\ro{red})\theta)}{2+\delta^{-1}} 
\end{align}
and $\const{qo}\ge 1$ such that
\begin{align}\label{eq:orth2 bem}
\sum_{j=\ell}^{\ell+N}(\const{\varrho}\norm{\Phi_{j+1}-\Phi_j}{H^{-1/2}(\Gamma)}^2-\varepsilon_{\rm qo}\eta_j^2)\le\const{qo} \eta_\ell^2\quad\text{for all }\ell,N\in\N_0.
\end{align}

\begin{remark}\label{rem:E3 bem}
If the sesquilinear form 
 $\sprod{\mathfrak{V}\,\cdot}{\cdot}$ is Hermitian, \eqref{eq:orth2 bem}  follows from the Pythagoras theorem $\norm{\phi-\Phi_j}{\mathfrak{V}}^2+\norm{\Phi_{j+1}-\Phi_j}{\mathfrak{V}}^2=\norm{\phi-\Phi_j}{\mathfrak{V}}^2$  and norm equivalence
\begin{align*}
 \sum_{j=\ell}^{\ell+N}\norm{\Phi_{j+1}-\Phi_j}{H^{-1/2}(\Gamma)}^2
 \simeq \sum_{j=\ell}^{\ell+N}\norm{\Phi_{j+1}-\Phi_j}{\mathfrak{V}}^2
 = \norm{\phi-\Phi_\ell}{\mathfrak{V}}^2-\norm{\phi-\Phi_{\ell+N}}{\mathfrak{V}}^2 
 \lesssim \norm{\phi-\Phi_\ell}{H^{-1/2}(\Gamma)}^2.
\end{align*}
Together with reliability~\eqref{eq:reliable bem}, this proves~\eqref{eq:orth2 bem} even for $\varepsilon_{\rm qo}=0$, and $\const{qo}$ is independent of the sequence $(\Phi_\ell)_{\ell\in\N_0}$.
\end{remark}%

\subsection{Discrete reliability (\ref{item:discrete reliability})}
\label{subsec:discrete reliability bem}
The proof of \eqref{item:discrete reliability} is  inspired by  \cite[Proposition~5.3]{fkmp} which considers piecewise constants on  shape-regular triangulations as ansatz space.
Under the assumptions \eqref{M:patch bem}--\eqref{M:semi bem}, \eqref{eq:R:refine bem}, and \eqref{S:inverse bem}--\eqref{S:stab bem}, we show that there exist $\const{drel},\const{ref}\ge 1$ such that for all $\TT_\coarse\in\T$ and all $\TT_\fine\in\refine(\TT_\coarse)$, the subset 
\begin{align}\label{eq:RR defined}
\RR_{\coarse,\fine}:=\Pi_\coarse^{\q{supp}+\q{loc}+2}(\TT_\coarse\setminus\TT_\fine)
\end{align}
 satisfies  that
\begin{align*}
\const{\varrho}\norm{\Phi_\fine-\Phi_\coarse}{H^{-1/2}(\Gamma)}\le \const{drel}\,\eta_\coarse(\RR_{\coarse,\fine}),
\quad
\TT_\coarse\setminus\TT_\fine\subseteq\RR_{\coarse,\fine},
\text{ and }
\# \RR_{\coarse,\fine}\le\const{ref} (\#\TT_\fine-\#\TT_\coarse).
\end{align*}
The last two properties are obvious with $\const{ref}=\const{patch}^{\q{supp}+\q{loc}+2}$ by validity of~\eqref{M:patch bem} and \eqref{eq:R:refine bem}.
The first estimate is proved in three steps:

{\bf Step 1:}
For $\SS_1:=\TT_\coarse\cap\TT_\fine$, let $J_{\coarse,\SS_1}$ be the corresponding projection operator from \eqref{S:proj bem}--\eqref{S:stab bem}.
Ellipticity~\eqref{eq:ellipticity bem}, nestedness \eqref{S:nestedness bem} of the ansatz spaces,  and the definition \eqref{eq:pregalerkin bem} of the Galerkin approximations
yield that
\begin{align*}
\norm{\Phi_\fine-\Phi_\coarse}{H^{-1/2}(\Gamma)}^2&\lesssim \mathrm{Re}\, \sprod{\mathfrak{V}(\Phi_\fine-\Phi_\bullet)}{\Phi_\fine-\Phi_\coarse}_{L^2(\Gamma)}\\
&=\mathrm{Re} \,\sprod{\mathfrak{V}(\phi-\Phi_\bullet)}{(1-J_{\coarse,\SS_1})(\Phi_\fine-\Phi_\coarse)}_{L^2(\Gamma)}.
\end{align*}
\eqref{S:local bem} shows that $(\Phi_\fine-\Phi_\coarse)|_{\pi_\bullet^{\rm proj}(T)}\in \set{\Psi_\bullet|_{\pi_\bullet^{\rm proj}(T)}}{\Psi_\bullet\in\XX_\bullet}$ for any $T\in\TT_\coarse\setminus\Pi_\coarse^{\q{loc}}(\TT_\coarse\setminus\TT_\fine)$.
Moreover, one easily sees  that 
\begin{align}\label{eq:drel bem12}
\Pi_\coarse^{\q{loc}}(T)\subseteq\TT_\coarse\cap\TT_\fine=\SS_1\quad \text{for all }T\in\TT_\coarse\setminus\Pi_\coarse^{\q{loc}}(\TT_\coarse\setminus\TT_\fine).
\end{align}
Hence, the local projection property \eqref{S:proj bem} of $J_{\coarse,\SS_1}$ is applicable and proves that
$J_{\coarse,\SS_1}(\Phi_\fine-\Phi_\coarse)=\Phi_\fine-\Phi_\coarse$ on $\Gamma\setminus\pi_\coarse^{\q{loc}}(\TT_\bullet\setminus\TT_\fine)$.
With ${\SS_2}:=\Pi_\coarse^{\q{loc}}(\TT_\bullet\setminus\TT_\fine)$, Lemma~\ref{lem:tildechi} below provides a smooth cut-off function $\chi:=\widetilde\chi_{\mathcal{S}_2}\in H^1(\Gamma)$
with $0\le \chi\le1$ such that $\chi=1$ on $\bigcup\mathcal{S}_2$, $\chi=0$ on $\Gamma\setminus\pi_\coarse(\SS_2)$, and $|\nabla_\Gamma\chi|\lesssim h_\coarse^{-1}$, where the hidden constant depends only on $d$ and \eqref{M:patch bem}--\eqref{M:cent bem}.
This leads to
%
\begin{align}\label{eq:drel help}
 \norm{\Phi_\fine-\Phi_\coarse}{H^{-1/2}(\Gamma)}^2&\lesssim\mathrm{Re} \, \sprod{\chi\,\mathfrak{V}(\phi-\Phi_\bullet)}{(1-J_{\coarse,\SS_1})(\Phi_\fine-\Phi_\coarse)}_{L^2(\Gamma)}.
\end{align}
We bound the  two terms ${\rm I}:=\mathrm{Re}\,\sprod{\chi\,\mathfrak{V}(\phi-\Phi_\bullet)}{\Phi_\fine-\Phi_\coarse}_{L^2(\Gamma)}$ and ${\rm II}:=\mathrm{Re}\,\sprod{\chi\,\mathfrak{V}(\phi-\Phi_\bullet)}{J_{\coarse,\SS_1}(\Phi_\fine-\Phi_\coarse)}_{L^2(\Gamma)}$ separately.
Since $H^{-1/2}(\Gamma)^D$ is the dual space of $H^{1/2}(\Gamma)^D$, it holds that 
\begin{align}\label{eq:drel help1}
 {\rm I}
 \le \norm{\chi\,\mathfrak{V}(\phi-\Phi_\bullet)}{H^{1/2}(\Gamma)}\norm{\Phi_\fine-\Phi_\coarse}{H^{-1/2}(\Gamma)}.
\end{align}
The Cauchy--Schwarz inequality shows that
\begin{align*}
{\rm II}\le\norm{h_\coarse^{-1/2}\chi\,\mathfrak{V}(\phi-\Phi_\bullet)}{L^2(\Gamma)}\norm{h_\coarse^{1/2}J_{\coarse,\SS_1}(\Phi_\fine-\Phi_\coarse)}{L^2(\Gamma)}.
\end{align*}
Since $J_{\coarse,\SS_1}:L^2(\Gamma)^D\to\set{\Psi_\coarse\in\XX_\coarse}{\Psi_\coarse|_{\bigcup(\TT_\coarse\setminus{\SS_1})}=0}$, it holds that $\supp(J_{\coarse,\SS_1}(\Phi_\fine-\Phi_\coarse))\subseteq \bigcup(\TT_\coarse\cap\TT_\fine)$. 
This together with  the fact that $h_\bullet=h_\fine$ on $\bigcup(\TT_\bullet\cap\TT_\fine)$,  the local $L^2$-stability \eqref{S:stab bem} and \eqref{M:patch bem}--\eqref{M:shape bem} implies that
\begin{align*}
{\rm II}&=\norm{h_\coarse^{-1/2}\chi\,\mathfrak{V}(\phi-\Phi_\bullet)}{L^2(\Gamma)}\norm{h_\fine^{1/2}J_{\coarse,\SS_1}(\Phi_\fine-\Phi_\coarse)}{L^2(\bigcup(\TT_\coarse\cap\TT_\fine))}\\
&\lesssim\norm{h_\coarse^{-1/2}\chi\,\mathfrak{V}(\phi-\Phi_\bullet)}{L^2(\Gamma)}\norm{h_\fine^{1/2}(\Phi_\fine-\Phi_\coarse)}{L^2(\Gamma)}.
\end{align*}
With the inverse inequality \eqref{S:inverse bem} applied to $\Phi_\fine-\Phi_\coarse\in\XX_\fine$ (see \eqref{S:nestedness bem}),  the latter estimate implies that
\begin{align}\label{eq:drel help2}
{\rm II}
\lesssim \norm{h_\coarse^{-1/2} \chi\mathfrak{V}(\phi-\Phi_\coarse)}{L^2(\Gamma)}\norm{\Phi_\fine-\Phi_\coarse}{H^{-1/2}(\Gamma)}.
\end{align}
Plugging \eqref{eq:drel help1}--\eqref{eq:drel help2} into \eqref{eq:drel help} shows that 
\begin{align}\label{eq:drel help3}
\begin{split}
\norm{\Phi_\fine-\Phi_\coarse}{H^{-1/2}(\Gamma)}&\lesssim \norm{h_\coarse^{-1/2}\chi\,\mathfrak{V}(\phi-\Phi_\bullet)}{L^2(\Gamma)}+\norm{\chi\,\mathfrak{V}(\phi-\Phi_\bullet)}{H^{1/2}(\Gamma)}.
\end{split}
\end{align}

{\bf Step 2:}
Next, we deal with the first summand of \eqref{eq:drel help3}.
With $\supp(\chi)\subseteq \pi_\coarse^{\q{loc}+1}(\TT_\coarse\setminus\TT_\fine) $ and $0\le\chi\le 1$, 
this implies that
\begin{align}\label{eq:chi away}
 \norm{h_\coarse^{-1/2}\chi\,\mathfrak{V}(\phi-\Phi_\bullet)}{L^2(\Gamma)}\le\norm{h_\coarse^{-1/2}\mathfrak{V}(\phi-\Phi_\bullet)}{L^2\big(\pi_\coarse^{\q{loc}+1}(\TT_\coarse\setminus\TT_\fine)\big)}.
\end{align}
By Galerkin orthogonality \eqref{eq:galerkin bem}, we see that $\mathfrak{V}(\phi-\Phi_\coarse)$ is $L^2$-orthogonal to all functions of $\XX_\bullet$ which includes in particular the functions $\Psi_{\coarse,T,j}$ from \eqref{S:unity bem}.
Hence, we can apply Proposition~\ref{prop:poincare}.
Together with \eqref{M:patch bem}--\eqref{M:shape bem} and recalling \eqref{eq:RR defined}, \eqref{eq:chi away} proves that
\begin{align*}
 \norm{h_\coarse^{-1/2}\chi\,\mathfrak{V}(\phi-\Phi_\bullet)}{L^2(\Gamma)}
 \lesssim \norm{h_\coarse^{1/2}\nabla_\Gamma \mathfrak{V}(\phi-\Phi_\coarse)}{L^2\big(\pi^{\q{supp}+\q{loc}+2}_\bullet(\TT_\coarse\setminus\TT_\fine)\big)}
 =\eta_\coarse(\RR_{\coarse,\fine}).
\end{align*}

{\bf Step 3:}
It remains to consider the second summand of   \eqref{eq:drel help3}.
Lemma \ref{lem:first split}  in conjunction with shape-regularity  \eqref{M:shape bem} implies that 
\begin{align*}
\norm{\chi\,\mathfrak{V}(\phi-\Phi_\bullet)}{H^{1/2}(\Gamma)}^2\lesssim
\sum_{T\in\TT_\bullet}\sum_{T'\in\Pi_\bullet(T)}|\chi\,\mathfrak{V}(\phi-\Phi_\bullet)|_{H^{1/2}(T\cup T')}^2 
+ \norm{h_\coarse^{-1/2}\chi\,\mathfrak{V}(\phi-\Phi_\bullet)}{L^2(\Gamma)}.
\end{align*}
We have already dealt with the second summand in Step 2 (see~\eqref{eq:chi away}).
For the first one, we fix again some $z(T,T')\in T\cap T'$ for any $T\in\TT_\coarse,T'\in\Pi_\coarse(T)$.
\eqref{M:patch bem}--\eqref{M:shape bem} and \eqref{M:semi bem} show that
\begin{align*}
&\sum_{T\in\TT_\bullet}\sum_{T'\in\Pi_\bullet(T)}|\chi\,\mathfrak{V}(\phi-\Phi_\bullet)|_{H^{1/2}(T\cup T')}^2\le\sum_{T\in\TT_\bullet}\sum_{T'\in\Pi_\bullet(T)}|\chi\,\mathfrak{V}(\phi-\Phi_\bullet)|_{H^{1/2}(\pi_\coarse(z(T,T')))}^2\\
&\quad\le\sum_{T\in\TT_\bullet}\sum_{T'\in\Pi_\bullet(T)}\norm{h_\bullet^{1/2}\nabla_\Gamma(\chi\,\mathfrak{V}(\phi-\Phi_\bullet))}{L^2(\pi_\bullet(z(T,T'))}^2\lesssim \norm{h_\bullet^{1/2}\nabla_\Gamma(\chi\,\mathfrak{V}(\phi-\Phi_\bullet))}{L^2(\Gamma)}^2.
\end{align*}
With the product rule and \eqref{eq:nablachi}, we continue our estimate
\begin{align*}
\norm{\chi\,\mathfrak{V}(\phi-\Phi_\bullet)}{H^{1/2}(\Gamma)}^2\lesssim  \norm{h_\bullet^{-1/2}\mathfrak{V}(\phi-\Phi_\bullet))}{L^2(\supp(\chi))}^2+ \norm{h_\bullet^{1/2}\nabla_\Gamma\mathfrak{V}(\phi-\Phi_\bullet))}{L^2(\supp(\chi))}^2.
\end{align*}
Note that we have already dealt with the first summand in Step 2 (see \eqref{eq:chi away}).
Finally, $\supp(\chi)\subseteq \pi_\coarse^{\q{loc}+1}(\TT_\coarse\setminus\TT_\fine)$ and $\Pi_\coarse^{\q{loc}+1}(\TT_\coarse\setminus\TT_\fine)\subseteq\RR_{\coarse,\fine}$ (see \eqref{eq:RR defined}) prove for the second one that
\begin{align*}
\norm{h_\bullet^{1/2}\nabla_\Gamma\mathfrak{V}(\phi-\Phi_\bullet))}{L^2(\supp(\chi))}^2\le \eta_\bullet(\RR_{\coarse,\fine})^2.
\end{align*}
With this, we conclude the proof of discrete reliability {\rm \eqref{item:discrete reliability}}.
The constant $\const{drel}$ depends only on  $\const{\varrho}, d,D$, $\Gamma$, and  the constants from \eqref{M:patch bem}--\eqref{M:semi bem} and  \eqref{S:inverse bem}--\eqref{S:stab bem}.



\appendix

\section{Smooth characteristic functions}\label{eq:proof of general hat}

The following lemma provides a smooth cut-off function as used in the proof of discrete reliability~\eqref{item:discrete reliability}; see Section~\ref{subsec:discrete reliability bem}.
For regular simplicial meshes as in Section~\ref{sec:standard applications}, such functions can easily be constructed by means of hat functions; see \cite[Section~5.3]{invest}.
For the present abstract setting, the construction is more technical.

\begin{lemma}\label{lem:tildechi}
Let $\TT_\coarse\in\T$ and $\SS\subseteq\TT_\coarse$.
Suppose \eqref{M:patch bem}--\eqref{M:cent bem}.
Then there exists a  function 
 $\widetilde\chi_{\SS}\in H^1(\Gamma)$  such that for  almost all $x\in\Gamma$
\begin{subequations}
\begin{align}\label{eq:tildechi_a}
&\widetilde\chi_{\SS}(x)=1\quad&&\text{if }x\in\bigcup\SS,\\
\label{eq:tildechi_b}
&0\le \widetilde\chi_{\SS}(x)\le 1\quad&&\text{if }x\in\pi_\coarse(\SS),\\
\label{eq:tildechi_c}
&\widetilde\chi_{\SS}(x)=0\quad&&\text{if }x\not \in \pi_\coarse(\SS).
\end{align}
\end{subequations}
Further, there exists a constant $C>0$ such that 
\begin{align}\label{eq:nablachi}
|\nabla_\Gamma \widetilde\chi_{\SS}(x)|\le C h_\coarse(x)^{-1}
\quad
\text{for almost all } x\in\Gamma.
\end{align}
The constant $C$ depends only on the dimension $d$ and the constants from \eqref{M:patch bem}--\eqref{M:cent bem}.
\end{lemma}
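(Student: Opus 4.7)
The plan is to build $\widetilde\chi_\SS$ explicitly as the truncation of a sum of elementwise cutoff functions. For each $T\in\TT_\coarse$, I would define
\[
\varphi_T(x) := \max\!\bigl(0,\,1 - \dist(x,T)/(c\,h_T)\bigr),
\]
where $\dist$ denotes the Euclidean distance in $\R^d$ restricted to $\Gamma$ and $c>0$ is a small constant fixed in the next step. Since $\dist(\cdot,T)$ is $1$-Lipschitz on $\R^d$, composing with the bi-Lipschitz local parametrizations of $\Gamma$ and applying the definition~\eqref{eq:surface gradient} shows that $\varphi_T\in H^1(\Gamma)\cap W^{1,\infty}(\Gamma)$ with $|\nabla_\Gamma\varphi_T|\lesssim 1/h_T$ almost everywhere, $0\le\varphi_T\le 1$, and $\varphi_T\equiv 1$ on $T$; here the hidden Lipschitz constant depends only on $\Gamma$.

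The crucial geometric step is ensuring $\supp(\varphi_T)\subseteq\pi_\coarse(T)$. Combining~\eqref{M:cent bem} with shape-regularity~\eqref{M:shape bem} gives $\dist(T,\Gamma\setminus\pi_\coarse(T))\ge\diam(T)/C_{\rm cent}\ge h_T/(C_{\rm cent}\,C_{\rm shape})$, so fixing $c := 1/(2\,C_{\rm cent}\,C_{\rm shape})$ forces $\set{x\in\Gamma}{\dist(x,T)\le c\,h_T}\subseteq\pi_\coarse(T)$ and hence $\supp(\varphi_T)\subseteq\pi_\coarse(T)$. I then set
\[
\widetilde\chi_\SS := \min\!\bigl(1,\,\textstyle\sum_{T\in\SS}\varphi_T\bigr),
\]
which is again Lipschitz on $\Gamma$, hence in $H^1(\Gamma)$. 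Properties~\eqref{eq:tildechi_a}--\eqref{eq:tildechi_c} follow at once: for $x\in T_0\in\SS$ we have $\varphi_{T_0}(x)=1$, giving~\eqref{eq:tildechi_a}; the truncation and nonnegativity of each summand yield~\eqref{eq:tildechi_b}; and if $x\notin\pi_\coarse(\SS)=\bigcup_{T\in\SS}\pi_\coarse(T)$ then every summand vanishes, yielding~\eqref{eq:tildechi_c}.

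For the gradient bound~\eqref{eq:nablachi} I would use the pointwise inequality $|\nabla_\Gamma\min(1,f)|\le|\nabla_\Gamma f|$ and estimate $\sum_{T\in\SS}|\nabla_\Gamma\varphi_T(x)|$ at a.e.\ $x\in\Gamma$. Only those $T\in\SS$ with $x\in\supp(\varphi_T)\subseteq\pi_\coarse(T)$ contribute; picking the unique $T_0\in\TT_\coarse$ that contains $x$ in its interior, any such $T$ satisfies $T\cap T_0\ne\emptyset$ and therefore belongs to $\Pi_\coarse(T_0)$, which by~\eqref{M:patch bem} has at most $C_{\rm patch}$ elements. Local quasi-uniformity~\eqref{M:locuni bem} together with~\eqref{M:shape bem} then give $h_T\simeq h_{T_0}=h_\coarse(x)$ for all contributing $T$, and the desired bound $|\nabla_\Gamma\widetilde\chi_\SS(x)|\lesssim h_\coarse(x)^{-1}$ follows. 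The only genuinely subtle ingredient is the passage from the Euclidean $1$-Lipschitz constant of $\dist(\cdot,T)$ to a uniform bound on $|\nabla_\Gamma\varphi_T|$; this step is purely a property of the fixed Lipschitz boundary $\Gamma$, independent of $\TT_\coarse\in\T$, and is where the (implicit) dependence on $\Gamma$ enters the final constant.
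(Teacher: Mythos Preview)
Your argument is correct and takes a genuinely different, more elementary route than the paper. The paper builds $\widetilde\chi_\SS$ as a $C^\infty$ function on an open neighborhood $O\supset\Gamma$ in $\R^d$: it first constructs a smooth mesh-width function $\delta_\coarse\in C^\infty(\R^d)$ by mollifying indicator functions of thickened elements, and then defines $\widetilde\chi_\SS$ by a variable-bandwidth quasi-convolution $\widetilde\chi_\SS(x)=\int\chi_{\widetilde S}(y)\,K_{\delta_\coarse(x)}(x-y)\,dy$. This yields smoothness (which the lemma does not actually require) at the price of a longer construction and a case distinction to verify~\eqref{eq:tildechi_c}. Your approach via truncated Lipschitz distance functions and the pointwise cut $\min(1,\cdot)$ is shorter and uses only Stampacchia-type chain rules; it gives a merely Lipschitz $\widetilde\chi_\SS$, which is perfectly sufficient for $H^1(\Gamma)$ and for all later uses of the lemma.

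One small point to tighten: as written, your bound $|\nabla_\Gamma\varphi_T|\lesssim h_T^{-1}$ via the chart definition~\eqref{eq:surface gradient} picks up the condition number of $D\gamma_{\omega_j}$, hence a dependence on $\Gamma$, which you flag but which the lemma excludes. This is easily removed by the same device the paper uses at the very end of its proof: your $\varphi_T$ (and hence $\widetilde\chi_\SS$) is the restriction to $\Gamma$ of a globally Lipschitz function on $\R^d$ with $|\nabla\varphi_T|\le (c\,h_T)^{-1}$ a.e.; after smoothing by a standard mollifier and passing to the limit one gets $\nabla_\Gamma\varphi_T=\nabla\varphi_T-(\nabla\varphi_T\cdot\nu)\nu$ a.e.\ on $\Gamma$, so $|\nabla_\Gamma\varphi_T|\le|\nabla\varphi_T|\le(c\,h_T)^{-1}$ with a constant depending only on $d$ and~\eqref{M:patch bem}--\eqref{M:cent bem}. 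With this adjustment your constant matches the statement exactly.
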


\begin{proof}
In the following  three steps, we will even prove the existence of a function 
$\widetilde\chi_\SS\in C^\infty(O)$ with an open superset $O\supset\Gamma$ such that the restriction to $\Gamma$ has the desired properties.
With the constants from  \eqref{M:patch bem}--\eqref{M:locuni bem} and \eqref{M:cent bem}, we define for all $T\in\TT_\bullet$,
\begin{align}\label{eq:deltas}
\delta_1(T):=\frac{\diam(T)}{2\const{patch} \const{locuni}\const{cent}},\quad\delta_2(T)&:=\frac{\diam(T)}{\const{cent}}, \quad\delta_3(T):=\frac{\diam(T)}{2\const{cent}}.
\end{align}

{\bf Step 1:} 
First, we construct an equivalent smooth mesh-size function $\delta_\coarse\in C^\infty(\R^d)$ with uniformly bounded gradient on $\Gamma$.
Let $K_1\in C^\infty(\R^d)$ be a standard mollifier with $0\le K_1\le 1$ on $B_1(0)$, $K_1=0$ on $\R^d\setminus B_1(0)$, and $\int_{\R^d} K_1\,dx=1$.
For $s>0$, we set $K_s(\cdot):=K_1(\cdot/s) s^{-d}$.
By convolution, we define 
\begin{align}
\delta_\coarse:=\sum_{T\in\TT_\bullet} \delta_1(T)\, \chi _{B_{\delta_2(T)}(T)}*K_{\delta_2(T)}.
\end{align}
Note that $\supp(\chi _{B_{\delta_2(T)}(T)}*K_{\delta_2(T)})\subseteq B_{2\delta_2(T)}(T)$ for $T\in\TT_\bullet$.
Thus,  \eqref{M:cent bem} and the choice \eqref{eq:deltas} of $\delta_2(T)$ yields that $\supp(\chi _{B_{\delta_2(T)}(T)}*K_{\delta_2(T)})\cap\Gamma\subseteq\pi_\bullet(T)$.
Together with \eqref{M:patch bem}--\eqref{M:locuni bem} and $0\le \chi _{B_{\delta_2(T)}(T)}*K_{\delta_2(T)}\le 1$, this implies  for  the interior ${\rm int}(T')$ of any  $T'\in\TT_\bullet$  that
\begin{align*}
\delta_\bullet|_{{\rm int}(T')}\le \sum_{T\in\TT_\bullet} \delta_1(T)\chi_{\pi_\bullet(T)}|_{{\rm int}(T')}=\sum_{T\in\Pi_\bullet(T')} \delta_1(T)\le \const{patch} \const{locuni}\,\delta_1(T').
\end{align*}
Note that the restriction to the interior is indeed necessary since the second term is discontinuous across faces of $\TT_\coarse$.
However, by continuity of $\delta_\coarse$, this estimate is also satisfied if ${\rm int}(T')$ is replaced by $T'$, i.e., $\delta_\coarse|_{T'}\le \const{patch}\const{locuni}\delta_1(T')$.
The fact that $\chi _{B_{\delta_2(T')}(T')}*K_{\delta_2(T')}=1$ on $T'$ shows that also the converse estimate is valid.
This leads to
\begin{align}\label{eq:tildedelta estimate}
\frac{\diam(T')}{2\const{patch} \const{locuni}\const{cent}}
=\delta_1(T')\le \delta_{\bullet}|_{T'}\le  \const{patch} \const{locuni}\,\delta_1(T')=\delta_3(T')
\quad\text{for all }T'\in\TT_\bullet.
\end{align}
In particular, this proves the existence of an  open set $\R^d\supset O\supset\Gamma$ such that $\delta_{\bullet}>0$ on $O$.
Finally, we consider the gradient of $\delta_\coarse$ for $x\in\Gamma$.
Recall that $\supp(\chi _{B_{\delta_2(T)}(T)}*K_{\delta_2(T)})\subseteq\pi_\bullet(T)$.
Together with the H\"older inequality, $\norm{\nabla K_s}{L^1(\R^d)}\lesssim s^{-1}$, and  \eqref{M:patch bem}--\eqref{M:locuni bem}, this proves that
\begin{align}\label{eq:nablatildedelta estimate}
\begin{split} 
|\nabla\delta_\coarse(x)|
&=\sum_{T\in\TT_\bullet} \delta_1(T)\,  \chi_{\pi_\bullet(T)}(x)\,|\chi _{B_{\delta_2(T)}(T)}*\nabla K_{\delta_2(T)}(x)|\\
&\lesssim\sum_{T\in\TT_\bullet} \delta_1(T)\, \chi_{\pi_\bullet(T)}(x)\delta_2(T)^{-1}\lesssim 1.
\end{split}
\end{align}

{\bf Step 2:}
In this step, we construct $\widetilde\chi_{\SS}$ and prove~\eqref{eq:tildechi_a}--\eqref{eq:tildechi_c}.
For $x\in O$, we define the quasi-convolution 
\begin{align*}
\widetilde \chi_{\SS}(x):= \int_{\R^d} \chi_{\widetilde S}(y) \,K_{\delta_\coarse(x)}(x-y)\,dy, \quad\text{where }\widetilde S:=\bigcup \set{B_{\delta_3(T)}(T)}{T\in\SS}
\end{align*}
Since $\delta_\coarse>0$ on $O$, the chain rule shows that any derivative with respect to $x$ of the term $K_{\delta_\coarse(x)}(x-y)$ exists and is continuous at all $(x,y)\in O\times \widetilde S$.
This yields that $\widetilde \chi_{\SS}\in C^\infty(O)$ and  $\widetilde\chi_\SS|_\Gamma\in H^1(\Gamma)$; see, e.g., \cite[page~98--99]{mclean}.
Since $\supp(K_s)=B_s(0)$, it holds that
\begin{align}\label{eq:tildechi2}
\widetilde \chi_{\SS}(x)= \int_{B_{\delta_\coarse(x)}(x)} \chi_{\widetilde S}(y) \,K_{\delta_\coarse(x)}(x-y)\,dy.
\end{align}
If $B_{\delta_\coarse(x)}(x)\subseteq\widetilde S$ and thus $ \chi_{\widetilde S}(y)=1$, the properties of $K_1$ show that  $\widetilde\chi_\SS(x)=1$.
Due to $\delta_\coarse|_{T'}\le\delta_{3}(T')$ for all $T'\in\TT_\bullet$ (which follows from \eqref{eq:tildedelta estimate}), this is particularly satisfied if $x\in\bigcup\SS$.
This proves \eqref{eq:tildechi_a}.
Moreover, \eqref{eq:tildechi2} shows that $0\le\widetilde\chi_\SS(x)\le 1$ for all $x\in\R^d$ (and hence verifies \eqref{eq:tildechi_b}), and $\widetilde\chi_\SS(x)=0$ if $B_{\delta_\coarse(x)}(x)\cap\widetilde S=\emptyset$.
For~\eqref{eq:tildechi_c}, it thus remains to prove that $x\in\Gamma\setminus\pi_\coarse(\SS)$ implies that  $B_{\delta_\coarse(x)}(x)\cap\widetilde S=\emptyset$.
We prove the contraposition.
Let $x\in\Gamma$ and suppose that $B_{\delta_\coarse(x)}(x)\cap\widetilde S\neq\emptyset$. Then, there exists $T\in\SS$ and $y\in \R^d$ such that $|x-y|<\delta_\coarse(x)$ and $\dist(\{y\},T)<\delta_3(T)$.
The triangle inequality yields that 
\begin{align}\label{eq:max delta}
\dist(\{x\},T)\le |x-y| +\dist(\{y\},T)<\delta_\coarse(x)+\delta_3(T)\le 2\max\big\{\delta_\coarse(x),\delta_3(T)\big\}.
\end{align}
Now, we differ two different cases:
If $\delta_\coarse(x)\le \delta_3(T)$, then we have that $\dist(\{x\},T)< 2\delta_3(T)$.
The choice \eqref{eq:deltas} of $\delta_3(T)$ together with \eqref{M:cent bem} shows that $x\in\pi_\bullet(T)\subseteq\pi_\bullet(\SS)$.
If $\delta_\coarse(x)> \delta_3(T)$, then we have that $\dist(\{x\},T)<2\delta_\coarse(x)$.
Let $T'\in\TT_\bullet$ with $x\in T'$ and $z\in T$ with $|x-z|=\dist(\{x\},T)$.
Together with \eqref{eq:tildedelta estimate} and  \eqref{eq:max delta}, this yields that
\begin{align*}
\dist(\{z\},T')\le |x-z|=\dist(\{x\},T)< 2\delta_\coarse(x)\le 2\const{patch} \const{locuni}\,\delta_{1}(T')
=\frac{\diam(T')}{\const{cent}}.
\end{align*}
Hence, \eqref{M:cent bem} implies that $z\not\in\Gamma\setminus\pi_\coarse(T')$ and thus $z\in\pi_\coarse(T')$.
Any $z'\in\Gamma$ that is sufficiently close to $z$ also satisfies that $\dist(\{z'\},T')<\diam(T')/\const{cent}$ and thus $z'\in \pi_\coarse(T')$.
Since $z\in T=\overline{{\rm int}(T)}$, $z'$ can be particularly chosen in ${\rm int}(T)$.
Hence, we see that ${\rm int}(T)\cap\pi_\coarse(T')\neq\emptyset.$
This is equivalent to $T'\in\Pi_\coarse(T)$,  which concludes that $x\in T'\subseteq\pi_\bullet(T)\subseteq\pi_\bullet(\SS)$.

{\bf Step 3:}
Finally, we prove \eqref{eq:nablachi}.
We recall that $\delta_\coarse>0$ on $O$; see Step 1.
With the identity matrix $I\in\R^{d\times d}$ and the matrix $(x-y)(\nabla \delta_\coarse(x))^\top\in \R^{d\times d}$, elementary calculations prove for all $x\in O\supset\Gamma$  and all $y\in\R^d$ that
\begin{align*}%
\big(\nabla_x \big[K_{\delta_\coarse(x)}(x-y)\big]\big)^\top&=\Big[\nabla K_1\Big(\frac{x-y}{\delta_\coarse(x)}\Big)\Big]^\top \,\frac{\delta_\coarse(x)I-(x-y)(\nabla \delta_\coarse(x))^\top}{\delta_\coarse(x)^2} \,\delta_\coarse(x)^{-d}\\
&\quad+K_1\Big(\frac{x-y}{\delta_\coarse(x)}\Big) \delta_\coarse(x)^{-d-1}(-d) (\nabla \delta_\coarse(x))^\top.
\end{align*}%
Considering the norm, we see that
\begin{align*}
\big|\nabla_x \big(K_{\delta_\coarse(x)}(x-y)\big)\big|\lesssim\delta_\coarse(x)^{-d-1}+ |x-y| |\nabla \delta_\coarse(x)|\,\delta_\coarse(x)^{-d-2}+ \delta_\coarse(x)^{-d-1} |\nabla \delta_\coarse(x)|.
\end{align*}
Together with $\supp(K_s)=B_s(0)$, this shows for all $x\in\Gamma$ that
\begin{align*}
|\nabla \widetilde\chi_\SS(x)|&=\Big|\int_{\R^d} \chi_{\widetilde S}(y) \nabla_x \big(K_{\delta_\coarse(x)}(x-y)\big)\,dy\Big|
\\&\lesssim\int_{B_{\delta_\coarse(x)}(x)} \delta_\coarse(x)^{-d-1}+ |x-y| |\nabla \delta_\coarse(x)|\,\delta_\coarse(x)^{-d-2}+ \delta_\coarse(x)^{-d-1} |\nabla \delta_\coarse(x)| \,dy\\
&\lesssim  \delta_\coarse(x)^{-1}(1+\norm{\nabla\delta_\coarse}{L^\infty(\Gamma)}).
\end{align*}
Thus, \eqref{eq:tildedelta estimate}--\eqref{eq:nablatildedelta estimate} and \eqref{M:shape bem} prove that
$|\nabla \widetilde\chi_\SS(x)|\lesssim h_\bullet(x)^{-1}$ for almost all $x\in\Gamma$.
Moreover, for smooth functions, the surface gradient $\nabla_\Gamma$ is the orthogonal projection of the gradient $\nabla$ onto the tangent plane; see, e.g., \cite[Lemma 2.22]{gme}).
With the outer normal vector $\nu$, this implies that $\nabla_\Gamma\widetilde\chi_\SS =\nabla \widetilde\chi_\SS-(\nabla\widetilde\chi_\SS\cdot\nu)\nu$ almost everywhere on $\Gamma$, and concludes the proof with the previous estimate.
\end{proof}

\section{Inverse inequalities for other integral operators}\label{sec:proof general invest}
In Proposition~\ref{prop:invest for V}, we have generalized an inverse estimate from \cite{invest} for  the single-layer operator $\mathfrak{V}:H^{-1/2}(\Gamma)^D\to H^{1/2}(\Gamma)^D$.
\cite{invest} additionally derived similar estimates for the \emph{double-layer operator} $\mathfrak{K}':H^{1/2}(\Gamma)^D\to H^{1/2}(\Gamma)^D$, the \emph{adjoint double-layer operator} $\mathfrak{K}':H^{-1/2}(\Gamma)^D\to H^{-1/2}(\Gamma)^D$, and the \emph{hyper-singular operator} $\mathfrak{W}:H^{1/2}(\Gamma)^D\to H^{-1/2}(\Gamma)^D$; see, e.g., \cite[page~218]{mclean}  for a precise definition (where these operators are denoted by $\frac{1}{2}T,\frac{1}{2}\widetilde T^*,$ and $R$, respectively).
Although \cite{invest}  considered only the Laplace problem,  the techniques of the proof of Proposition~\ref{prop:invest for V} extend the result at least to partial differential operators  without lowest-order term $c u$. 
With some further effort, one can even prove it for arbitrary PDE operators $\mathfrak{P}$ with constant coefficients as in Section~\ref{subsec:model problem bem}.
To this end, one requires additional regularity of the trace operator $(\cdot)|_\Gamma: H^{3/2}(\Omega)^D\to H^1(\Gamma)^D$, which is satisfied for piecewise smooth boundaries $\Gamma$; see, e.g., \cite[Remark~3.1.18]{ss}.

For the proof, we will frequently use \cite[Theorem~4.24]{mclean}, which reads as follows: 
Let $u\in H^1(\Omega)$ be arbitrary with $\mathfrak{P} u\in L^2(\Omega)^D$ in the weak sense and $u|_\Gamma\in H^1(\Gamma)$.
Then, $\mathfrak{D}_\nu u\in L^2(\Gamma)^D$ and 
\begin{align}\label{eq:regular normal}
\norm{\mathfrak{D}_\nu u}{L^2(\Gamma)}
\lesssim \norm{u|_\Gamma}{H^1(\Gamma)} + \norm{u}{H^1(\Omega)} + \norm{\mathfrak{P}u}{L^2(\Omega)}.
\end{align}

\begin{proposition}\label{prop:general invest}
Suppose \eqref{M:patch bem}--\eqref{M:semi bem}.
For $\TT_\bullet\in\T$, let $w_\bullet\in L^\infty(\Gamma)$ be a weight function which satisfies for some $\alpha>0$ and all $T\in\TT_\bullet$ that 
\begin{align}
\norm{w_\bullet}{L^\infty(T)}\le\alpha w_\bullet(x)\quad\text{for almost all }x\in\pi_\bullet(T).
\end{align}
Then, there exists a constant $C_1>0$ such that for all $\psi\in L^2(\Gamma)^D$ and $v\in H^1(\Gamma)^D$, 
\begin{align}
\norm{w_\bullet\nabla_\Gamma \mathfrak{V}\psi}{L^2(\Gamma)}+\norm{w_\bullet \mathfrak{K}'\psi}{L^2(\Gamma)}&\le C_1 \big(\norm{w_\bullet/h_\bullet^{1/2}}{L^\infty(\Gamma)}\norm{\psi}{H^{-1/2}(\Gamma)}+\norm{w_\bullet\psi}{L^2(\Gamma)}\big),\label{eq:invest1}
\end{align}
If we additionally suppose that the  trace operator satisfies the stability $(\cdot)|_\Gamma:H^{3/2}(\Omega)^D\to H^1(\Gamma)^D$, there exists a constant $C_2>0$ such that for all $\psi\in L^2(\Gamma)^D$ and $v\in H^1(\Gamma)^D$,
\begin{align}
\norm{w_\bullet\nabla_\Gamma \mathfrak{K}v}{L^2(\Gamma)}+\norm{w_\bullet \mathfrak{W}v}{L^2(\Gamma)}&\le C_2 \big(\norm{w_\bullet/h_\bullet^{1/2}}{L^\infty(\Gamma)}\norm{v}{H^{1/2}(\Gamma)}+\norm{w_\bullet\nabla_\Gamma v}{L^2(\Gamma)}\big).\label{eq:invest2}
\end{align}
The constants $C_1$ and $C_2$ depend only on \eqref{M:patch bem}--\eqref{M:semi bem}, $\Gamma$,  the coefficients of $\mathfrak{P}$, and the admissibility constant $\alpha$.
\end{proposition}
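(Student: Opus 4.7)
The plan is to mirror the architecture of Proposition~\ref{prop:invest for V} and apply the same near-field/far-field decomposition on each element $T\in\TT_\bullet$, now to the single-layer potential $\widetilde{\mathfrak{V}}$ when estimating $\mathfrak{K}'\psi$ and to the double-layer potential $\widetilde{\mathfrak{K}}$ when estimating $\mathfrak{K}v$ and $\mathfrak{W}v$. The starting point is the standard jump-relation identification: the operator $\mathfrak{K}'\psi$ coincides, up to an additive $\pm\tfrac12\psi$ coming from $[\mathfrak{D}_\nu\widetilde{\mathfrak{V}}\psi]_\Gamma=-\psi$, with the average of the interior and exterior conormal derivatives of $\widetilde{\mathfrak{V}}\psi$; similarly, $\mathfrak{K}v$ differs from the average trace of $\widetilde{\mathfrak{K}}v$ by $\pm\tfrac12 v$, and $\mathfrak{W}v=-\mathfrak{D}_\nu^{\rm int}\widetilde{\mathfrak{K}}v=-\mathfrak{D}_\nu^{\rm ext}\widetilde{\mathfrak{K}}v$ has no jump. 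The $\pm\tfrac12\psi$ accounts for the $\norm{w_\bullet\psi}{L^2(\Gamma)}$ on the right-hand side of \eqref{eq:invest1}.

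For \eqref{eq:invest1} I would split $u_{\mathfrak{V}}=u_{\mathfrak{V},T}^{\rm near}+u_{\mathfrak{V},T}^{\rm far}$ exactly as in \eqref{eq:u near and far} and control the conormal derivatives of the two pieces separately on each $T$. The far-field $u_{\mathfrak{V},T}^{\rm far}$ still solves $\mathfrak{P}u=0$ weakly on $U_T$ (cf.\ Step~3 of the proof of Proposition~\ref{prop:invest for V}), so Lemma~\ref{lem:cacc} yields $H^2$-control on balls of radius $\simeq h_T$ and, combined with a pointwise trace estimate, gives $\norm{\mathfrak{D}_\nu u_{\mathfrak{V},T}^{\rm far}}{L^2(T)}\lesssim h_T^{-1/2}\norm{u_{\mathfrak{V},T}^{\rm far}}{H^1(U_T)}$; summing with weight $w_\bullet$ via the locally-finite covering \eqref{eq:local cover} and the stability of $\widetilde{\mathfrak{V}}:H^{-1/2}(\Gamma)^D\to H^1(U)^D$ recovers the far-field part of \eqref{eq:invest1}, exactly as in \eqref{eq:C far}. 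For the near-field I would invoke the regularity estimate \eqref{eq:regular normal} with $u=u_{\mathfrak{V},T}^{\rm near}$ on $\Omega$; since $\mathfrak{P}u_{\mathfrak{V},T}^{\rm near}=0$ weakly in $\Omega$, only the $H^1(\Gamma)$-trace term and the $H^1(\Omega)$-term survive. The former is controlled by \eqref{eq:invest for V} applied to the localized density $\psi\chi_{\Gamma\cap U_T}$ combined with the approximation property of Lemma~\ref{lem:cp41}, while the latter is already controlled by \eqref{eq:C near tilde}.

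For \eqref{eq:invest2} the same scheme is repeated for $\widetilde{\mathfrak{K}}v$. The far-field analysis is essentially unchanged because $\mathfrak{P}\widetilde{\mathfrak{K}}v=0$ off $\Gamma$ and the pointwise bound $|\nabla_x^\alpha \mathfrak{D}_\nu G(x-y)|\lesssim|x-y|^{-d+1-|\alpha|}$ (\cite[Theorem~6.3]{mclean}) suffices for the Caccioppoli step. The near-field must however compensate for the additional $y$-derivative carried by the double-layer kernel; this is achieved by a tangential integration-by-parts identity on $\Gamma$, which transfers a $\nabla_\Gamma$ from the kernel onto the density $v$ and produces the $\norm{w_\bullet\nabla_\Gamma v}{L^2(\Gamma)}$-term in \eqref{eq:invest2}. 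The subsequent application of \eqref{eq:regular normal} to $u_{\mathfrak{K},T}^{\rm near}$ requires $u_{\mathfrak{K},T}^{\rm near}|_\Gamma\in H^1(\Gamma)^D$; since the double-layer potential maps into $H^{3/2}$ in $\Omega$, the additional hypothesis that $(\cdot)|_\Gamma:H^{3/2}(\Omega)^D\to H^1(\Gamma)^D$ is exactly what supplies this control, explaining why \eqref{eq:invest2} is stated under this extra assumption.

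The main obstacle is the tangential integration-by-parts for the near-field of $\widetilde{\mathfrak{K}}$: on a general Lipschitz boundary, shifting a conormal-type derivative from the kernel onto the localized density $v\chi_{\Gamma\cap U_T}$ has to be justified intrinsically, which after a partition of unity amounts to a weighted surface-Stokes identity and careful handling of the commutator between the cut-off $\chi_{\Gamma\cap U_T}$ and $\nabla_\Gamma$. Once this identity is in place, every remaining estimate reduces to the machinery already assembled in Proposition~\ref{prop:invest for V}, namely the Caccioppoli inequality \eqref{eq:cacc}, the $L^2$-projection approximation Lemma~\ref{lem:cp41}, the locally-finite covering \eqref{eq:local cover}, the mapping properties \eqref{eq:single layer operator}--\eqref{eq:single layer potential}, and the trace-regularity bound \eqref{eq:regular normal}.
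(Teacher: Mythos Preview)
Your treatment of \eqref{eq:invest1} is essentially the paper's: the jump relation $\mathfrak{K}'=\mathfrak{D}_\nu^{\rm int}\widetilde{\mathfrak{V}}-\tfrac12$, the same near-field/far-field split of $\widetilde{\mathfrak{V}}\psi$, Caccioppoli for the far-field, and the boundary regularity estimate~\eqref{eq:regular normal} together with the $L^2\!\to\! L^2$ stability of $\mathfrak{K}'$ for the near-field. That part is fine.

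For \eqref{eq:invest2} your route diverges from the paper and the divergence is precisely where your acknowledged ``main obstacle'' sits. You propose to shift a conormal-type derivative from the double-layer kernel onto the localized density by a surface-Stokes identity on a merely Lipschitz $\Gamma$; such Maue-type formulas are delicate already for the Laplacian and are not available in the required form for general strongly elliptic systems with constant coefficients. The paper avoids this entirely. Its mechanism is: on each $T$ subtract the constant $v_T=|T|^{-1}\!\int_T v$, use that for the \emph{principal part} $\mathfrak{P}_0$ the double-layer potential of a constant is constant, namely $\widetilde{\mathfrak{K}}_0 v_T=-v_T$ on $\Omega$ and $0$ on $\Omega^{\rm ext}$, and then control $(v-v_T)\widetilde\chi_T$ in $H^{1/2}$ and $H^1$ via the Poincar\'e-type bounds~\eqref{eq:poincares}. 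Together with the stabilities $\mathfrak{K}_0:H^1(\Gamma)^D\!\to\!H^1(\Gamma)^D$ and $\mathfrak{W}_0:H^1(\Gamma)^D\!\to\!L^2(\Gamma)^D$, this produces the $\norm{w_\bullet\nabla_\Gamma v}{L^2(\Gamma)}$ term without any tangential integration by parts. The assumption $(\cdot)|_\Gamma:H^{3/2}(\Omega)^D\to H^1(\Gamma)^D$ enters only later, not for the near-field trace of $\widetilde{\mathfrak{K}}$ as you suggest.

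There is a second gap you do not address at all: the identity $\widetilde{\mathfrak{K}}(\text{const})=\text{const}$ fails once $\mathfrak{P}$ carries lower-order terms $b_i,c$, so the above argument only proves \eqref{eq:invest2} for $\mathfrak{K}_0,\mathfrak{W}_0$. The paper closes this by showing, via the Newton-potential difference $\widetilde{\mathfrak{N}}-\widetilde{\mathfrak{N}}_0$ and a Fourier-symbol argument giving $|P(\xi)^{-1}-P_0(\xi)^{-1}|=\mathcal{O}(|\xi|^{-3})$, that $\mathfrak{K}-\mathfrak{K}_0:H^{1/2}(\Gamma)^D\to H^1(\Gamma)^D$ and $\mathfrak{W}-\mathfrak{W}_0:H^{1/2}(\Gamma)^D\to L^2(\Gamma)^D$ are bounded; this is exactly where the trace regularity $H^{3/2}(\Omega)^D\to H^1(\Gamma)^D$ is used. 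Your proposal contains no analogue of this step, so even if the surface-Stokes identity could be salvaged, your argument would still only cover operators without lower-order terms.
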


\begin{proof}[Proof of \eqref{eq:invest1}]
The bound for $\norm{w_\coarse\nabla_\Gamma\mathfrak{V}\psi}{L^2(\Gamma)}$ is just the assertion of Proposition~\ref{prop:invest for V}.
With the results from the proof of Proposition~\ref{prop:invest for V}, one can easily estimate the second summand $\norm{w_\bullet \mathfrak{K}'\psi}{L^2(\Gamma)}$ as in \cite[Section~6]{invest}.
In particular, the stability of $\mathfrak{K}':L^2(\Gamma)\to L^2(\Gamma)$ is exploited, which is stated in \cite[page~209]{mclean}. 
Further, one uses the fact from \cite[page~218]{mclean} that $\mathfrak{K}'=\widetilde{\mathfrak{D}}^{\rm int}_\nu \widetilde{\mathfrak{V}}-1/2$, where $\widetilde{\mathfrak{D}}_\nu^{\rm int}(\cdot)$ denotes the interior modified conormal derivative from \cite[page~117--118]{mclean}. 
\end{proof}
\begin{proof}[Proof of \eqref{eq:invest2}]
As in the  proof of Proposition~\ref{prop:invest for V}, we abbreviate $\delta_1(T):={\diam(T)}/({2\const{cent}})$
and $U_T:=B_{\delta_1(T)}(T)$ for all $T\in\TT_\coarse$.
Let $v_T:=|T|^{-1}\int_T v\,dx$.
Further, let $\widetilde\chi_T:=\widetilde \chi_{\{T\}}$ be the smooth quasi-indicator function of $T$ from Lemma~\ref{lem:tildechi}.
With the Poincar\'e inequality~\eqref{eq:SS poincare}, the localization properties \eqref{eq:first split} as well as \eqref{eq:patch2patch}, and \eqref{M:semi bem}, one can easily verify that 
\begin{align}\label{eq:poincares}
h_T^{-1}\norm{v-v_T}{L^2(\pi_\coarse(T))} +h_T^{-1/2}\norm{(v-v_T)\widetilde\chi_T}{H^{1/2}(\Gamma)} +\norm{(v-v_T)\widetilde \chi_T}{H^1(\Gamma)} \lesssim \norm{ \nabla _\Gamma v}{L^2(\pi_\coarse(T))}.
\end{align}
We fix (independently of $\TT_\bullet$) a bounded domain $U\subset\R^d$ with $U_T\subset U$ for all $T\in\TT_\bullet$. 
Let   $\widetilde {\mathfrak{K}}:H^{1/2}(\Gamma)^D\to H^1(U\setminus \Gamma)^D$ denote the \emph{double-layer potential} from \cite[Theorem~6.11]{mclean}.
With these preparations and if $\mathfrak{P}$ has no lower-order terms, i.e., $b_i=0$ for all $i\in\{1,\dots,d\}$ as well as $c=0$, the proof of \eqref{eq:invest2} follows as in \cite[Section~5 and Section~6]{invest}.
In particular, one exploits the fact that 
\begin{align}\label{eq:K constant}
\widetilde{\mathfrak{K}} v_T=-v_T \text{ on } \Omega\quad\text{and}\quad \widetilde {\mathfrak{K}} v_T=0 \text{ on } \Omega^{\rm ext} :=\R^d\setminus\overline\Omega,
\end{align}
which follows from the representation formula \cite[Theorem~6.10]{mclean} together with the assumption that $\mathfrak{P}$ has no lower-order terms.
The proof of~\eqref{eq:invest2} then employs the Poincar\'e inequality \eqref{eq:poincares}, the property \eqref{eq:K constant}, the stability of ${\mathfrak{K}}:H^1(\Gamma)^D\to H^1(\Gamma)^D$ and of $\mathfrak{W}:H^1(\Gamma)^D\to L^2(\Gamma)^D$ from \cite[Corollary~3.38]{mitrea} or  \cite[page~209]{mclean}, and the Caccioppoli inequality~\eqref{eq:cacc} in combination with the transmission property \cite[Theorem~6.11]{mclean}.

To prove \eqref{eq:invest2} for general $\mathfrak{P}$ with lower-order terms, where \eqref{eq:K constant} is in general false, 
we recall the principal part $\mathfrak{P}_0$ from \eqref{eq:principal part}.
In the following five steps, we  show for the corresponding double-layer operator $\mathfrak{K}_0:H^{1/2}(\Gamma)\to H^{1/2}(\Gamma)$ and the hyper-singular operator $\mathfrak{W}_0:H^{-1/2}(\Gamma)\to H^{1/2}(\Gamma)$ that 
\begin{align}\label{eq:differences}
\mathfrak{K}-\mathfrak{K}_0:H^{1/2}(\Gamma)\to H^1(\Gamma) \quad\text{and}\quad\mathfrak{W}-\mathfrak{W}_0:H^{1/2}(\Gamma)\to L^2(\Gamma)
\end{align}
are continuous.
Since \eqref{eq:invest2} is satisfied for the operators corresponding to $\mathfrak{P}_0$, this and the trivial estimate $w_\coarse\lesssim \norm{w_\coarse/h_\coarse^{1/2}}{L^\infty(\Gamma)}$ will conclude the proof.

{\bf Step~1:}
Let $\widetilde{\mathfrak{N}},\widetilde{\mathfrak{N}}_0$ be the \emph{Newton potentials} from \cite[Theorem~6.1]{mclean} corresponding to $\mathfrak{P},\mathfrak{P}_0$.
According to \cite[Theorem~6.1]{mclean}, they satisfy the mapping property $\widetilde{\mathfrak{N}},\widetilde{\mathfrak{N}}_0:H^\sigma(\R^d)^D\to H^{\sigma+2}(\R^d)^D$ for all $\sigma\in\R$. 
In the proof of the latter stability, the fundamental solution is defined in terms of the Fourier transformation.
The definition involves a multivariate polynomial $P:\R^d\to\C^{D\times D}$ resp.\ $P_0:\R^d\to\C^{D\times D}$ associated to $\mathfrak{P}$ resp.\ $\mathfrak{P}_0$ (which is obtained from the differential operator by replacing the derivatives with variables) such that $|P(\xi)| \lesssim |P_0(\xi)^{-1}| = \OO(|\xi|^{-2})$ for $\xi\in\R^d$ and $|\xi|\to\infty$; see~\cite[Equation~(6.7)]{mclean}.
Indeed, the latter inequality is the key of the proof.
As elementary analysis  even shows that $|P(\xi)^{-1}-P_0(\xi)^{-1}|=|P(\xi)^{-1}[I-P(\xi)P_0(\xi)^{-1}]|=\OO(|\xi|^{-3}$) with the identity matrix $I\in\C^{D\times D}$, one sees along the lines of \cite[Theorem~6.1]{mclean} the additional regularity
\begin{align}\label{eq:newton diff}
\widetilde{\mathfrak{N}}-\widetilde{\mathfrak{N}}_0:H^\sigma(\R^d)^D\to H^{\sigma+3}(\R^d)^D.
\end{align}
Since multiplication with a fixed compactly supported smooth function is stable (see, e.g., \cite[Theorem~3.20]{mclean}), we also see for  arbitrary $\chi_1,\chi_2\in C_c^\infty(\R^d)$ the continuity of
\begin{align}\label{eq:newton diff2}
\mathfrak{A}:H^\sigma(\R^d)^D\to H^{\sigma+3}(\R^d)^D, \quad g \mapsto \chi_1(\widetilde{\mathfrak{N}}-\widetilde{\mathfrak{N}}_0)(g \chi_2).
\end{align}

{\bf Step~2:}
For sufficiently large $\lambda>0$, the sesquilinear forms $\sprod{\cdot}{\cdot}_{\mathfrak{P}+\lambda}$ and $\sprod{\cdot}{\cdot}_{\mathfrak{P}_0+\lambda}$ from~\eqref{eq:PDE form} are both even elliptic on $H_0^1(\Omega)^D$.
Let $u_\lambda\in H^1(\Omega)$ be the unique weak solution of $(\mathfrak{P}+\lambda)u_\lambda=0$ and $u_\lambda|_\Gamma=v$.
Similarly, let  $u_{0,\lambda}\in H^1(\Omega)$ be the solution of $(\mathfrak{P}_0+\lambda)u_{0,\lambda}=0$ and $u_{0,\lambda}|_\Gamma=v$.
We extend both functions by zero outside of $\Omega$.
Since $\mathfrak{P} u_\lambda=-u_\lambda$ and $\mathfrak{P}_0 u_{0,\lambda}=-u_{0,\lambda}$, the representation formula \cite[Theorem~6.10]{mclean} yields that 
\begin{align}\label{eq:K representation}
(\widetilde{\mathfrak{K}}-\widetilde{\mathfrak{K}}_0)v
=(u_\lambda-u_{0,\lambda})
-\lambda (\widetilde{\mathfrak{N}}u_\lambda-\widetilde{\mathfrak{N}}_0u_{0,\lambda} )
+(\widetilde {\mathfrak{V}}\mathfrak{D}_\nu u_\lambda-\widetilde{\mathfrak{V}}_0\mathfrak{D}_\nu u_{0,\lambda}).
\end{align}
We note that $\mathfrak{K}-\mathfrak{K}_0=(\widetilde{\mathfrak{K}}-\widetilde{\mathfrak{K}}_0)(\cdot)|_{\Gamma}$ and $\mathfrak{W}-\mathfrak{W}_0=-\mathfrak{D}_\nu (\widetilde{\mathfrak{K}}-\widetilde{\mathfrak{K}}_0)$ with the conormal derivative $\mathfrak{D}_\nu(\cdot)$.
To see~\eqref{eq:differences} and hence to conclude the proof, we thus only have to bound the trace as well as the conormal derivative of each summand in \eqref{eq:K representation} separately, which will be done in the following three steps.

{\bf Step~3:}
By definition, the trace of the first summand in \eqref{eq:K representation} vanishes, i.e., $(u_\lambda-u_{0,\lambda})|_\Gamma=0$.
According to \eqref{eq:regular normal} (where the differential operator can also be chosen as $\mathfrak{P}+\lambda$ instead of $\mathfrak{P}$),    the  normal derivative satisfies that
\begin{align*}
\norm{\mathfrak{D}_\nu(u_\lambda-u_{0,\lambda})}{L^2(\Gamma)}
\stackrel{\eqref{eq:regular normal}}\lesssim \norm{(u_\lambda-u_{0,\lambda})|_\Gamma}{H^1(\Gamma)} +\norm{u_\lambda-u_{0,\lambda}}{H^1(\Omega)}+\norm{(\mathfrak{P}+\lambda)(u_\lambda-u_{0,\lambda})}{L^2(\Omega)}.
\end{align*}
Since the first summand vanishes and $ \norm{(\mathfrak{P}+\lambda)(u_\lambda-u_{0,\lambda})}{L^2(\Omega)}\lesssim\norm{u_{0,\lambda}}{H^1(\Omega)}$ due to $(\mathfrak{P}+\lambda)(u_\lambda-u_{0,\lambda})=-\big(\sum_{i=1}^d b_i\partial_i u_{0,\lambda}\big) -c u_{0,\lambda}$, the stability of the solution mapping $v\mapsto u_\lambda$ and $v\mapsto u_{0,\lambda}$ gives that
\begin{align}
\label{eq:solution difference}
\norm{\mathfrak{D}_\nu(u_\lambda-u_{0,\lambda})}{L^2(\Gamma)}
\lesssim\norm{v}{H^{1/2}(\Gamma)}. 
\end{align}

{\bf Step~4:}
Due to our assumption that the trace operator $(\cdot)|_\Gamma:H^{3/2}(\Omega)^D\to H^1(\Gamma)^D$ is well-defined and continuous, the stability of the Newton potentials, the stability of the solution mapping $v\mapsto u_\lambda$ and $v\mapsto u_{0,\lambda}$, the trace of the second summand in \eqref{eq:K representation}   satisfies that  
\begin{align*}
&\norm{(\widetilde{\mathfrak{N}}u_\lambda-\widetilde{\mathfrak{N}}_0u_{0,\lambda})|_\Gamma}{H^1(\Gamma)}
\le  \norm{(\widetilde{\mathfrak{N}}u_\lambda)|_\Gamma}{H^{1}(\Gamma)}
+\norm{(\widetilde{\mathfrak{N}}_0 u_{0,\lambda})|_\Gamma}{H^{1}(\Gamma)}
\\
&\quad\lesssim \norm{\widetilde{\mathfrak{N}}u_\lambda}{H^{3/2}(\Omega)}
+\norm{\widetilde{\mathfrak{N}}_0 u_{0,\lambda}}{H^{3/2}(\Omega)}
\lesssim \norm{u_\lambda}{H^{-1/2}(\R^d)} +\norm{u_{0,\lambda}}{H^{-1/2}(\R^d)}
\\
&\quad\lesssim \norm{u_\lambda}{L^2(\R^d)} +\norm{u_{0,\lambda}}{L^2(\R^d)}
= \norm{u_\lambda}{L^2(\Omega)} +\norm{u_{0,\lambda}}{L^2(\Omega)}
\lesssim \norm{v}{H^{1/2}(\Gamma)}.
\end{align*}

Note that $\widetilde{\mathfrak{N}}$ and $\widetilde{\mathfrak{N}}_0$ are indeed potentials, i.e., $\mathfrak{P}\widetilde{\mathfrak{N}}=0$ weakly and $\mathfrak{P}_0\widetilde{\mathfrak{N}}_0=0$ weakly. 
With \eqref{eq:regular normal} (applied for $\mathfrak{P}$ and $\mathfrak{P}_0$), the stability of the Newton potentials, and the estimates for the trace of $\widetilde{\mathfrak{N}}u_\lambda$ and $\widetilde{\mathfrak{N}}_0 u_{0,\lambda}$, we thus see that 
\begin{align*}
&\norm{\mathfrak{D}_\nu (\widetilde{\mathfrak{N}}u_\lambda-\widetilde{\mathfrak{N}}_0u_{0,\lambda})}{L^2(\Gamma)}
\stackrel{\eqref{eq:regular normal}}\lesssim \norm{(\widetilde{\mathfrak{N}}u_\lambda)|_\Gamma}{H^1(\Gamma)}+\norm{\widetilde{\mathfrak{N}}u_\lambda}{H^1(\Omega)}
+\norm{(\widetilde{\mathfrak{N}}_0u_{0,\lambda})|_\Gamma}{H^1(\Gamma)}+\norm{\widetilde{\mathfrak{N}}_0 u_{0,\lambda}}{H^1(\Omega)}
\\
&\quad\lesssim  \norm{v}{H^{1/2}(\Gamma)} + \norm{u_\lambda}{H^{-1}(\R^d)}
+\norm{u_{0,\lambda}}{H^{-1}(\R^d)}
\lesssim \norm{v}{H^{1/2}(\Gamma)} + \norm{u_\lambda}{L^2(\R^d)}
+\norm{u_{0,\lambda}}{L^2(\R^d)}.
\end{align*}
Again, stability of the solution mappings allows to bound the last terms by $\norm{v}{H^{1/2}(\Gamma)}$.

{\bf Step~5:}
To deal with the third summand in \eqref{eq:K representation}, we first rewrite it as follows
\begin{align}\label{eq:V splitting}
\widetilde {\mathfrak{V}}\mathfrak{D}_\nu u_\lambda-\widetilde{\mathfrak{V}}_0\mathfrak{D}_\nu u_{0,\lambda}
=\widetilde {\mathfrak{V}}\mathfrak{D}_\nu (u_\lambda-u_{0,\lambda})
+(\widetilde{\mathfrak{V}}-\widetilde{\mathfrak{V}}_0)\mathfrak{D}_\nu u_{0,\lambda}.
\end{align}
Due to the stability of $\widetilde {\mathfrak{V}}(\cdot)|_\Gamma=\mathfrak{V}:L^2(\Gamma)\to H^1(\Gamma)$ (see \eqref{eq:single layer operator} and \eqref{eq:trace of V}) and \eqref{eq:solution difference}, we have for the first summand in \eqref{eq:V splitting} that  
\begin{align}\label{eq:Vtrace to v}
\norm{(\widetilde {\mathfrak{V}}\mathfrak{D}_\nu (u_\lambda-u_{0,\lambda}))|_\Gamma}{H^1(\Gamma)}\lesssim\norm{v}{H^{1/2}(\Gamma)}.
\end{align}
To deal with the conormal derivative, we apply again \eqref{eq:regular normal} together with the fact that $\widetilde {\mathfrak{V}}$ is a potential, i.e., $\mathfrak{P}\widetilde {\mathfrak{V}}=0$ weakly.
This leads to
\begin{align*}
\norm{\mathfrak{D}_\nu\widetilde {\mathfrak{V}}\mathfrak{D}_\nu (u_\lambda-u_{0,\lambda})}{L^2(\Gamma)}
\stackrel{\eqref{eq:regular normal}}\lesssim \norm{(\widetilde {\mathfrak{V}}\mathfrak{D}_\nu (u_\lambda-u_{0,\lambda}))|_\Gamma}{H^1(\Gamma)}+\norm{\widetilde {\mathfrak{V}}\mathfrak{D}_\nu (u_\lambda-u_{0,\lambda})}{H^1(\Omega)}.
\end{align*}
The first summand can be bounded as in \eqref{eq:Vtrace to v}.
For the second  summand, we use the stability~\eqref{eq:single layer potential} in combination with $\norm{\cdot}{H^{-1/2}(\Gamma)} \lesssim \norm{\cdot}{L^2(\Gamma)}$ and \eqref{eq:solution difference}.

Finally, it only remains to bound the trace as well as the conormal derivative of the second summand in \eqref{eq:V splitting}. 
Choosing $\sigma=-1$ in the additional regularity \eqref{eq:newton diff2}, one sees as in the proof of \cite[page~203]{mclean} (which proves stability of $\widetilde {\mathfrak{V}}:H^{-1/2}(\Gamma)^D\to H^1(\Omega)^D$) that  
\begin{align}\label{eq:V difference}
\widetilde{\mathfrak{V}}-\widetilde{\mathfrak{V}}_0:H^{-1/2}(\Gamma)^D\to H^2(\Omega)^D
\end{align}
With the assumption $(\cdot)|_\Gamma:H^{3/2}(\Omega)^D\to H^1(\Gamma)^D$, this implies that
\begin{align}\label{eq:last estimate}
\norm{((\widetilde{\mathfrak{V}}-\widetilde{\mathfrak{V}}_0)\mathfrak{D}_\nu u_{0,\lambda})|_\Gamma}{H^1(\Gamma)}
\lesssim\norm{(\widetilde{\mathfrak{V}}-\widetilde{\mathfrak{V}}_0)\mathfrak{D}_\nu u_{0,\lambda}}{H^{3/2}(\Omega)}
\lesssim \norm{\mathfrak{D}_\nu u_{0,\lambda}}{H^{-1/2}(\Gamma)}.
\end{align}
Recall that $(\mathfrak{P}_0+\lambda) u_{0,\lambda}=0$ with $u_{0,\lambda}|_\Gamma=v$. Thus, \cite[Theorem~4.25]{mclean} (which states boundedness of the Dirichlet to Neumann mapping) gives that
\begin{align}\label{eq:trace V diff}
\norm{((\widetilde{\mathfrak{V}}-\widetilde{\mathfrak{V}}_0)\mathfrak{D}_\nu u_{0,\lambda})|_\Gamma}{H^1(\Gamma)}
\stackrel{\eqref{eq:last estimate}}\lesssim \norm{\mathfrak{D}_\nu u_{0,\lambda}}{H^{-1/2}(\Gamma)}\lesssim\norm{v}{H^{1/2}(\Gamma)}.
\end{align}
Moreover, \eqref{eq:regular normal}, the fact that $\mathfrak{P} (\widetilde {\mathfrak{V}}-\widetilde{\mathfrak{V}}_0)=-\big(\sum_{i=1}^d b_i\partial_i \widetilde{\mathfrak{V}}_0\big) -c \widetilde{\mathfrak{V}}_0$, the stability \eqref{eq:single layer potential}, and \eqref{eq:trace V diff} show for the conormal derivative that
\begin{align*}
\norm{\mathfrak{D}_\nu(\widetilde{\mathfrak{V}}-\widetilde{\mathfrak{V}}_0)\mathfrak{D}_\nu u_{0,\lambda}}{L^2(\Gamma)}&\lesssim\norm{((\widetilde{\mathfrak{V}}-\widetilde{\mathfrak{V}}_0)\mathfrak{D}_\nu u_{0,\lambda})|_\Gamma}{H^1(\Gamma)}
+\norm{(\widetilde{\mathfrak{V}}-\widetilde{\mathfrak{V}}_0)\mathfrak{D}_\nu u_{0,\lambda}}{H^1(\Omega)}\\
&\quad+\norm{\mathfrak{P}(\widetilde{\mathfrak{V}}-\widetilde{\mathfrak{V}}_0)\mathfrak{D}_\nu u_{0,\lambda}}{L^2(\Omega)}\lesssim\norm{v}{H^{1/2}(\Gamma)}.
\end{align*} 
Overall, we have estimated the trace as well as the conormal derivative of all terms in \eqref{eq:K representation}.
Since $\mathfrak{K}-\mathfrak{K}_0=(\widetilde{\mathfrak{K}}-\widetilde{\mathfrak{K}}_0)(\cdot)|_{\Gamma}$ and $\mathfrak{W}-\mathfrak{W}_0=-\mathfrak{D}_\nu (\widetilde{\mathfrak{K}}-\widetilde{\mathfrak{K}}_0)$, this verifies the stability~\eqref{eq:differences} and thus concludes the proof.
\end{proof}

\section*{Acknowledgement} The authors acknowledge support through the Austrian Science Fund (FWF) under grant P29096, grant W1245, and J4379-N.

\bibliographystyle{alpha}
\bibliography{literature}

\newcommand{\etalchar}[1]{$^{#1}$}
\begin{thebibliography}{FFME{\etalchar{+}}14}

\bibitem[AFF{\etalchar{+}}13]{eps65}
Markus Aurada, Michael Feischl, Thomas F{\"u}hrer, Michael Karkulik, and Dirk
  Praetorius.
\newblock Efficiency and optimality of some weighted-residual error estimator
  for adaptive 2{D} boundary element methods.
\newblock {\em Comput. Methods Appl. Math.}, 13(3):305--332, 2013.

\bibitem[AFF{\etalchar{+}}17]{invest}
Markus Aurada, Michael Feischl, Thomas F{\"u}hrer, Michael Karkulik, J.~Melenk,
  and Dirk Praetorius.
\newblock Local inverse estimates for non-local boundary integral operators.
\newblock {\em Math. Comp.}, 86(308):2651--2686, 2017.

\bibitem[BBHP19]{helmholtzbem}
Alex Bespalov, Timo Betcke, Alexander Haberl, and Dirk Praetorius.
\newblock Adaptive {BEM} with optimal convergence rates for the {H}elmholtz
  equation.
\newblock {\em Comput. Methods Appl. Mech. Engrg.}, 346:260--287, 2019.

\bibitem[BDD04]{bdd}
Peter Binev, Wolfgang Dahmen, and Ron DeVore.
\newblock Adaptive finite element methods with convergence rates.
\newblock {\em Numer. Math.}, 97(2):219--268, 2004.

\bibitem[BHP17]{helmholtz}
Alex Bespalov, Alexander Haberl, and Dirk Praetorius.
\newblock Adaptive {FEM} with coarse initial mesh guarantees optimal
  convergence rates for compactly perturbed elliptic problems.
\newblock {\em Comput. Methods Appl. Mech. Engrg.}, 317:318--340, 2017.

\bibitem[Car97]{c97}
Carsten Carstensen.
\newblock An a posteriori error estimate for a first-kind integral equation.
\newblock {\em Math. Comp.}, 66(217):139--155, 1997.

\bibitem[CF01]{cf01}
Carsten Carstensen and Birgit Faermann.
\newblock Mathematical foundation of a posteriori error estimates and adaptive
  mesh-refining algorithms for boundary integral equations of the first kind.
\newblock {\em Eng. Anal. Bound. Elem.}, 25(7):497--509, 2001.

\bibitem[CFPP14]{axioms}
Carsten Carstensen, Michael Feischl, Marcus Page, and Dirk Praetorius.
\newblock Axioms of adaptivity.
\newblock {\em Comput. Math. Appl.}, 67(6):1195--1253, 2014.

\bibitem[CKNS08]{ckns}
J.~Manuel Cascon, Christian Kreuzer, Ricardo~H. Nochetto, and Kunibert~G.
  Siebert.
\newblock Quasi-optimal convergence rate for an adaptive finite element method.
\newblock {\em SIAM J. Numer. Anal.}, 46(5):2524--2550, 2008.

\bibitem[CMS01]{cms}
Carsten Carstensen, Matthias Maischak, and Ernst~P. Stephan.
\newblock A posteriori error estimate and {$h$}-adaptive algorithm on surfaces
  for {S}ymm's integral equation.
\newblock {\em Numer. Math.}, 90(2):197--213, 2001.

\bibitem[CP06]{averaging}
Carsten Carstensen and Dirk Praetorius.
\newblock Averaging techniques for the effective numerical solution of symm's
  integral equation of the first kind.
\newblock {\em SIAM J. Sci. Comput.}, 27(4):1226--1260, 2006.

\bibitem[CS96]{cs96}
Carsten Carstensen and Ernst~P. Stephan.
\newblock Adaptive boundary element mevthods for some first kind integral
  equations.
\newblock {\em SIAM J. Numer. Anal.}, 33(6):2166--2183, 1996.

\bibitem[DNPV12]{hitchhiker}
Eleonora Di~Nezza, Giampiero Palatucci, and Enrico Valdinoci.
\newblock Hitchhiker' s guide to the fractional {S}obolev spaces.
\newblock {\em Bull. Sci. Math.}, 136(5):521--573, 2012.

\bibitem[Fae00]{faer2}
Birgit Faermann.
\newblock Localization of the {A}ronszajn-{S}lobodeckij norm and application to
  adaptive boundary elements methods. {P}art {I}. {T}he two-dimensional case.
\newblock {\em IMA J. Numer. Anal.}, 20(2):203--234, 2000.

\bibitem[Fae02]{faer3}
Birgit Faermann.
\newblock Localization of the {A}ronszajn-{S}lobodeckij norm and application to
  adaptive boundary element methods. {P}art {II}. {T}he three-dimensional case.
\newblock {\em Numer. Math.}, 92(3):467--499, 2002.

\bibitem[FFK{\etalchar{+}}14]{part1}
Michael Feischl, Thomas F{\"u}hrer, Michael Karkulik, J.~Markus Melenk, and
  Dirk Praetorius.
\newblock Quasi-optimal convergence rates for adaptive boundary element methods
  with data approximation, {P}art {I}: {W}eakly-singular integral equation.
\newblock {\em Calcolo}, 51(4):531--562, 2014.

\bibitem[FFK{\etalchar{+}}15]{part2}
Michael Feischl, Thomas F{\"u}hrer, Michael Karkulik, J.~Markus Melenk, and
  Dirk Praetorius.
\newblock Quasi-optimal convergence rates for adaptive boundary element methods
  with data approximation, {P}art {II}: {H}yper-singular integral equation.
\newblock {\em Electron. Trans. Numer. Anal.}, 44:153--176, 2015.

\bibitem[FFME{\etalchar{+}}14]{faerconv}
Michael Feischl, Thomas F{\"u}hrer, Gregor Mitscha-Eibl, Dirk Praetorius, and
  Ernst~P. Stephan.
\newblock Convergence of adaptive {BEM} and adaptive {FEM-BEM} coupling for
  estimators without h-weighting factor.
\newblock {\em Comput. Meth. Appl. Math.}, 14(4):485--508, 2014.

\bibitem[FFP14]{ffp14}
Michael Feischl, Thomas F{\"u}hrer, and Dirk Praetorius.
\newblock Adaptive {FEM} with optimal convergence rates for a certain class of
  nonsymmetric and possibly nonlinear problems.
\newblock {\em SIAM J. Numer. Anal.}, 52(2):601--625, 2014.

\bibitem[FGHP16]{resigabem}
Michael Feischl, Gregor Gantner, Alexander Haberl, and Dirk Praetorius.
\newblock Adaptive 2{D} {I}{G}{A} boundary element methods.
\newblock {\em Eng. Anal. Bound. Elem.}, 62:141--153, 2016.

\bibitem[FGHP17]{resigaconv}
Michael Feischl, Gregor Gantner, Alexander Haberl, and Dirk Praetorius.
\newblock Optimal convergence for adaptive iga boundary element methods for
  weakly-singular integral equations.
\newblock {\em Numer. Math.}, 136(1):147--182, 2017.

\bibitem[FGP15]{fgp}
Michael Feischl, Gregor Gantner, and Dirk Praetorius.
\newblock Reliable and efficient a posteriori error estimation for adaptive
  {I}{G}{A} boundary element methods for weakly-singular integral equations.
\newblock {\em Comput. Methods Appl. Mech. Engrg.}, 290:362--386, 2015.

\bibitem[FKMP13]{fkmp}
Michael Feischl, Michael Karkulik, J.~Markus Melenk, and Dirk Praetorius.
\newblock Quasi-optimal convergence rate for an adaptive boundary element
  method.
\newblock {\em SIAM J. Numer. Anal.}, 51(2):1327--1348, 2013.

\bibitem[FLP08]{leite}
Samuel Ferraz-Leite and Dirk Praetorius.
\newblock Simple a posteriori error estimators for the h-version of the
  boundary element method.
\newblock {\em Computing}, 83(4):135--162, 2008.

\bibitem[Gan13]{gantumur}
Tsogtgerel Gantumur.
\newblock Adaptive boundary element methods with convergence rates.
\newblock {\em Numer. Math.}, 124(3):471--516, 2013.

\bibitem[Gan17]{diss}
Gregor Gantner.
\newblock {\em Optimal adaptivity for splines in finite and boundary element
  methods}.
\newblock PhD thesis, Institute for Analysis and Scientific Computing, TU Wien,
  2017.

\bibitem[GHP17]{igafem}
Gregor Gantner, Daniel Haberlik, and Dirk Praetorius.
\newblock Adaptive {IGAFEM} with optimal convergence rates: {H}ierarchical
  {B}-splines.
\newblock {\em Math. Models Methods Appl. Sci.}, 27(14):2631--2674, 2017.

\bibitem[GP19]{tigafem}
Gregor Gantner and Dirk Praetorius.
\newblock {Adaptive IGAFEM with optimal convergence rates: T-splines}.
\newblock {\em arXiv preprint}, 1910.01311, 2019.

\bibitem[GP20]{riga}
Gregor Gantner and Dirk Praetorius.
\newblock {Adaptive BEM for elliptic PDE systems, Part II: Isogeometric
  analysis for weakly-singular integral equations}.
\newblock {\em In preparation}, 2020.

\bibitem[GPS20]{hypiga}
Gregor Gantner, Dirk Praetorius, and Stefan Schimanko.
\newblock Adaptive isogeometric boundary element methods with local smoothness
  control.
\newblock {\em Math. Models Methods Appl. Sci.}, 30:261--307, 2020.

\bibitem[Hac95]{hackbusch}
Wolfgang Hackbusch.
\newblock {\em Integral equations: {T}heory and numerical treatment}.
\newblock Birkh{\"a}user, Basel, 1995.

\bibitem[HMT09]{mitrea}
Steve Hofmann, Marius Mitrea, and Michael Taylor.
\newblock Singular integrals and elliptic boundary problems on regular
  {S}emmes--{K}enig--{T}oro domains.
\newblock {\em Int. Math. Res. Not.}, 2010(14):2567--2865, 2009.

\bibitem[KPP13]{kpp13}
Michael Karkulik, David Pavlicek, and Dirk Praetorius.
\newblock On 2{D} newest vertex bisection: optimality of mesh-closure and
  {$H^1$}-stability of {$L_2$}-projection.
\newblock {\em Constr. Approx.}, 38(2):213--234, 2013.

\bibitem[McL00]{mclean}
William McLean.
\newblock {\em Strongly elliptic systems and boundary integral equations}.
\newblock Cambridge University Press, Cambridge, 2000.

\bibitem[ME14]{gme}
Gregor Mitscha-Eibl.
\newblock Adaptive {B}{E}{M} und {F}{E}{M}-{B}{E}{M}-{K}opplung f{\"u}r die
  {L}am\'e-{G}leichung.
\newblock Master's thesis, Institute for Analysis and Scientific Computing, TU
  Wien, 2014.

\bibitem[Mor08]{morrey}
Charles B.~Jr. Morrey.
\newblock {\em {Multiple integrals in the calculus of variations.}}
\newblock Springer, Berlin, reprint of the 1966 original edition, 2008.

\bibitem[MP15]{morgensternT1}
Philipp Morgenstern and Daniel Peterseim.
\newblock Analysis-suitable adaptive {T}-mesh refinement with linear
  complexity.
\newblock {\em Comput. Aided Geom. Design}, 34:50--66, 2015.

\bibitem[SS11]{ss}
Stefan~A. Sauter and Christoph Schwab.
\newblock {\em Boundary element methods}.
\newblock Springer, Berlin, 2011.

\bibitem[Ste07]{stevenson}
Rob Stevenson.
\newblock Optimality of a standard adaptive finite element method.
\newblock {\em Found. Comput. Math.}, 7(2):245--269, 2007.

\bibitem[Ste08a]{s}
Olaf Steinbach.
\newblock {\em Numerical approximation methods for elliptic boundary value
  problems}.
\newblock Springer, New York, 2008.

\bibitem[Ste08b]{stevenson08}
Rob Stevenson.
\newblock The completion of locally refined simplicial partitions created by
  bisection.
\newblock {\em Math. Comp.}, 77(261):227--241, 2008.

\bibitem[VGJS11]{juttler}
Anh-Vu Vuong, Carlotta Giannelli, Bert J{\"u}ttler, and Bernd Simeon.
\newblock A hierarchical approach to adaptive local refinement in isogeometric
  analysis.
\newblock {\em Comput. Methods Appl. Mech. Engrg.}, 200(49):3554--3567, 2011.

\end{thebibliography}

\end{document}